  \newtheorem{theorem}{Theorem}[section] %
  \newtheorem{proposition}[theorem]{Proposition} %
  \newtheorem{lemma}[theorem]{Lemma} %
  \newtheorem{corollary}[theorem]{Corollary} %
  \newtheorem{definition-theorem}[theorem]{Definition-Theorem}
\theoremstyle{definition} %
  \newtheorem{definition}[theorem]{Definition} %
  \newtheorem{example}[theorem]{Example} %
  \newtheorem{question}[theorem]{Question}
  \newtheorem{fact}[theorem]{Fact}
  \newtheorem{remark}[theorem]{Remark} %
\newcommand{\invHom}[3]{\operatorname{Hom}_{#1}(#2,#3)}
\numberwithin{equation}{section}
\numberwithin{equation}{section}
\numberwithin{table}{section}
\begin{document}
\title{Bounded multiplicity branching
\\
 for symmetric pairs
\\[2ex]
\textit{\normalsize
Dedicated to 
 Dr.\ Karl H.\ Hofmann
 with admiration 
 and heartfelt gratitude 
 for his contributions
 to mathematics
 and his devotion
 to the community
}
}
\author{Toshiyuki KOBAYASHI
\footnote{
Graduate School of Mathematical Sciences, 
The University of Tokyo, 
Japan.  
}}
\date{}

\maketitle

\setcounter{section}{0}

\begin{abstract}
We prove that any simply connected non-compact semisimple Lie group $G$ admits
 an infinite-dimensional irreducible representation $\Pi$ 
 with bounded multiplicity property of the restriction $\Pi|_{G'}$
 for {\it{all}} symmetric pairs $(G, G')$.
We also discuss which irreducible representations $\Pi$ satisfy
 the bounded multiplicity property.  
\end{abstract}

2020 MSC.  
Primary 22E46;
Secondary 22E45, 53C35, 32M15, 53C15.  

Key words and phrases:
 branching problem, 
 symmetric pair, 
 reductive group, 
 visible action, 
 spherical variety, 
 multiplicity, 
 minimal representation.

\section{Introduction}

We initiated in \cite{xkInvent94, K95} the general study
 of {\it{multiplicities}}
 in the branching problem of reductive groups, 
 and this article is a continuation
 of the work \cite{K14, K21, K22PJA, K22, tkVarnaMin, xktoshima}.  
The goal is to prove the following theorem.  
\begin{theorem}
\label{thm:mbrest}
Any simply connected non-compact semisimple Lie group $G$ admits
 an infinite-dimensional irreducible representation $\Pi$ 
 with the bounded multiplicity property of the restriction $\Pi|_{G'}$
 for all symmetric pairs $(G, G')$:
\begin{equation}
\label{eqn:bddrest}
 \underset{\pi \in \operatorname{Irr}(G')}\sup 
 [\Pi|_{G'}:\pi]<\infty.
\end{equation} 
\end{theorem}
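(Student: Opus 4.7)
My plan is to realise $\Pi$ as a suitable subquotient of a degenerate principal series $\operatorname{Ind}_P^G(\chi)$ attached to a carefully chosen parabolic subgroup $P \subset G$, and then invoke the geometric criterion for bounded multiplicity in branching developed in the cited papers \cite{K14, K21, K22PJA, K22, tkVarnaMin, xktoshima}. In its cleanest form, this criterion asserts that for $\Pi$ induced from $P$, the bound \eqref{eqn:bddrest} is controlled by the complex flag variety $G_{\mathbb{C}}/P_{\mathbb{C}}$ being $G_{\mathbb{C}}'$-spherical, meaning that a Borel subgroup of $G_{\mathbb{C}}'$ has an open orbit on $G_{\mathbb{C}}/P_{\mathbb{C}}$. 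Thus the analytic statement \eqref{eqn:bddrest} reduces to a purely algebro-geometric problem about flag varieties.

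Since $G$ is simply connected, I write $G = G_1 \times \cdots \times G_k$ as a product of simple factors and take $\Pi = \bigotimes \Pi_i$ accordingly. When the involution defining $(G, G')$ stabilises each factor, bounded multiplicity for each $\Pi_i|_{G_i'}$ yields bounded multiplicity for $\Pi|_{G'}$, reducing the theorem to the case of simple $G$; when the involution permutes isomorphic factors (the diagonal-type pairs), one uses Mackey-style decomposition to convert the branching problem to a tensor product estimate of the same geometric flavour.

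For each non-compact simple $G$, I would exhibit a single parabolic $P$ whose flag variety is $H_{\mathbb{C}}$-spherical for every complexified symmetric subgroup $H$ of $G$. Natural candidates are the parabolics with smallest generalised flag variety: projective spaces $\mathbb{CP}^{n-1}$ for type $A$; even-dimensional quadrics and isotropic Grassmannians for classical orthogonal and symplectic $G$; compact duals of Hermitian symmetric spaces whenever $G$ is of Hermitian type; the Cayley plane and the Freudenthal variety for $E_6$ and $E_7$; and, crucially, the Heisenberg parabolic for the non-Hermitian exceptional groups $G_2$, $F_4$ and $E_8$, in which case $\Pi$ becomes the minimal representation and $G_{\mathbb{C}}/P_{\mathbb{C}}$ is the projectivised closure of the minimal nilpotent orbit. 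The sphericity verification then amounts to cross-referencing the Berger classification of real symmetric pairs against the Kr\"amer--Brion--Mikityuk classification of spherical flag varieties of simple algebraic groups.

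The principal obstacle is precisely this uniformity: one must exhibit a single $P$ whose flag variety is spherical for \emph{every} symmetric subgroup of $G$ simultaneously, not one pair at a time. This compatibility across the whole list of involutions of $G$ is tightest for the non-Hermitian exceptional types, where only the very small geometry of the minimal nilpotent orbit appears rigid enough for the sphericity condition to survive every real involution, hence the decisive role played by the minimal representation in the construction.
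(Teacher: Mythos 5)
Your reduction to simple $G$ and your choices for type $A$ and the Hermitian cases are broadly consistent with the paper, but the core of your plan --- exhibiting, for every non-compact simple $G$, one parabolic $P$ such that $G_{\mathbb{C}}/P_{\mathbb{C}}$ is $G_{\mathbb{C}}'$-spherical for \emph{every} symmetric subgroup $G'$ --- cannot be carried out, and the failure is not a matter of cross-referencing classifications. Take $G$ the split real form of $G_2$ and $G'$ locally $SL(2,{\mathbb{R}})\times SL(2,{\mathbb{R}})$: every proper parabolic of $G_{2,{\mathbb{C}}}$ has flag variety of dimension at least $5$, while a Borel subgroup of ${\mathfrak{sl}}_2\oplus{\mathfrak{sl}}_2$ has dimension $4$, so no open Borel orbit can exist for any choice of $P$. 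The same count rules out $F_4^{\mathbb{C}}$ against ${\mathfrak{sp}}(6,{\mathbb{C}})\oplus{\mathfrak{sl}}(2,{\mathbb{C}})$ (all flag varieties have dimension $\ge 15$, a Borel of the subgroup has dimension $14$), which is exactly the symmetric pair $({\mathfrak{f}}_{4(-20)},{\mathfrak{sp}}(2,1)+{\mathfrak{sp}}(1))$ that the paper must handle. Passing to the minimal representation as a subquotient does not repair this: sphericity of $G_{\mathbb{C}}/P_{\mathbb{C}}$ in Theorem \ref{thm:introQsph2} governs the \emph{full} induced module, and a proper subquotient can enjoy \eqref{eqn:bddrest} even when the ambient degenerate principal series does not. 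What the paper actually uses for these groups is Theorem \ref{thm:Ki}: it suffices that the $G_{\mathbb{C}}'$-action on the associated variety ${\mathcal{V}}(\operatorname{Ann}\Pi)$ --- for the minimal representation, the closure of ${\mathbb{O}}_{\operatorname{min},{\mathbb{C}}}$ --- be \emph{coisotropic}, a strictly weaker condition than sphericity of the projectivized orbit, verified by an explicit slice argument (Fact \ref{fact:22030921}, Lemma \ref{lem:1.7}).

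Even with that repair, your proposal has no mechanism for ${\mathfrak{g}}={\mathfrak{sp}}(p,q)$ and ${\mathfrak{f}}_{4(-20)}$. There the complex minimal nilpotent orbit has no real points, so no minimal representation exists --- indeed no $\Pi\in\operatorname{Irr}(G)$ with $\operatorname{DIM}(\Pi)=n({\mathfrak{g}}_{\mathbb{C}})$ exists at all by \eqref{eqn:GKbdd} --- and these groups are neither Hermitian nor para-Hermitian, so none of the mechanisms you list applies. The paper must instead produce $A_{\mathfrak{q}}(\lambda)$-modules whose associated variety is the strictly larger orbit closure ${\mathbb{O}}_{\operatorname{min},{\mathbb{R}}}^{\mathbb{C}}$ (Lemmas \ref{lem:22092502} and \ref{lem:22093002}) and prove a new coisotropicity theorem for that orbit under the root-theoretic hypothesis $\sigma\mu=-\mu$ (Theorems \ref{thm:mgbdd} and \ref{thm:22040624}), checked for each of the remaining symmetric pairs. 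This final step is the hardest part of the proof and is entirely absent from your outline.
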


Let us explain some terminologies.  
We denote by $\operatorname{Irr}(G)$
 the set of irreducible objects
 in the category ${\mathcal{M}}(G)$
 of smooth admissible representations
 of a real reductive Lie group $G$
 of finite length with moderate growth, 
 which are defined on Fr{\'e}chet topological vector spaces
 \cite[Chap.~11]{WaI}.

Suppose that $G'$ is a reductive subgroup of $G$.  
For $\Pi \in {\mathcal{M}}(G)$, 
 the {\it{multiplicity}}
 of $\pi \in \operatorname{Irr}(G')$
 in the restriction $\Pi|_{G'}$ is defined by 
\begin{equation}
\label{eqn:dfn-mult}
 [\Pi|_{G'}:\pi]:=\dim_{\mathbb{C}} \invHom{G'}{\Pi|_{G'}}{\pi}
 \in {\mathbb{N}} \cup \{\infty\}, 
\end{equation}
where 
$\invHom{G'}{\Pi|_{G'}}{\pi}$ denotes the space 
 of {\it{symmetry breaking operators}}, 
 {\it{i.e.}},  
 continuous $G'$-homomorphisms
 between the Fr{\'e}chet representations.

By a {\it{symmetric pair}} $(G,G')$, 
 we mean 
 that $G'$ is an open subgroup of the fixed point group $G^{\sigma}$
 of an involutive automorphism $\sigma$
 of $G$.  
The Riemannian symmetric pair $(G,K)$
 with $\sigma$ being a Cartan involution $\theta$ and 
the {\it{group manifold case}}
 $({}^{\backprime}G \times {}^{\backprime}G, \operatorname{diag}{}^{\backprime}G)$
 are typical examples.  
The pair $(SL(n,{\mathbb{R}}), SO(p,q))$
 with $p+q=n$ is another example of symmetric pairs.  
The infinitesimal classification of irreducible symmetric pairs
 was accomplished by Berger \cite{Be57}.

Theorem \ref{thm:mbrest} may look quite surprising, 
 in view of the theorem \cite{K14}
 revealing that for \lq\lq{many}\rq\rq\ symmetric pairs $(G,G')$
 with $G'$ non-compact 
\begin{equation}
\label{eqn:infty}
\text{
 $[\Pi|_{G'}:\pi]=\infty$
 for some $\Pi \in \operatorname{Irr}(G)$
 and $\pi \in \operatorname{Irr}(G')$.  }
\end{equation}
See \cite{xKMt} for the classification of such symmetric pairs $(G,G')$.

We refer to \cite{xKVogan2015} and \cite[Sect.\ 2]{tkVarnaMin}
 for some motivation
 and perspectives of the general branching problems
 and the role of bounded multiplicity property.

The tensor product of two representations
 is a special case
 of the restriction
 with respect to a symmetric pair
 $(G \times G, \operatorname{diag}G)$.  
We also prove:

\begin{theorem}
\label{thm:tensor}
For any simply connected, non-compact semisimple Lie group $G$, 
 there exist infinite-dimensional representations $\Pi_1, \Pi_2 \in \operatorname{Irr}(G)$
 such that the tensor product representation
 has the bounded multiplicity property:
\begin{equation}
\label{eqn:bddtensor}
\underset{\Pi \in \operatorname{Irr}(G)}\sup 
  [{\Pi_1 \otimes \Pi_2}:{\Pi}]<\infty.  
\end{equation}
\end{theorem}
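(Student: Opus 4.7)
The strategy is to deduce Theorem \ref{thm:tensor} from Theorem \ref{thm:mbrest} applied to the doubled group. Set $\widetilde{G} := G \times G$; if $G$ is simply connected, non-compact, and semisimple, then so is $\widetilde{G}$. I would invoke Theorem \ref{thm:mbrest} to produce an infinite-dimensional $\widetilde{\Pi} \in \operatorname{Irr}(\widetilde{G})$ whose restriction $\widetilde{\Pi}|_{H}$ has bounded multiplicity for \emph{every} symmetric pair $(\widetilde{G}, H)$. The swap involution $\sigma \colon (x,y) \mapsto (y,x)$ of $\widetilde{G}$ has fixed-point set $\operatorname{diag} G$, so $(\widetilde{G}, \operatorname{diag} G)$ is one such symmetric pair, yielding
\[
 \sup_{\pi \in \operatorname{Irr}(G)} [\widetilde{\Pi}|_{\operatorname{diag} G} : \pi] < \infty.
\]

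To translate this estimate into a tensor product statement, I would use the standard fact that every irreducible smooth admissible Fr{\'e}chet representation of $\widetilde{G} = G \times G$ factors as an outer tensor product $\Pi_1 \boxtimes \Pi_2$ with $\Pi_i \in \operatorname{Irr}(G)$, and that its restriction to $\operatorname{diag} G$ coincides with the inner tensor product $\Pi_1 \otimes \Pi_2$ viewed as a $G$-representation. Writing $\widetilde{\Pi} = \Pi_1 \boxtimes \Pi_2$, the displayed inequality becomes precisely \eqref{eqn:bddtensor}.

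The main obstacle is ensuring that \emph{both} $\Pi_1$ and $\Pi_2$ are infinite-dimensional, whereas a priori Theorem \ref{thm:mbrest} only guarantees $\dim \widetilde{\Pi} = \infty$---which would be consistent with one factor being finite-dimensional, or even trivial. I expect to resolve this by inspecting the explicit construction supporting Theorem \ref{thm:mbrest}: the distinguished representations produced there are built from minimal or similarly small unitary representations selected by a visible action / real spherical geometry criterion (cf.\ the keywords of this paper), and this construction is compatible with direct products, so that applied to $\widetilde{G}$ it naturally yields $\Pi \boxtimes \Pi$ with $\Pi$ the distinguished representation of $G$ itself. Equivalently, one can argue directly that if $\Pi \in \operatorname{Irr}(G)$ satisfies the conclusion of Theorem \ref{thm:mbrest} for $G$, then $\Pi \boxtimes \Pi$ has bounded multiplicity upon restriction to $\operatorname{diag} G$, because the spherical-geometry criterion underlying the bounded multiplicity property is stable under the doubling $(G, G') \leadsto (G \times G, \operatorname{diag} G)$. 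In either approach one ends up with both tensor factors equal to the same infinite-dimensional $\Pi$, completing the plan.
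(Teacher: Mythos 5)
Your reduction of Theorem \ref{thm:tensor} to Theorem \ref{thm:mbrest} applied to $\widetilde{G}=G\times G$ does not close, and the obstacle you flag yourself is fatal rather than technical. Theorem \ref{thm:mbrest} applied to $\widetilde{G}$ only guarantees that \emph{some} infinite-dimensional irreducible $\widetilde{\Pi}=\Pi_1\boxtimes\Pi_2$ has bounded multiplicity for all symmetric pairs of $\widetilde{G}$; the representation $\Pi_1\boxtimes{\mathbf{1}}$ is a perfectly good witness whenever $\Pi_1$ witnesses Theorem \ref{thm:mbrest} for $G$ (its restriction to a swap-type symmetric subgroup $\{(g,\tau g)\}$ is just $\Pi_1$ twisted by $\tau$, and its restriction to a product-type symmetric subgroup $G_1'\times G_2'$ is $\Pi_1|_{G_1'}\boxtimes{\mathbf{1}}$), and it yields nothing toward \eqref{eqn:bddtensor}. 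Worse, the reduction is circular within the paper's own logic: Theorem \ref{thm:mbrest} for the \emph{non-simple} semisimple group $G\times G$ is itself obtained by reducing to simple factors, and the symmetric pairs of $G\times G$ given by factor-swapping involutions are exactly those whose treatment requires the tensor-product theorem for the simple factors. Invoking Theorem \ref{thm:mbrest} for $G\times G$ therefore presupposes Theorem \ref{thm:tensor} for $G$.

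Your proposed repair --- that the ``spherical-geometry criterion is stable under doubling,'' i.e.\ that $\Pi$ good for all symmetric pairs of $G$ forces $\Pi\boxtimes\Pi$ to be good for $(G\times G,\operatorname{diag}G)$ --- is precisely the assertion that has to be proved, and it is not a formal consequence of the restriction results. The criterion actually used (Theorem \ref{thm:Ki}) asks, for the tensor product, that $\operatorname{diag}(G_{\mathbb{C}})$ act coisotropically on the \emph{product} ${\mathcal{V}}(\operatorname{Ann}\Pi_1)\times{\mathcal{V}}(\operatorname{Ann}\Pi_2)$; this is a different symplectic manifold from ${\mathcal{V}}(\operatorname{Ann}\Pi)$, and coisotropicity there is a genuinely new geometric condition, not implied by coisotropicity of the actions of symmetric subgroups $G_{\mathbb{C}}'\subset G_{\mathbb{C}}$ on a single orbit closure. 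Accordingly, the paper proves the tensor-product half of each construction by a separate, parallel computation for the diagonal action on a product of two nilpotent (or $\mathfrak{p}_{\pm}$-type) orbits: Theorem \ref{thm:220928}~(2) for highest weight modules, Fact \ref{fact:22030921}~(2) feeding into Theorem \ref{thm:Joseph}~(2) for representations with $\operatorname{DIM}(\Pi)=n(\mathfrak{g}_{\mathbb{C}})$, Theorem \ref{thm:22100105} for $\mathfrak{sp}(p,q)$ and $\mathfrak{f}_{4(-20)}$, and the sphericity of the double flag variety in Theorem \ref{thm:fmtensor}~(iv) for the para-Hermitian case. That diagonal-action verification is the missing content of your proposal; supplying it, rather than citing Theorem \ref{thm:mbrest}, is what actually proves Theorem \ref{thm:tensor}.
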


When $\Pi$ is a unitary representation of $G$, 
 the restriction $\Pi|_{G'}$ decomposes
 into the direct integral of irreducible unitary representations
 of the subgroup $G'$:
\begin{equation}
\label{eqn:branch}
\Pi|_{G'} \simeq \int_{\widehat{G'}}^{\oplus} m_{\Pi}(\pi) \pi d \mu(\pi), 
\end{equation}
where $\widehat{G'}$ denotes the set of equivalence classes
 of irreducible unitary representations
 ({\it{unitary dual}})
 of $G'$ equipped with Fell topology, 
 $\mu$ is a Borel measure on $\widehat{G'}$, 
 and $m_{\Pi} \colon \widehat{G'} \to {\mathbb{N}}\cup \{\infty\}$
 is a measurable function.  
The irreducible decomposition \eqref{eqn:branch} is called the {\it{branching law}}
 of the restriction $\Pi|_{G'}$.  
By the theory of nuclear spaces, 
 the multiplicity in the category of admissible representations of moderate growth dominates the one
 in the category of unitary representations, 
 namely, 
 one has the inequality
\begin{equation}
\label{eqn:nuclear}
m_{\Pi}(\pi) \le [\Pi^{\infty}|_{G'}:\pi^{\infty}]
\quad
\text{a.e. $\pi \in \widehat {G'}$ with respect to $\mu$, }
\end{equation}
 where $\Pi^{\infty} \in \operatorname{Irr}(G)$
 and $\pi^{\infty} \in \operatorname{Irr}(G')$
 denote the Fr{\'e}chet representations
 of smooth vectors of $\Pi \in \widehat G$ and $\pi \in \widehat{G'}$, 
 respectively.  
Since we can take unitarizable representations
 in Theorems \ref{thm:mbrest} and \ref{thm:tensor}
 as the proof in Sections \ref{sec:4}--\ref{sec:7} below shows, 
 one has the following:

\begin{corollary}
\label{cor:unitary}
{\rm{(1)}}\enspace
Any simply connected non-compact semisimple Lie group $G$ admits 
 an infinite-dimensional irreducible unitary representation $\Pi$
 such that the branching law \eqref{eqn:branch} of the restriction 
 $\Pi|_{G'}$ satisfies the bounded multiplicity property
 for all symmetric pairs $(G,G')$:
there exists $C>0$ such that 
\[
  m_{\Pi}(\pi)\le C
\quad
\text{a.e. $\pi \in \widehat {G'}$ with respect to $\mu$.}
\]
\newline
{\rm{(2)}}\enspace
For any simple connected, non-compact semisimple Lie group $G$, 
 there exist infinite-dimensional irreducible unitary representations $\Pi_1$ and $\Pi_2$ 
 such that the tensor product representation decomposes into the direct integral\[
  \Pi_1 \otimes \Pi_2 \simeq \int_{\widehat G}^{\oplus}m_{\Pi_1, \Pi_2}(\Pi) d \mu(\Pi), 
\]
with the bounded multiplicity property:
there exists $C>0$ such that 
\[
  m_{\Pi_1, \Pi_2}(\Pi) \le C
\quad
\text{a.e.\ $\Pi \in \widehat{G}$ with respect to $\mu$.}
\]
\end{corollary}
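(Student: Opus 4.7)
The plan is to derive Corollary \ref{cor:unitary} directly from Theorems \ref{thm:mbrest} and \ref{thm:tensor} by combining the author's indication that the representations constructed in the forthcoming proof are unitarizable with the nuclear-space inequality \eqref{eqn:nuclear}. Once a uniform bound on the smooth-vector multiplicity is in hand and the ambient representation is known to carry a $G$-invariant Hilbert structure, the direct integral multiplicity is automatically bounded by the same constant. In other words, the corollary is essentially a packaging statement, and the substantive work lies entirely in the two theorems above.

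For part (1), I would take $\Pi \in \operatorname{Irr}(G)$ as produced by Theorem \ref{thm:mbrest} and let $\Pi_u \in \widehat{G}$ denote the irreducible unitary representation with $(\Pi_u)^{\infty} \simeq \Pi$. Setting $C := \sup_{\pi \in \operatorname{Irr}(G')} [\Pi|_{G'}:\pi]$, which is finite by \eqref{eqn:bddrest}, the restriction $\Pi_u|_{G'}$ decomposes as in \eqref{eqn:branch}, and \eqref{eqn:nuclear} yields
\[
m_{\Pi_u}(\pi) \le [(\Pi_u)^{\infty}|_{G'} : \pi^{\infty}] = [\Pi|_{G'} : \pi^{\infty}] \le C
\]
for $\mu$-a.e.\ $\pi \in \widehat{G'}$, uniformly across all symmetric pairs $(G,G')$. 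Part (2) is entirely analogous: one applies the same reasoning to the symmetric pair $(G \times G, \operatorname{diag} G)$ using unitary completions of the $\Pi_1, \Pi_2$ produced by Theorem \ref{thm:tensor}, noting that the Hilbert tensor product of two unitary representations is again unitary and that the inequality \eqref{eqn:nuclear} again upgrades the bound \eqref{eqn:bddtensor} from the smooth category to the direct integral decomposition.

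The main obstacle, as I see it, is not the corollary itself but the parenthetical assertion that the representations constructed in the proofs of Theorems \ref{thm:mbrest} and \ref{thm:tensor} are actually unitarizable. Given the keywords of the paper (minimal representation, spherical variety, visible action), I expect $\Pi$ to be realized as a minimal representation or a similarly rigid small unipotent representation, for which a $G$-invariant positive-definite Hermitian form is available on an explicit dense subspace. Once that realization is exhibited in Sections \ref{sec:4}--\ref{sec:7}, the present corollary is immediate from \eqref{eqn:nuclear}, and the argument above requires no further ingredients.
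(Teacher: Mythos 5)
Your proposal is correct and follows essentially the same route as the paper: the author derives Corollary \ref{cor:unitary} precisely by noting that the representations produced in Sections \ref{sec:4}--\ref{sec:7} (unitary highest weight modules, minimal representations, unitarizable $A_{\mathfrak q}(\lambda)$) are unitarizable and then invoking the nuclear-space inequality \eqref{eqn:nuclear} to transfer the bound \eqref{eqn:bddrest} (resp.\ \eqref{eqn:bddtensor}, via the pair $(G\times G,\operatorname{diag}G)$) to the direct integral multiplicities. Your identification of unitarizability as the only substantive input, settled by the explicit constructions, matches the paper's logic exactly.
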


These results concern
 with the {\it{restriction}}.  
On the other hand, 
 the multiplicity occurring 
 in the {\it{induction}} 
$\operatorname{Ind}_{G'}^G({\bf{1}}) \simeq C^{\infty}(G/G')$ 
 is finite for any reductive symmetric pair $(G,G')$, 
where $\bf{1}$ denotes the one-dimensional 
trivial representation of $G'$, 
 see van den Ban \cite{vdB}:
\begin{equation}
\label{eqn:fmind}
\dim_{\mathbb{C}} \invHom{G}{\Pi}{C^{\infty}(G/G')}
<\infty
\quad
\text{for every $\Pi \in \operatorname{Irr}(G)$}.  
\end{equation}
More generally, 
 it was proved in \cite[Thm.\ A]{xktoshima}
 that the finite multiplicity property
 \eqref{eqn:fmind}
 is characterized by the {\it{real sphericity}}
(see Section \ref{sec:spherical} for the definition).  
We note that any reductive symmetric space is real spherical.  
When $G/G'$ is a symmetric space, 
 one has a stronger estimate than \eqref{eqn:fmind}, 
 namely, 
 the following bounded multiplicity property holds:

\begin{equation}
\label{eqn:bddind}
\underset{\Pi \in \operatorname{Irr}(G)}{\sup} \dim_{\mathbb{C}} \invHom{G}{\Pi}{C^{\infty}(G/G')}
<\infty.  
\end{equation}

More broadly, 
 it was proved in \cite[Thm.\ B]{xktoshima}
that the bounded multiplicity property \eqref{eqn:bddind} is 
 characterized by the sphericity
 of the complexification $G_{\mathbb{C}}/G_{\mathbb{C}}'$.  
We note that $G_{\mathbb{C}}/G_{\mathbb{C}}'$ is spherical
 when $G/G'$ is a symmetric space.

By Frobenius reciprocity
 $\invHom{G}{\Pi}{C^{\infty}(G/G')} \simeq \invHom{G'}{\Pi|_{G'}}{\bf{1}}$, 
 the estimate \eqref{eqn:bddind} is equivalent to 
\[
\underset{\Pi \in \operatorname{Irr}(G)}\sup 
[\Pi|_{G'}:{\bf{1}}]<\infty, 
\]
 which may be compared with \eqref{eqn:bddrest} and \eqref{eqn:infty}.

We prove Theorems \ref{thm:mbrest} and \ref{thm:tensor}
 as well as Corollary \ref{cor:unitary}
 not merely
 as the existence theorem 
 but also by exhibiting explicitly 
 which $\Pi \in \operatorname{Irr}(G)$ satisfies
 the bounded multiplicity property \eqref{eqn:bddrest}
 for $(G,G')$
 in scope of further detailed analysis 
 ({\it{e.g., }} \lq\lq{Stages B and C}\rq\rq\
 in the branching program \cite{xKVogan2015}, 
 see Section \ref{sec:branch}).

The proof of Theorems \ref{thm:mbrest} and \ref{thm:tensor}
 is reduced to the case
 where ${\mathfrak{g}}$ is simple.  
We explore in more details 
 in the setting that $G$ satisfies one of the following:

\par\indent
$\bullet$\enspace
automorphism groups of Hermitian symmetric spaces
 (Section \ref{sec:4});
\par\indent
$\bullet$\enspace
automorphism groups of para-Hermitian symmetric spaces
 (Section \ref{sec:para});
\par\indent
$\bullet$\enspace
the complex minimal nilpotent orbit has real points
 (Section \ref{sec:min});
\par\indent
$\bullet$\enspace
the complex minimal nilpotent orbit has no real point
 (Section \ref{sec:7}).

Correspondingly, 
 we shall see the bounded multiplicity property holds
 for the restriction $\Pi|_{G'}$
 when $\Pi$ is a \lq\lq{geometric quantization}\rq\rq\ 
 of certain elliptic, hyperbolic, 
 or (real) minimal nilpotent coadjoint orbits, 
 see Theorem \ref{thm:mfB}, 
 Corollary \ref{cor:para}, 
 and Theorems \ref{thm:Joseph}
 and \ref{thm:mgbdd}, 
 respectively.

The paper is organized as follows.  
Section \ref{sec:2} 
 explains some basic notions
 and known results 
 as preliminaries, 
 and Sections \ref{sec:4}--\ref{sec:7}
 provide a family of irreducible representations $\Pi$ of $G$
 that satisfy the bounded multiplicity property \eqref{eqn:bddrest}
 of the restriction of $\Pi$.  
Theorems \ref{thm:mbrest} and \ref{thm:tensor}
  will be proved in Section \ref{sec:min}
 except for ${\mathfrak{g}}={\mathfrak{s p}}(p,q)$
 or ${\mathfrak{f}}_{4(-20)}$, 
 which will be treated
 in Section \ref{sec:7}.

\section{Preliminaries}
\label{sec:2}

In this section, 
 we explain some background, basic notions, 
 and known theorems
 in proving our main results.  

\subsection{Branching problems}
\label{sec:branch}

By branching problems in representation theory, 
 we mean the broad problem of understanding
 how irreducible representations
 of a group behave when restricted to a subgroup.  
As viewed in \cite{xKVogan2015}, 
 we may divide the branching problems
 into the following three stages:
\par\noindent
{\bf{Stage A.}}\enspace
Abstract features of the restriction;
\par\noindent
{\bf{Stage B.}}\enspace
Branching law;
\par\noindent
{\bf{Stage C.}}\enspace
Construction of symmetry breaking operators.  

The role of Stage A is to develop 
 a theory on the restriction of representations
 as generally
 as possible.  
In turn, 
 we may expect a detailed study of the restriction
 in Stages B (decomposition of representations)
 and C (decomposition of vectors)
 in the \lq\lq{promising}\rq\rq\ settings 
 that are suggested by the general theory 
 in Stage A.

Theorems \ref{thm:mbrest} and \ref{thm:tensor} answer a question
 in Stage A
 of branching problems.  
In turn, 
 we may expect a detailed analysis on the restriction $\Pi|_{G'}$
 in Stages B and C.    
See \cite{Clerc, K21, KOr, KOP, KS15, xksbonvec}
{\it{e.g.}}, 
 for some recent developments
 in Stage C
 in the setting 
 where the bounded multiplicity \eqref{eqn:bddrest} holds.

\subsection{Harish-Chandra's admissibility theorem}
\label{subsec:HC}

Harish-Chandra's admissibility theorem plays
 a fundamental role
 in the algebraic study
 of representations
 of real reductive linear Lie groups $G$, 
 which guarantees a finiteness property of multiplicities
 for the restriction $G \downarrow G'$
 if $G'$ is a maximal compact subgroup $K$ of $G$.  
That is, 
 one has the following:

\begin{fact}
[{\cite[Thm.\ 3.4.10]{WaI}}]
\label{fact:HC}
Let $G'=K$.  
For any irreducible unitary representation $\Pi$ of $G$, 
 one has
\begin{equation}
\label{eqn:fweak}
     [\Pi|_{G'}: \pi]<\infty
     \quad
     \text{for all $\pi \in \operatorname{Irr}(G')$}.  
\end{equation}
\end{fact}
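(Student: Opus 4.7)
The plan is to establish the stronger algebraic statement that the space $H_K$ of $K$-finite vectors in the Hilbert space $H$ of $\Pi$ is an admissible $(\mathfrak{g},K)$-module. The finite multiplicity \eqref{eqn:fweak} then follows: for finite-dimensional $\pi \in \operatorname{Irr}(K)$, the dimension of $\invHom{K}{\Pi}{\pi}$ is bounded by the multiplicity of $\pi$ in $H_K$.

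First I would verify that $H_K$ is an algebraically irreducible $(\mathfrak{g},K)$-module. Density of $H_K$ in $H$ comes from G{\aa}rding density of $H^{\infty}$ together with the Peter--Weyl decomposition under the compact group $K$; stability under $\mathfrak{g}$ uses that $K$-finite vectors in a continuous Hilbert representation of $G$ are automatically smooth (a theorem of Harish-Chandra). Algebraic irreducibility of $H_K$ then follows from topological irreducibility of $\Pi$, because the closure of any nonzero $(\mathfrak{g},K)$-submodule is a nonzero closed $G$-invariant subspace of $H$, and hence all of $H$.

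Next I would extract an infinitesimal character via Dixmier's generalization of Schur's lemma: on any irreducible module of countable dimension over the uncountable field $\mathbb{C}$, every commuting endomorphism acts by a scalar. Since $Z(\mathfrak{g})$ commutes with the $(\mathfrak{g},K)$-action on the countable-dimensional module $H_K$, this yields an infinitesimal character $\chi$ of $H_K$.

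The decisive step is to deduce finite-dimensionality of each $K$-isotypic component $H_K(\tau)$ from the existence of $\chi$. The engine I would use is the finite-generation theorem of Lepowsky (building on Kostant's separation of variables): $U(\mathfrak{g})^K$ is finitely generated as a module over $Z(\mathfrak{g}) \otimes U(\mathfrak{k})^K$. Combined with algebraic irreducibility from the first step and the scalar action of $Z(\mathfrak{g})$ from the second, a counting argument then bounds $\dim H_K(\tau)$ by a quantity depending only on $\tau$. This is the main obstacle: the key input is invariant-theoretic; cleaner bypasses via Harish-Chandra's original subquotient theorem, or via spherical functions followed by tensoring with finite-dimensional representations, are possible but technically more involved.
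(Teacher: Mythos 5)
The paper does not prove this statement: it is quoted as a known Fact with the reference [Wallach, \textit{Real Reductive Groups} I, Thm.\ 3.4.10], so there is no in-paper argument to compare against. Your outline is the standard proof of Harish-Chandra's admissibility theorem and is essentially the route taken in the cited source: irreducibility of the $(\mathfrak{g},K)$-module of $K$-finite smooth vectors, an infinitesimal character via Dixmier's version of Schur's lemma (legitimate here because $H$ is separable, so the module has countable dimension), and finite-dimensionality of each $K$-isotypic component from the finiteness of $U(\mathfrak{g})^{K}$ over $Z(\mathfrak{g})\otimes U(\mathfrak{k})^{K}$ together with the fact that $U(\mathfrak{k})^{K}$ acts by scalars on each isotypic component $H(\tau)\simeq \tau\otimes\operatorname{Hom}_K(\tau,H)$ and that $\operatorname{Hom}_K(\tau,H_K)$ is irreducible over $U(\mathfrak{g})^{K}$. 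That architecture is sound.

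One step as you state it is circular: the claim that $K$-finite vectors in a continuous Hilbert representation are ``automatically smooth'' is not available at this stage --- Harish-Chandra's automatic smoothness (indeed analyticity) of $K$-finite vectors requires the vector to be $Z(\mathfrak{g})$-finite, or the representation to be already known admissible, which is what you are trying to prove. Mere $K$-finiteness does not imply smoothness for a general unitary representation. The standard repair is to work throughout with $H_K^{\infty}:=H^{\infty}\cap H_K$, the $K$-finite G{\aa}rding vectors: this space is dense (project $H^{\infty}$ onto $K$-isotypic components), visibly stable under $\mathfrak{g}$ and $K$, and its nonzero $(\mathfrak{g},K)$-submodules have $G$-invariant closures by the usual ODE/analytic-vector argument, so the rest of your proof goes through verbatim with $H_K^{\infty}$ in place of $H_K$; only after admissibility is established does one conclude $H_K^{\infty}=H_K$. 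With that substitution the proposal is a correct proof sketch, modulo the substantial invariant-theoretic input you already identify as the main obstacle.
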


We explain two directions for generalizations
 of Fact \ref{fact:HC}.

One is to highlight
 {\it{$G'$-admissible restriction}} (Definition \ref{def:adm}), namely, 
 {\it{discrete decomposability}}
 as well as finite multiplicity property, 
 see Fact \ref{fact:Kadm} below.  
The other direction is to focus on the finiteness property
 of the multiplicity, 
 as we shall treat
 in Fact \ref{fact:KO} (1).

\subsection{Discretely decomposable restrictions}
\label{subsec:adm}
The notion and the results of this section will be used
 in Sections \ref{subsec:pfmfB} and \ref{subsec:smallf4}
 for the proof of the bounded multiplicity results.  

\begin{definition}
[{\cite[Sect.\ 1]{xkInvent94}}]
\label{def:adm}
A unitary representation $\Pi$ of $G$
 is {\it{$G'$-admissible}}
 if the restriction $\Pi|_{G'}$ splits into a direct sum 
 of irreducible unitary representations of $G'$:
\begin{equation}
\label{eqn:hwbdd}
\Pi|_{G'} \simeq {\sum_{\pi \in \widehat{G'}}}^{\oplus} m_{\Pi}(\pi)\pi, 
\end{equation}
with multiplicity $m_{\Pi}(\pi)<\infty$
 for all $\pi \in\widehat{G'}$.  
\end{definition}

Fact \ref{fact:HC} tells us that any $\Pi \in \widehat G$ is $K$-admissible.  
We begin with the case
 where $G'$ is compact
 but is not necessarily a maximal compact subgroup $K$.  
In this case, 
 discrete decomposability is obvious
 because $G'$ is compact, 
 and the finiteness of $m_{\Pi}(\pi)$ is non-trivial.  
We review a necessary and sufficient condition 
 for \eqref{eqn:fweak}
 when $G'$ is compact.  
In the following statement, 
 we use the letter $K'$
 instead of $G'$
 to emphasize that $K'$ is compact.

\begin{fact}
[{\cite{xkAnn98, K21Kostant}}]
\label{fact:Kadm}
Suppose that $K'$ is a subgroup
 of $K$.  
Let $\Pi \in {\mathcal{M}}(G)$.  
Then the following two conditions on the triple $(G, K', \Pi)$
 are equivalent:
\par\noindent
{\rm{(i)}}\enspace
The finite multiplicity property \eqref{eqn:fweak} holds.  
\par\noindent
{\rm{(ii)}}\enspace
$\operatorname{AS}_K(\Pi) \cap C_K(K') =\{0\}$.  
\end{fact}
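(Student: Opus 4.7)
The plan is to reduce the problem to the level of $K$-type decompositions and exploit the polyhedral geometry of branching $K \downarrow K'$. Since $\Pi \in {\mathcal{M}}(G)$ has finite length and is admissible, its underlying $({\mathfrak{g}},K)$-module decomposes as $\bigoplus_{\mu \in \widehat{K}} m(\mu)\,\mu$ with each $K$-multiplicity $m(\mu)$ finite. Because $K' \subset K$ is compact, the further restriction to $K'$ is automatically discrete and one has
\[
  [\Pi|_{K'}:\pi]
  \;=\;
  \sum_{\mu \in \operatorname{Supp}_K(\Pi)} m(\mu)\, [\mu|_{K'}:\pi]
  \quad\text{for every }\pi \in \widehat{K'}.
\]
Hence condition (i) is equivalent to the finiteness of the set
$S(\pi) := \{\mu \in \operatorname{Supp}_K(\Pi) : [\mu|_{K'}:\pi] \neq 0\}$
for every $\pi \in \widehat{K'}$.

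For the implication (ii) $\Rightarrow$ (i), I would fix $\pi \in \widehat{K'}$ and invoke the geometric description of classical $K \downarrow K'$ branching, e.g.\ via Heckman's asymptotic formula, or equivalently the $K'$-moment image of the coadjoint orbit $K \cdot \mu$: after subtracting the highest weight of $\pi$, every element of $S(\pi)$ lies in a bounded perturbation of the cone $C_K(K')$. Equivalently, the asymptotic cone of $S(\pi)$ is contained in $C_K(K')$ independently of $\pi$. Since $\operatorname{Supp}_K(\Pi)$ is a discrete subset of the weight lattice with asymptotic cone $\operatorname{AS}_K(\Pi)$, the hypothesis $\operatorname{AS}_K(\Pi) \cap C_K(K') = \{0\}$ then forces $S(\pi)$ to be bounded, hence finite.

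For the converse (i) $\Rightarrow$ (ii), I argue by contrapositive. If $\operatorname{AS}_K(\Pi) \cap C_K(K')$ contains a nonzero ray, extract a sequence $\mu_n \in \operatorname{Supp}_K(\Pi)$ with $|\mu_n| \to \infty$ and $\mu_n/|\mu_n|$ converging into $C_K(K')$. By the same branching geometry, one can locate a fixed $\pi \in \widehat{K'}$ such that $[\mu_n|_{K'}:\pi] \neq 0$ for infinitely many $n$ — intuitively, the stable asymptotic direction guarantees that the branching polytopes of the $\mu_n$'s share a common lowest $K'$-type $\pi$. This yields $[\Pi|_{K'}:\pi] = \infty$, contradicting (i).

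The main obstacle is the polyhedral-cum-moment-map lemma that identifies $C_K(K')$ as the universal asymptotic shape of the $S(\pi)$'s. A clean route is to use that the $K'$-moment image of the coadjoint orbit $K \cdot \mu$ is a convex polytope whose rescaling by $|\mu|$ stabilizes as $\mu \to \infty$ in a given direction, and the limiting polytope is a translate of $C_K(K')$ depending only on the direction. An alternative, more algebraic, route goes through the PBW filtration on a generator of the isotypic component, combined with a direct weight count for $\mathfrak{k}/\mathfrak{k}'$ acting on highest-weight vectors. Once this geometric lemma is in place, both implications reduce to elementary statements about discrete subsets of rational polyhedral cones and their behavior at infinity.
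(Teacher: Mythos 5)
First, a remark on status: the paper does not prove Fact~\ref{fact:Kadm}; it quotes it from \cite{xkAnn98, K21Kostant} and records that (ii)~$\Rightarrow$~(i) has two known proofs (via the wave front set of the $K$-character, and via symplectic geometry) while (i)~$\Rightarrow$~(ii) is proved in \cite{K21Kostant}. Your reduction to $K$-types and your argument for (ii)~$\Rightarrow$~(i) are essentially the symplectic-geometric route: the key lemma that $\{\mu\in\widehat K : [\mu|_{K'}:\pi]\neq 0\}$ lies within a bounded (depending on $\pi$) neighborhood of the momentum set $C_K(K')$ is correct (it follows from Heckman/Guillemin--Sternberg type results together with Kostant's description of the moment image $\operatorname{Ad}^*(K)(\mathfrak{k}')^{\perp}$), and combining it with $\operatorname{AS}_K(\Pi)\cap C_K(K')=\{0\}$ by the standard two-closed-cones compactness argument does give finiteness of each $S(\pi)$. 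That half is sound.

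The converse (i)~$\Rightarrow$~(ii) as you sketch it has a genuine gap, and it is exactly where the $G$-module structure of $\Pi$ must enter. Knowing only that $\mu_n\in\operatorname{Supp}_K(\Pi)$ with $|\mu_n|\to\infty$ and $\mu_n/|\mu_n|$ converging to a nonzero direction in $C_K(K')$ does \emph{not} produce a single $\pi\in\widehat{K'}$ occurring in infinitely many $\mu_n|_{K'}$. Concretely, take $K=SU(2)\times SU(2)$, $K'=\operatorname{diag}SU(2)$, so that $C_K(K')$ meets the dominant chamber in the diagonal ray $\{(a,a):a\ge 0\}$. An abstract $K$-module with $K$-support $\{(n,n+c_n)\}_{n}$, where $c_n\to\infty$ but $c_n/n\to 0$, has asymptotic cone equal to that diagonal ray, so (ii) fails; yet by Clebsch--Gordan the smallest $K'$-type of $(n,n+c_n)$ is $c_n\to\infty$, so every fixed $\pi$ occurs in only finitely many $K$-types and (i) holds. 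Thus the equivalence is simply false at the level of $K$-modules, and your contrapositive step (``the stable asymptotic direction guarantees a common lowest $K'$-type'') cannot be repaired by branching geometry alone: one must show that for $\Pi\in\mathcal{M}(G)$ a nonzero ray in $\operatorname{AS}_K(\Pi)\cap C_K(K')$ can be approximated by $K$-types staying within \emph{bounded distance} of a ray in $C_K(K')$ (not merely converging in direction), which is a structural fact about $K$-supports of finite-length Harish-Chandra modules and is the actual content of the proof in \cite{K21Kostant}. Your proposal never uses that $\Pi$ is a $G$-representation beyond admissibility of the $K$-multiplicities, so this implication is not established.
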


Here $\operatorname{AS}_K(\Pi)$ is the asymptotic $K$-support
 of $\Pi$.  
There are only finitely many possibilities
 of asymptotic $K$-supports $\operatorname{AS}_K(\Pi)$
 for $\Pi \in {\mathcal{M}}(G)$.  
The closed cone $C_K(K')$ is the momentum set
 for the Hamiltonian action 
 on the cotangent bundle $T^{\ast}(K/K')$.  
There are two proofs
 for the implication (ii) $\Rightarrow$ (i):
 by using the singularity spectrum
 (or the wave front set) of the character 
 \cite{xkAnn98}
 and by using symplectic geometry \cite{K21Kostant}.  
The proof for the implication (i) $\Rightarrow$ (ii) is given
 in \cite{K21Kostant}.

Fact \ref{fact:Kadm}  plays
 a crucial role
 in the study of discretely decomposable restriction
 with respect to {\it{non-compact}} reductive subgroups $G'$
 \cite{xkInvent94, xkAnn98, xkInvent98, K21Kostant}.

\begin{proposition}
[{\cite[Thm.\ 4.5]{xKVogan2015}}]
\label{prop:adm}
Let $G \supset G'$ be a pair of real reductive Lie groups, 
 and $K \supset K'$ maximal compact subgroups
 modulo centers.  
For an irreducible unitary representation $\Pi$ of $G$, 
 we denote by $\Pi^{\infty} \in \operatorname{Irr}(G)$
 the Fr{\'e}chet representation of smooth vectors, 
 and by $\Pi_K$ the underlying $({\mathfrak{g}}, K)$-module.  
If one of the equivalent conditions
 in Fact \ref{fact:Kadm} holds, 
 then the restriction $\Pi|_{G'}$ is $G'$-admissible.  

Moreover, 
 the multiplicity 
$
   m_{\Pi}(\pi)=\dim_{\mathbb{C}}\invHom{G'}{\pi}{\Pi|_{G'}}
$ 
 of the discrete spectrum is finite, 
 and satisfies the following equalities.  
\begin{equation}
\label{eqn:multgk}
m_{\Pi}(\pi) = [\Pi^{\infty}|_{G'}:\pi^{\infty}]
=\dim_{\mathbb{C}} \invHom{{\mathfrak{g}}',K'}{\Pi_K}{\pi_{K'}}.  
\end{equation}
\end{proposition}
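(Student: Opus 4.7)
The plan is to reduce to the algebraic level. By Fact \ref{fact:Kadm}, condition (i) forces the restriction $\Pi|_{K'}$ to have finite $K'$-multiplicities; since $K'$ is compact, this means that $\Pi$ is $K'$-admissible in the sense of Definition \ref{def:adm}. The remaining task is to upgrade this compact-subgroup admissibility to $G'$-admissibility and to identify the various Hom spaces computing the multiplicity.

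For the $G'$-admissibility, I would work with the underlying Harish-Chandra module $\Pi_K$ and its $K'$-isotypic decomposition $\Pi_K = \bigoplus_{\tau \in \widehat{K'}} \Pi_K[\tau]$, each summand being finite-dimensional by $K'$-admissibility and hence automatically $Z({\mathfrak{g}}'_{\mathbb{C}})$-finite. Since $\Pi_K$ is thus generated, as a $({\mathfrak{g}}',K')$-module, by $Z({\mathfrak{g}}'_{\mathbb{C}})$-finite vectors of finite $K'$-type, the standard algebraic machinery from the theory of discretely decomposable restrictions (\cite{xkInvent94, xkInvent98}) yields a decomposition of $\Pi_K$ into an algebraic direct sum of irreducible admissible $({\mathfrak{g}}', K')$-modules with finite multiplicities. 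The passage from this algebraic decomposition back to a Hilbert direct sum \eqref{eqn:hwbdd} is then routine: each $({\mathfrak{g}}', K')$-summand inherits a pre-Hilbert structure from $\Pi$, integrates to a unique irreducible unitary $G'$-representation, and orthogonal completion assembles these summands into the unitary decomposition of $\Pi|_{G'}$.

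For the multiplicity identities, the equality $[\Pi^{\infty}|_{G'}:\pi^{\infty}] = \dim_{\mathbb{C}} \invHom{{\mathfrak{g}}',K'}{\Pi_K}{\pi_{K'}}$ is an instance of the Casselman--Wallach equivalence between admissible smooth Fr\'echet representations of moderate growth and admissible $({\mathfrak{g}}', K')$-modules, which transports Hom to Hom. For the equality $m_{\Pi}(\pi) = [\Pi^{\infty}|_{G'}:\pi^{\infty}]$, the inequality $\le$ is precisely \eqref{eqn:nuclear}; the reverse follows from the algebraic direct sum obtained in the previous step, which combined with Schur's lemma for admissible $({\mathfrak{g}}', K')$-modules yields $\dim_{\mathbb{C}} \invHom{{\mathfrak{g}}',K'}{\Pi_K}{\pi_{K'}} = m_{\Pi}(\pi)$. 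The main obstacle is the second paragraph --- extracting the algebraic discrete decomposition of $\Pi_K$ from finite $K'$-multiplicity --- which is the technical heart of the theory of admissible restrictions.
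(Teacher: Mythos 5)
The paper gives no proof of Proposition \ref{prop:adm}: it is quoted from \cite[Thm.\ 4.5]{xKVogan2015}, whose proof is precisely the admissibility theory you invoke (\cite{xkInvent94, xkAnn98, xkInvent98}) --- $K'$-admissibility of $\Pi_K$, algebraic discrete decomposition as a $({\mathfrak{g}}',K')$-module, closure and orthocomplementation to obtain the Hilbert-space decomposition, and identification of the three multiplicities. So your route is essentially the standard one and the outline is sound, including the correct observation that the finite-dimensional $K'$-isotypic components are automatically $Z({\mathfrak{g}}'_{\mathbb{C}})$-finite and that the technical weight sits in the algebraic discrete decomposability.

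One step is misattributed. The equality $[\Pi^{\infty}|_{G'}:\pi^{\infty}]=\dim_{\mathbb{C}}\invHom{{\mathfrak{g}}',K'}{\Pi_K}{\pi_{K'}}$ is not a direct instance of the Casselman--Wallach equivalence, because $\Pi^{\infty}|_{G'}$ is not an admissible $G'$-representation of finite length (it is an infinite discrete sum), so it lies outside the category in which that equivalence transports $\operatorname{Hom}$ to $\operatorname{Hom}$. The correct argument uses the algebraic decomposition you establish in your second paragraph: restriction to the dense $({\mathfrak{g}}',K')$-submodule $\Pi_K\subset\Pi^{\infty}$ (whose vectors are $K$-finite, hence $K'$-finite and smooth) injects $\invHom{G'}{\Pi^{\infty}|_{G'}}{\pi^{\infty}}$ into $\invHom{{\mathfrak{g}}',K'}{\Pi_K}{\pi_{K'}}$, while every element of the latter factors through finitely many algebraic summands and extends continuously via the orthogonal projections of the unitary decomposition; both dimensions then equal $m_{\Pi}(\pi)$. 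With that repair your proposal matches the cited proof.
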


\begin{remark}
(1)\enspace
The bounded multiplicity property \eqref{eqn:bddrest} does not hold in general
 even for the case $G'=K$. 
We shall see in Theorem \ref{thm:mgbdd}
 that \eqref{eqn:bddrest} holds
 if $\Pi \in \operatorname{Irr}(G)$ is \lq\lq{small}\rq\rq\ in the sense
 that the Gelfand--Kirillov dimension of $\Pi$ equals
 half the dimension of a {\it{real}} minimal coadjoint orbit. 
\newline
(2)\enspace
The first equality in \eqref{eqn:multgk} is not true in general 
 when there is continuous spectrum
 in the restriction $\Pi|_{G'}$.  
\newline
(3)\enspace(\cite[Ex.\ 6.3]{mf-korea})\enspace
The multiplicity $m_{\Pi}(\pi)$ of discrete spectrum
may be infinite 
 even for reductive symmetric pairs $(G,G')$
 if one of the equivalent conditions in Fact \ref{fact:Kadm} fails.  
\end{remark}
See \cite{xdgv, decoAq, KO15} for a classification theory
 of the triple $(G,G',\Pi)$ 
 satisfying the equivalent conditions
 in Fact \ref{fact:Kadm}.

\subsection{Spherical spaces and real spherical spaces}
\label{sec:spherical}

In \cite{K14} and \cite[Thms.~C and D]{xktoshima}
 we proved the following geometric criteria
 that concern {\it{all}} $\Pi \in {\operatorname{Irr}}(G)$
 and {\it{all}} $\pi \in {\operatorname{Irr}}(G')$:
\begin{fact}
\label{fact:KO}
Let $G \supset G'$ be a pair of real reductive algebraic Lie groups.  
\par\noindent
{\rm{(1)}}\enspace
{\bf{Finite multiplicity}} for a pair $(G,G')$:\enspace
\[
  [\Pi|_{G'}:\pi]<\infty, 
\quad
 {}^{\forall}\Pi \in \operatorname{Irr}(G), 
 {}^{\forall}\pi \in \operatorname{Irr}(G')
\]
if and only if $(G \times G')/\operatorname{diag}G'$ is real spherical.  
\par\noindent
{\rm{(2)}}\enspace
{\bf{Bounded multiplicity}} for a pair $(G,G')$: \enspace
\begin{equation}
\label{eqn:BB}
  \underset{\Pi \in \operatorname{Irr}(G)}\sup\,\,
  \underset{\pi \in \operatorname{Irr}(G')}\sup\,\,
  [\Pi|_{G'}:\pi]<\infty
\end{equation}
if and only if $(G_{\mathbb{C}} \times G_{\mathbb{C}}')/\operatorname{diag} G_{\mathbb{C}}'$ is spherical.   

\end{fact}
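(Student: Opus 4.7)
The plan is to reduce the branching problem for $G\downarrow G'$ to an induction problem for the reductive pair $(\widetilde G,\widetilde G'):=(G\times G',\,\operatorname{diag}G')$, and then invoke the geometric characterizations of finite and bounded multiplicity for $C^{\infty}(\widetilde G/\widetilde G')$ already recorded in the excerpt (the generalizations of \eqref{eqn:fmind} and \eqref{eqn:bddind} to arbitrary reductive pairs from \cite{xktoshima}).

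First I would observe that every irreducible object $\widetilde\Pi\in\operatorname{Irr}(\widetilde G)$ is an outer tensor product $\widetilde\Pi=\Pi\boxtimes\tau$ with $\Pi\in\operatorname{Irr}(G)$ and $\tau\in\operatorname{Irr}(G')$, which is classical for smooth admissible representations of finite length on nuclear Fr\'echet spaces. Its restriction to $\operatorname{diag}G'$ is $\Pi|_{G'}\otimes\tau$, so for any $\pi\in\operatorname{Irr}(G')$, taking $\tau=\pi^{\vee}$ gives
\[
\invHom{G'}{\Pi|_{G'}}{\pi}\;\simeq\;\invHom{\operatorname{diag}G'}{\Pi\boxtimes\pi^{\vee}}{\mathbb{C}}.
\]
Combined with smooth Frobenius reciprocity for the closed reductive subgroup $\widetilde G'\subset\widetilde G$, this becomes
\[
[\Pi|_{G'}:\pi]\;=\;\dim_{\mathbb{C}}\invHom{\widetilde G}{\Pi\boxtimes\pi^{\vee}}{C^{\infty}(\widetilde G/\widetilde G')},
\]
identifying a branching multiplicity with an induction multiplicity.

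Next, taking the supremum over $(\Pi,\pi)\in\operatorname{Irr}(G)\times\operatorname{Irr}(G')$ corresponds to the supremum over $\widetilde\Pi\in\operatorname{Irr}(\widetilde G)$ on the right-hand side, since the assignment $(\Pi,\pi)\mapsto \Pi\boxtimes\pi^{\vee}$ is a bijection onto $\operatorname{Irr}(\widetilde G)$. Applying the finite-multiplicity criterion for $\operatorname{Ind}_{\widetilde G'}^{\widetilde G}({\bf 1})\simeq C^{\infty}(\widetilde G/\widetilde G')$ then yields part (1): $[\Pi|_{G'}:\pi]<\infty$ for all $(\Pi,\pi)$ if and only if $(G\times G')/\operatorname{diag}G'$ is real spherical. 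Applying the sharper bounded-multiplicity criterion yields part (2): \eqref{eqn:BB} holds if and only if the complexification $(G_{\mathbb{C}}\times G'_{\mathbb{C}})/\operatorname{diag}G'_{\mathbb{C}}$ is spherical.

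The main obstacle is the smooth Frobenius reciprocity step: one must ensure it is a topological isomorphism (not merely an injection) in the category ${\mathcal M}(\widetilde G)$ of smooth admissible Fr\'echet representations of moderate growth, where $\mathbb{C}$ appears as a continuous quotient of $C^{\infty}(\widetilde G/\widetilde G')$. This rests on nuclearity of the ambient function spaces together with a Casselman--Wallach-type globalization argument for the outer tensor product. A secondary subtlety is checking that the criteria of \cite{xktoshima} apply here: they do, since $\operatorname{diag}G'\subset G\times G'$ is a reductive (though generally non-symmetric) subgroup, which is precisely the level of generality at which Theorems A and B of \cite{xktoshima} are formulated.
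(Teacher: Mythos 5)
Your reduction of the branching multiplicity to an induction multiplicity for the pair $(G\times G',\operatorname{diag}G')$, followed by an appeal to the real-sphericity and sphericity criteria for $C^{\infty}(\widetilde G/\widetilde G')$, is exactly how Theorems C and D are deduced from Theorems A and B in \cite{xktoshima}; the paper itself quotes the statement as a known Fact and locates all the analytic content (hyperfunction boundary maps, generalized Poisson transforms) inside those induction criteria, precisely as you do. So your argument is correct and coincides in substance with the source's proof, with the Frobenius reciprocity and outer-tensor-product factorization steps being the standard Casselman--Wallach bookkeeping you already flag.
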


Here a complex $G_{\mathbb{C}}$-manifold $X$
 is called {\it{spherical}}
 if a Borel subgroup of $G_{\mathbb{C}}$
 has an open orbit in $X$, 
 and that a $G$-manifold $Y$ is called
 {\it{real spherical}} (\cite{K95})
 if a minimal parabolic subgroup of $G$
 has an open orbit in $Y$.

A remarkable discovery in \cite{xktoshima} includes 
 that the bounded multiplicity property \eqref{eqn:BB} is determined
 only by the complexified Lie algebras
 ${\mathfrak{g}}_{\mathbb{C}}$ and ${\mathfrak{g}}_{\mathbb{C}}'$.  
In particular, 
 the classification
 of such pairs $(G,G')$
 is quite simple, 
 because it is reduced to a classical result 
 when $G$ is compact \cite{xkramer}:
the pair $({\mathfrak{g}}_{\mathbb{C}}, {\mathfrak{g}}_{\mathbb{C}}')$ is the direct sum of the following ones
up to abelian ideals:
\begin{equation}
\label{eqn:BBlist}
({\mathfrak{sl}}_n, {\mathfrak{gl}}_{n-1}), 
({\mathfrak{so}}_{n}, {\mathfrak{so}}_{n-1}),
\text{ or } 
({\mathfrak{so}}_8, {\mathfrak{spin}}_7). 
\end{equation}

On the other hand, 
 the finite multiplicity property in Fact \ref{fact:KO} (1)
 depends on real forms $G$ and $G'$.  
For instance, 
 it is fulfilled 
 for any Riemannian symmetric pair $(G,K)$
 because the Iwasawa decomposition tells us
 that $(G \times G')/\operatorname{diag}G'$
 is real spherical
 if $G'=K$, 
 whereas the finiteness of the $K$-multiplicity traces back 
 to Harish-Chandra's admissibility theorem 
(Fact \ref{fact:HC}).  
(Actually, 
 \cite{xktoshima} in this specific case gives a proof 
 that a quasi-simple irreducible representation of $G$ is $K$-admissible
 by using the boundary value problem
 of a system of partial differential equations.)

\subsection{Visible actions on complex manifolds}

Suppose a (real) Lie group $G$ acts holomorphically 
 on a connected complex manifold $D$.  

\begin{definition}
[{\cite[Def.\ 3.3.1]{xrims40}}]
\label{def:visible}
The action is called {\it{strongly visible}}
 if there exist a non-empty $G$-invariant open subset $D'$ of $D$, 
 a totally real submanifold $S$, 
and an anti-holomorphic diffeomorphism $\sigma$ of $D'$
 such that
\begin{equation*}
\text{$D'=G \cdot S$, }
\text{$\sigma|_S=\operatorname{id}$, and }
\text{$\sigma$ preserves each $G$-orbit in $D'$.  }
\end{equation*}
\end{definition}

Loosely speaking, 
the significance  of this definition is that, 
 for any $G$-equivariant holomorphic vector bundle 
 ${\mathcal{V}} \to D$
 on which $G$ acts strongly visibly on $D$, 
 the multiplicity-free property propagates from 
 fibers to sections, 
 see \cite[Thm.\ 4]{xrims40}
 for a rigorous formulation.

We shall utilize the following results
 in Sections \ref{sec:4} and \ref{sec:para}.  
\begin{fact}
[{\cite[Thm.\ 1.5]{visibleGK}}]
\label{fact:GKvisible}
Let $G/K$ be a Hermitian symmetric space, 
 either of compact type or of non-compact type.  
Then the $G'$-action on $G/K$
 is strongly visible
 for any symmetric pair $(G,G')$.  
\end{fact}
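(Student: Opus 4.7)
The strategy is to exhibit, for each symmetric pair $(G,G')$ with $G/K$ Hermitian symmetric, a totally real slice $S\subset G/K$ together with an anti-holomorphic involution on $G/K$ (or on a suitable $G'$-invariant open subset) witnessing Definition~\ref{def:visible}. First I would normalize: any involution $\sigma$ of $G$ with fixed-point group an open subgroup of $G'$ can be conjugated inside $G$ so that $\sigma\theta=\theta\sigma$ for a chosen Cartan involution $\theta$, and this reduction does not affect visibility. After this normalization, $\sigma$ preserves $K$, descends to an involution of $G/K$, and restricts to a linear involution on ${\mathfrak p}\simeq T_o(G/K)$ commuting with $\theta$, placing the pair in Berger's list.

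Second, I would split the analysis according to whether $\sigma$ commutes or anti-commutes with the invariant complex structure $J$ on ${\mathfrak p}$ (the holomorphic vs.\ anti-holomorphic types). For the slice, I would invoke the generalized Cartan decomposition of Hoogenboom--Matsuki, $G = G'\cdot\exp({\mathfrak b})\cdot K$, where ${\mathfrak b}$ is a maximal abelian subspace of ${\mathfrak p}\cap{\mathfrak g}^{-\sigma}$. Passing to $G/K$ yields $G/K = G'\cdot S$ for $S:=\exp({\mathfrak b})\cdot o$ with $o=eK$. In the anti-holomorphic case, $J$ sends ${\mathfrak b}\subset{\mathfrak p}^{-\sigma}$ into ${\mathfrak p}^{+\sigma}$, which is orthogonal to ${\mathfrak b}$ under the $K$-invariant inner product, so $S$ is automatically totally real. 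In the holomorphic case, $J$ preserves ${\mathfrak p}^{-\sigma}$, and ${\mathfrak b}$ must be replaced by a maximally $J$-isotropic abelian subspace selected one vector per restricted root line relative to a polarization of the restricted root system compatible with $J$.

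Third, I would produce the required anti-holomorphic involution $\sigma_S$. In the anti-holomorphic case, $\sigma$ itself acts anti-holomorphically on $G/K$; it fixes $S$ pointwise, since $\exp({\mathfrak b})\subset G^{-\sigma}$ acts on $o$ via elements whose $\sigma$-action on $G/K$ inverts the corresponding one-parameter groups in a way that pins down $S$; and it preserves each $G'$-orbit because $\sigma$ centralizes $G'$. In the holomorphic case, I would take $\sigma_S$ to be complex conjugation on the Harish-Chandra bounded domain realization $G/K\hookrightarrow{\mathfrak p}^+$ (for non-compact type, or its compact dual counterpart) with respect to the real form of ${\mathfrak p}^+$ determined by the Jordan frame underlying ${\mathfrak b}$; the $G'$-orbit preservation then follows because the chosen real form is stable under $G'$ by construction.

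The principal obstacle is the holomorphic case, where the three defining conditions of strong visibility interact nontrivially: the slice must simultaneously be totally real and be the fixed locus of an anti-holomorphic map whose very definition uses the Hermitian structure that $G'$ preserves. I expect the cleanest route to go through the Jordan-triple description of $({\mathfrak p}^+,{\mathfrak p}^-)$: a Jordan frame adapted to $(G,G')$ will yield both the totally real slice and the compatible real form at once, and the existence of such a frame ultimately reduces to a case-by-case verification along Berger's classification of holomorphic-type symmetric pairs.
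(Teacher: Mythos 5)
First, a point of orientation: the paper does not prove this statement at all --- it is imported verbatim as a \emph{Fact} from \cite[Thm.~1.5]{visibleGK}, so there is no in-paper argument to measure you against. Your overall strategy (normalize $\sigma\theta=\theta\sigma$, split into holomorphic versus anti-holomorphic type, use the generalized Cartan decomposition $G=G'\exp({\mathfrak b})K$ with ${\mathfrak b}$ maximal abelian in ${\mathfrak p}\cap{\mathfrak g}^{-\sigma}$ to get the slice, then exhibit an anti-holomorphic involution) is indeed the strategy of the cited reference. But as written your proposal has a concrete error and a genuine gap.

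The error is in the anti-holomorphic case. You assert that $\sigma$ itself ``fixes $S$ pointwise.'' It does not: for $X\in{\mathfrak b}\subset{\mathfrak p}\cap{\mathfrak g}^{-\sigma}$ one has $\sigma(\exp X\cdot o)=\exp(\sigma X)\cdot o=\exp(-X)\cdot o$, so $\sigma$ \emph{inverts} the slice rather than fixing it, and your parenthetical justification (``inverts the corresponding one-parameter groups in a way that pins down $S$'') is exactly the statement that it does not pin down $S$. The correct witness for Definition~\ref{def:visible} is the composition $\sigma\theta$: its differential at $o$ is $-\sigma|_{\mathfrak p}$, which still anti-commutes with $J$ (so the induced map on $G/K$ is anti-holomorphic), it fixes $\exp({\mathfrak b})\cdot o$ pointwise since $(\sigma\theta)(\exp X)=\exp(-\sigma X)=\exp X$, and it preserves each $G'$-orbit because $\sigma\theta(g)=\theta(g)\in G'$ for $g\in G'$ ($G'$ being $\theta$-stable after your normalization). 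The gap is the holomorphic case: there you replace ${\mathfrak b}$ by some ``maximally $J$-isotropic abelian subspace,'' but the covering property $G/K=G'\cdot S$ is a theorem only for the specific ${\mathfrak b}$ furnished by the Hoogenboom--Matsuki decomposition, so after modifying the slice you must re-establish $G'\cdot S=G/K$, and you do not; moreover your closing sentence concedes that the existence of a compatible Jordan frame ``reduces to a case-by-case verification along Berger's classification,'' which is a plan rather than a proof. The cited reference handles this case uniformly by producing an auxiliary involution of anti-holomorphic type compatible with $\sigma$ (so that ${\mathfrak b}$ can be taken inside the $(-1)$-eigenspaces of both), rather than by inspection of the classification; if you intend the case-by-case route you must actually carry it out for every holomorphic-type pair on Berger's list.
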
 

\subsection{Coisotropic action on coadjoint orbits}
\label{subsec:coiso}

Let $V$ be a vector space
 equipped with a symplectic form $\omega$.  
A subspace $W$ is called {\it{coisotropic}}
 if 
$
\{v \in V: \text{$\omega(v,\cdot)$ vanishes on $W$}\}
$
 is contained in $W$.

The concept of coisotropic actions is defined infinitesimally as follows.  
\begin{definition}
[Huckleberry--Wurzbacher {\cite{huwu90}}]
\label{def:coisotropic}
Let $H$ be a connected Lie group, 
 and $X$ a Hamiltonian $H$-manifold.  
The $H$-action is called
 {\it{coisotropic}}
 if there is an $H$-stable open dense subset $U$ of $X$
 such that $T_x(H \cdot x)$ is a coisotropic subspace
 in the tangent space $T_x X$ for all $x \in U$.  
\end{definition}

Suppose that ${\mathbb{O}}$ is a coadjoint orbit
 of a connected Lie group $G$
 through $\lambda \in {\mathfrak{g}}^{\ast}$.  
Denote by $G_{\lambda}$ the stabilizer subgroup of $\lambda$ in $G$, 
 and by ${\mathfrak{Z}}_{\mathfrak{g}}(\lambda)$ its Lie algebra.  
The Kirillov--Kostant--Souriau symplectic form $\omega$
 on ${\mathbb{O}} \simeq G/G_{\lambda}$
 is given at the tangent space
 $T_{\lambda} {\mathbb{O}} \simeq {\mathfrak{g}}/{\mathfrak{Z}}_{\mathfrak{g}}(\lambda)$
 by
\begin{equation*}
\omega \colon 
{\mathfrak{g}}/{\mathfrak{Z}}_{\mathfrak{g}}(\lambda) \times {\mathfrak{g}}/{\mathfrak{Z}}_{\mathfrak{g}}(\lambda)
\to
{\mathbb{R}}, 
\quad
(X,Y) \mapsto \lambda([X,Y]).  
\end{equation*}

Suppose $G$ is semisimple.  
Then the Killing form induces 
 an isomorphism 
 ${\mathfrak{g}}^{\ast} \overset \sim \to {\mathfrak{g}}$, 
 $\lambda \mapsto X_{\lambda}$.  
The following result is useful in later argument.  
\begin{lemma}
[{\cite[Lem.\ 2]{tkVarnaMin}}]
\label{lem:1.7}
Let $H$ be a connected subgroup 
 with Lie algebra ${\mathfrak{h}}$.  
The $H$-action on a coadjoint orbit ${\mathbb{O}}$
 is coisotropic 
 if there exists a subset $S$ (slice)
 in ${\mathbb{O}}$
 with the following two properties:
\begin{align}
&\text{$\operatorname{Ad}^{\ast}(H) S$ is open dense in ${\mathbb{O}}$}, 
\notag
\\
\label{eqn:coiso}
&\text{$({\mathfrak{h}}+{\mathfrak{Z}}_{\mathfrak{g}}(\lambda))^{\perp}
\subset [X_{\lambda}, {\mathfrak{h}}]$
\qquad 
 for any $\lambda \in S$.}
\end{align}
Here $\perp$ stands for the orthogonal subspace
 with respect to the Killing form.  
\end{lemma}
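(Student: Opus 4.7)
The plan is to verify coisotropicity pointwise on the $H$-stable open dense set $U := \operatorname{Ad}^{\ast}(H) S$, and to reduce the verification to a single algebraic inclusion at each $\lambda \in S$. Openness and density of $U$ are built into the first hypothesis on $S$, and $H$-stability is automatic. Because $\omega$ is $H$-invariant and the assignment $x \mapsto T_x(H\cdot x)$ is $H$-equivariant, once the coisotropic condition is established at a point $\lambda \in S$ it propagates to every point of the orbit $\operatorname{Ad}^{\ast}(H) \lambda$; hence I really only need to check it at points of $S$.

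Next I would compute the symplectic orthogonal $T_{\lambda}(H \cdot \lambda)^{\omega}$ inside $T_{\lambda}{\mathbb{O}} \simeq {\mathfrak{g}}/{\mathfrak{Z}}_{\mathfrak{g}}(\lambda)$. Under the identification $T_{\lambda}(H \cdot \lambda) \simeq ({\mathfrak{h}} + {\mathfrak{Z}}_{\mathfrak{g}}(\lambda))/{\mathfrak{Z}}_{\mathfrak{g}}(\lambda)$ together with $\omega(Y, X) = \lambda([Y, X]) = \langle X_{\lambda}, [Y, X]\rangle$, the ad-invariance of the Killing form rewrites this as $\omega(Y, X) = -\langle Y, [X_{\lambda}, X]\rangle$. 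The condition $\omega(Y, X) = 0$ for all $X \in {\mathfrak{h}}$ is therefore equivalent to $Y \in [X_{\lambda}, {\mathfrak{h}}]^{\perp}$, which gives
\[
T_{\lambda}(H \cdot \lambda)^{\omega} \;=\; [X_{\lambda}, {\mathfrak{h}}]^{\perp}/{\mathfrak{Z}}_{\mathfrak{g}}(\lambda).
\]

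Finally, the coisotropic inclusion $T_{\lambda}(H \cdot \lambda)^{\omega} \subset T_{\lambda}(H \cdot \lambda)$ translates, after lifting to ${\mathfrak{g}}$, into $[X_{\lambda}, {\mathfrak{h}}]^{\perp} \subset {\mathfrak{h}} + {\mathfrak{Z}}_{\mathfrak{g}}(\lambda)$. Because ${\mathfrak{g}}$ is semisimple, the Killing form is non-degenerate and hence $V^{\perp\perp} = V$ for every subspace $V$; taking $\perp$ on both sides converts the inclusion into $({\mathfrak{h}} + {\mathfrak{Z}}_{\mathfrak{g}}(\lambda))^{\perp} \subset [X_{\lambda}, {\mathfrak{h}}]$, which is precisely hypothesis \eqref{eqn:coiso}. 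The only step carrying any real content is the ad-invariance rearrangement that identifies the symplectic orthogonal with $[X_{\lambda}, {\mathfrak{h}}]^{\perp}$; everything else is bookkeeping with quotients and the non-degeneracy of the Killing form, and I do not anticipate any genuine obstacle.
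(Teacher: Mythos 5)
Your proof is correct. The paper does not actually prove this lemma---it is quoted from \cite[Lem.~2]{tkVarnaMin}---but your argument (identifying $T_{\lambda}(H\cdot\lambda)^{\omega}=[X_{\lambda},{\mathfrak{h}}]^{\perp}/{\mathfrak{Z}}_{\mathfrak{g}}(\lambda)$ via the ad-invariance of the Killing form, converting the coisotropic inclusion into \eqref{eqn:coiso} by non-degeneracy and $V^{\perp\perp}=V$, and propagating from $S$ to the $H$-stable open dense set $\operatorname{Ad}^{\ast}(H)S$ by $H$-invariance of the Kirillov--Kostant--Souriau form) is exactly the standard argument given in that reference.
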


The original proof of Fact \ref{fact:KO} in \cite{xktoshima} utilized
 hyperfunction boundary maps 
 for the \lq\lq{if}\rq\rq\ part
 ({\it{i.e.,}} the sufficiency of the finite multiplicity property)
 and a generalized Poisson transform \cite{K14}
 for the \lq\lq{only if}\rq\rq\ part.  
An alternative approach in \cite{K22, Tu}
 for the proof of the 
 \lq{if}\rq\ part of Fact \ref{fact:KO} (2)
 used a theory of holonomic ${\mathcal{D}}$-modules, 
 which is also the method of Theorems \ref{thm:introQsph2} and \ref{thm:fmtensor} below.  
Our proof in this article still uses
 a theory of ${\mathcal{D}}$-modules, 
 and more precisely, 
 the following:

\begin{theorem}
[{\cite{Ki}}]
\label{thm:Ki}
Let $\operatorname{Ann}\Pi$ be the annihilator
 of $\Pi \in {\mathcal{M}}(G)$
 in the universal enveloping algebra $U({\mathfrak{g}}_{\mathbb{C}})$.  
Assume that the $G_{\mathbb{C}}'$-action
 on the associated variety ${\mathcal{V}}(\operatorname{Ann}\Pi)$
 is coisotropic.  
Then the restriction $\Pi|_{G'}$ has the bounded multiplicity property
 \eqref{eqn:bddrest}.  
\end{theorem}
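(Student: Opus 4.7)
My strategy is to translate the bounded multiplicity estimate into a statement about characteristic cycles of holonomic $\mathcal{D}$-modules, and then extract the uniform bound from the coisotropic hypothesis by exploiting the underlying symplectic geometry.

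First, I would reduce to a purely algebraic problem at the level of Harish-Chandra modules. Using Casselman--Wallach globalization, one can upper bound $\dim_{\mathbb{C}}\invHom{G'}{\Pi|_{G'}}{\pi}$ by $\dim_{\mathbb{C}}\invHom{({\mathfrak{g}}',K')}{\Pi_K}{\pi_{K'}}$, where $\Pi_K$ and $\pi_{K'}$ denote the underlying Harish-Chandra modules. It therefore suffices to bound the latter uniformly in $\pi_{K'} \in \operatorname{Irr}({\mathfrak{g}}',K')$. Since both sides are annihilated by $\operatorname{Ann}\Pi$, the problem localizes to the category of modules over the quotient algebra $U({\mathfrak{g}}_{\mathbb{C}})/\operatorname{Ann}\Pi$, whose associated graded coincides with the ring of regular functions on $\mathcal{V}(\operatorname{Ann}\Pi)$ up to nilpotents.

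Second, I would apply Beilinson--Bernstein localization to realize $\Pi_K$ as a holonomic twisted $({\mathcal{D}}_\lambda, K)$-module $\mathcal{M}$ on the flag variety $X = G_{\mathbb{C}}/B_{\mathbb{C}}$. A theorem of Borho--Brylinski identifies the image of the characteristic variety $\operatorname{Ch}(\mathcal{M}) \subset T^{\ast}X$ under the moment map $\mu \colon T^{\ast}X \to {\mathfrak{g}}_{\mathbb{C}}^{\ast}$ with $\mathcal{V}(\operatorname{Ann}\Pi)$. Because $\mu$ is $G_{\mathbb{C}}$-equivariant and Poisson, the coisotropic hypothesis on the $G_{\mathbb{C}}'$-action on $\mathcal{V}(\operatorname{Ann}\Pi)$ lifts, via standard symplectic reduction arguments, to a coisotropy statement for the induced $G_{\mathbb{C}}'$-action on the Lagrangian support $\operatorname{Supp}\operatorname{CC}(\mathcal{M})$ inside the symplectic variety $T^{\ast}X$.

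Third, through a good-filtration argument applied to the pair $(\Pi_K, \pi_{K'})$, the multiplicity $\dim_{\mathbb{C}}\invHom{({\mathfrak{g}}',K')}{\Pi_K}{\pi_{K'}}$ is bounded by intersection numbers of the characteristic cycle $\operatorname{CC}(\mathcal{M})$ with a conormal Lagrangian cycle attached to $\pi_{K'}$ inside an auxiliary cotangent bundle. The decisive point is then to prove that these intersection numbers are bounded \emph{uniformly} in $\pi_{K'}$. The governing principle is that of Huckleberry--Wurzbacher: a Hamiltonian action is coisotropic precisely when the Poisson algebra of $G_{\mathbb{C}}'$-invariant functions is commutative, which imposes a sphericity-type restriction that propagates to the level of $\mathcal{D}$-modules. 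I expect the main obstacle to be this last quantitative step, since $\mathcal{V}(\operatorname{Ann}\Pi)$ is typically singular and $\operatorname{CC}(\mathcal{M})$ may be reducible; controlling the contribution of each irreducible component demands a careful symplectic-geometric analysis of the moment map fibers over a generic $G_{\mathbb{C}}'$-orbit, and it is here that the hypothesis \eqref{eqn:coiso} of Lemma~\ref{lem:1.7} should be invoked to exhibit an explicit transversal slice on which the intersection multiplicities can be counted.
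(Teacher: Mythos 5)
This statement is not proved in the paper: it is imported verbatim from Kitagawa \cite{Ki}, so there is no internal argument to compare yours against. Judged on its own terms, your sketch correctly sets up the standard reductions (Casselman--Wallach to pass to Harish--Chandra modules, Beilinson--Bernstein localization, the Borho--Brylinski identification of $\mu(\operatorname{Ch}(\mathcal{M}))$ with $\mathcal{V}(\operatorname{Ann}\Pi)$), but it stops exactly where the theorem begins. The assertion that $\dim_{\mathbb{C}}\invHom{{\mathfrak{g}}',K'}{\Pi_K}{\pi_{K'}}$ is bounded by intersection numbers of $\operatorname{CC}(\mathcal{M})$ with a conormal cycle attached to $\pi_{K'}$, \emph{and} that these numbers are bounded uniformly over all $\pi_{K'}$, is precisely the content of the theorem; you acknowledge this is ``the main obstacle'' and defer it. Citing the Huckleberry--Wurzbacher characterization (coisotropic $\Leftrightarrow$ Poisson-commutativity of invariants) names the right mechanism but does not convert it into a uniform dimension bound. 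There is also a logical inversion in your final sentence: the hypothesis \eqref{eqn:coiso} of Lemma~\ref{lem:1.7} is a \emph{sufficient criterion} that the paper uses elsewhere to \emph{verify} coisotropy in concrete examples (Theorems~\ref{thm:220928}, \ref{thm:22040624}, \ref{thm:22100105}); inside a proof of Theorem~\ref{thm:Ki} one is only given that the action is coisotropic and has no distinguished slice to invoke.

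For comparison, the route actually taken in \cite{Ki} (as its title indicates) is algebraic rather than intersection-theoretic: one bounds the multiplicity by the degree of a polynomial identity satisfied by a suitable algebra of $G_{\mathbb{C}}'$-invariants in $U({\mathfrak{g}}_{\mathbb{C}})/\operatorname{Ann}\Pi$ acting on spaces of intertwiners. The coisotropic hypothesis forces the associated graded of that invariant algebra --- a Poisson subalgebra of functions on $\mathcal{V}(\operatorname{Ann}\Pi)$ --- to be Poisson-commutative on a dense open set, which bounds its PI-degree and hence the dimensions of its irreducible modules uniformly. That step, which makes the qualitative coisotropy hypothesis quantitative, is the idea missing from your proposal.
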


The associated variety ${\mathcal{V}}(\operatorname{Ann}\Pi)$ is the closure
 of a single nilpotent coadjoint orbit
 if $\Pi \in \operatorname{Irr}(G)$ 
 \cite{BB82, Joseph85}.    
We note
 that the assumption in Theorem \ref{thm:Ki} depends
 only on the pair $({\mathfrak{g}}_{\mathbb{C}}, {\mathfrak{g}}_{\mathbb{C}}')$
 of the complexified Lie algebras 
 as in Fact \ref{fact:KO} (2).

\section{Restriction of highest weight modules}
\label{sec:4}

In this section
 we discuss the bounded multiplicity property \eqref{eqn:bddrest}
 for a symmetric pair $(G,G')$
 when $\Pi$ is a highest weight module of $G$.  
We shall see Theorem \ref{thm:mfB} implies 
 Theorems \ref{thm:mbrest} and \ref{thm:tensor}
 when $G$ is the automorphism group
 of a Hermitian symmetric space, 
see \eqref{eqn:GHerm} below
 for the list of such simple Lie algebras ${\mathfrak{g}}$.

\subsection{Preliminaries for highest weight modules}

Let $G$ be a non-compact simple Lie group,
 $\theta$ a Cartan involution of $G$,
 and $K := \{g \in G:\theta g = g\}$.
We write
 $\mathfrak{g} = \mathfrak{k} + \mathfrak{p}$ 
 for the  corresponding Cartan decomposition
 of the Lie algebra $\mathfrak{g}$ of $G$. 

We assume that $G$ is of {\it Hermitian type},
 that is,
 the Riemannian symmetric space $G/K$ carries
 the structure of a Hermitian symmetric space,
 or equivalently, 
 the center $\mathfrak{c}(\mathfrak{k})$ of $\mathfrak{k}$ is non-trivial. 
The classification of simple Lie algebras ${\mathfrak {g}}$
 of Hermitian type
 is given as follows:
\begin{equation}
\label{eqn:GHerm}
{\mathfrak {su}}(p,q)\,,\ 
{\mathfrak {sp}}(n,{\mathbb{R}})\,,\ 
{\mathfrak {so}}^{\ast}(2m)\,,\ 
{\mathfrak {so}}(m,2) \ (m \ne 2)\,,\ 
{\mathfrak {e}}_{6(-14)}\,,\ 
{\mathfrak {e}}_{7(-25)} \, .  
\end{equation}

In this case, 
 there exists a characteristic element
 $Z \in \mathfrak{c}(\mathfrak{k})$
 such that 
\begin{equation}
\label{eqn:gkppz}
      \mathfrak{g}_\mathbb{C} 
    := \mathfrak{g} \otimes \mathbb{C}
     = \mathfrak{k}_\mathbb{C} \oplus \mathfrak{p}_+ \oplus \mathfrak{p}_-
\end{equation}
 is the eigenspace decomposition of $\operatorname{ad}(Z)$
 with eigenvalues $0$, $\sqrt{-1}$ and $-\sqrt{-1}$,
 respectively, 
and that 
$\mathfrak{c}(\mathfrak{k}) = \mathbb{R} Z$.

Suppose $V$ is an irreducible $({\mathfrak {g}}_{\mathbb{C}}, K)$-module.  
We set
\begin{equation}
\label{eqn:Hpk}
   V^{\mathfrak{p}_+}
   := \{v \in V : Y v = 0 
   \ \text{ for any } Y \in \mathfrak{p}_+\}\, .
\end{equation}
Since $K$ normalizes $\mathfrak{p}_+$,
$V^{\mathfrak{p}_+}$ is a $K$-submodule.
Further,
$V^{\mathfrak{p}_+}$
 is either zero or an irreducible finite-dimensional representation of $K$.
We say
$V$ is 
 a \textit{highest weight module}
 if $V^{\mathfrak{p}_+} \neq \{0\}$, 
 and of {\it{scalar type}}
 if $\dim_{\mathbb{C}} V^{{\mathfrak{p}}_+}=1$.  
For any non-compact simple Lie group $G$
 of Hermitian type, 
there exist infinitely many irreducible unitary highest weight representations of scalar type.

For any symmetric pair $(G,G')$, 
 the $G'$-action on the Hermitian symmetric space $G/K$
 is strongly visible (Fact \ref{fact:GKvisible}).  
Correspondingly, 
 we proved in \cite[Thms.\ A and C]{mf-korea}
 the following multiplicity-free theorems.  
\begin{fact}
[multiplicity-free theorem]
\label{fact:mfA}
Let $G$ be a non-compact simple Lie group of Hermitian type, 
 and $\Pi$, $\Pi_1$, $\Pi_2$ irreducible unitary highest weight representations
 of scalar type.  
\newline
{\rm{(1)}}\enspace
The restriction $\Pi|_{G'}$ is multiplicity-free
 for any symmetric pair $(G,G')$.
\newline
{\rm{(2)}}\enspace
The tensor product $\Pi_1 \otimes \Pi_2$ is multiplicity-free.  
\end{fact}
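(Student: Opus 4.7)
The plan is to deduce both assertions from the general propagation principle (\cite[Thm.\ 4]{xrims40}): if $G'$ acts strongly visibly on a connected complex manifold $D$ and $\mathcal{V} \to D$ is a $G'$-equivariant holomorphic vector bundle whose fiber representations are multiplicity-free, then the space of holomorphic sections is multiplicity-free as a $G'$-representation. The whole argument is then organized around realizing highest weight modules of scalar type as holomorphic sections of line bundles over the Hermitian symmetric space $G/K$.

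First I would realize $\Pi$ as the Hilbert space of holomorphic sections $\mathcal{O}(G/K,\mathcal{L}_\chi)$ of a $G$-equivariant holomorphic line bundle $\mathcal{L}_\chi$ on the Hermitian symmetric space $G/K$, where the character $\chi$ of $K$ is determined by the one-dimensional space $\Pi^{\mathfrak{p}_+}$ (see \eqref{eqn:Hpk}); here the scalar-type hypothesis is precisely what makes the bundle a \emph{line} bundle. The realization uses the Harish-Chandra embedding of $G/K$ as a bounded symmetric domain in $\mathfrak{p}_+$ together with the standard identification of $K$-finite vectors with polynomials on $\mathfrak{p}_+$. By Fact \ref{fact:GKvisible}, the $G'$-action on $G/K$ is strongly visible, providing a slice $S$, an anti-holomorphic involution $\sigma$ with $\sigma|_S=\mathrm{id}$, and $G'\cdot S$ open in $G/K$.

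The key technical step is to lift $\sigma$ to an anti-holomorphic bundle map $\tilde\sigma$ of $\mathcal{L}_\chi$ that intertwines the $G'$-action with its $\sigma$-twist, so that the propagation theorem applies. Since the fibers are one-dimensional, this reduces to verifying that $\chi$ is stable under the relevant twist by $\sigma$; this compatibility is arranged by choosing $S$ to be the real form of a maximal polydisc in $G/K$ adapted to the Cartan involution, which is possible in every symmetric pair by the classification underlying Fact \ref{fact:GKvisible}. Once $\tilde\sigma$ is constructed, the fiber is trivially multiplicity-free and \cite[Thm.\ 4]{xrims40} delivers assertion (1). Assertion (2) then follows by applying (1) to the symmetric pair $(G\times G,\operatorname{diag} G)$: the outer tensor product $\Pi_1\boxtimes\Pi_2$ is an irreducible unitary highest weight representation of scalar type of $G\times G$ on the Hermitian symmetric space $(G/K)\times(G/K)$, and its restriction to $\operatorname{diag} G$ is precisely $\Pi_1\otimes\Pi_2$.

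The main obstacle is the construction of $\tilde\sigma$ compatible with $\chi$, that is, checking that the anti-holomorphic slice provided by strong visibility can be chosen to respect the equivariant line bundle structure simultaneously for \emph{all} scalar-type $\chi$. A uniform treatment goes through the Jordan triple / polydisc decomposition of $G/K$, where $S$ can be taken as the fixed point set of a complex conjugation preserving the characteristic element $Z$ of \eqref{eqn:gkppz}; this ensures that $\sigma^* \mathcal{L}_\chi \simeq \overline{\mathcal{L}_\chi}$ $G'$-equivariantly, which is the exact hypothesis needed.
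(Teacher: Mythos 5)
Your proposal follows essentially the same route the paper indicates for this Fact (which it quotes from \cite[Thms.\ A and C]{mf-korea}): realize the scalar-type highest weight module inside the holomorphic sections of a $G$-equivariant line bundle on the bounded symmetric domain $G/K$, invoke the strong visibility of the $G'$-action from Fact \ref{fact:GKvisible}, and apply the propagation theorem \cite[Thm.\ 4]{xrims40} to pass multiplicity-freeness from the one-dimensional fibers to sections, with the tensor product case reduced to the symmetric pair $(G\times G,\operatorname{diag}G)$. The only minor imprecision is that for general parameters in the Wallach set $\Pi$ is a Hilbert subspace continuously embedded in the Fr{\'e}chet space $\mathcal{O}(G/K,\mathcal{L}_\chi)$ rather than the full space of sections, which is exactly the setting the propagation theorem is designed to handle.
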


The following theorem asserts that
the multiplicities are still uniformly bounded even if we drop
 the assumption
 that $\pi$ is of scalar type.  

\begin{theorem}
[uniformly bounded multiplicities]
\label{thm:mfB}
Let $\Pi$, $\Pi_1$, $\Pi_2$ be the smooth representations 
 of irreducible unitary highest weight representations of $G$.
\newline
{\rm{(1)}}\enspace
The restriction $\Pi|_{G'}$ satisfies bounded multiplicity property \eqref{eqn:bddrest}
 for any symmetric pair $(G,G')$.  
\newline
{\rm{(2)}}\enspace
The tensor product $\Pi_1 \otimes \Pi_2$ satisfies
 the bounded multiplicity property \eqref{eqn:bddtensor}.   
\end{theorem}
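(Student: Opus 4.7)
The strategy is to apply Kitagawa's criterion (Theorem \ref{thm:Ki}): it suffices to show that $G_{\mathbb{C}}'$ acts coisotropically on the associated variety $\mathcal{V}(\operatorname{Ann}\Pi)$, for then the bounded multiplicity property \eqref{eqn:bddrest} follows at once. The appeal of this reduction is that, by Theorem \ref{thm:Ki}, only the underlying nilpotent coadjoint orbit matters, and the $K$-type of $V^{\mathfrak{p}_+}$ drops out of the criterion entirely; the scalar-type multiplicity-free theorem (Fact \ref{fact:mfA}) already suggests that the required geometric condition should be satisfied.

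First, I would identify $\mathcal{V}(\operatorname{Ann}\Pi)$. For any irreducible unitary highest weight $(\mathfrak{g}_{\mathbb{C}},K)$-module $V$, standard results (Yamashita, Nishiyama--Ochiai--Taniguchi, Vogan) identify its associated variety, under the Killing-form isomorphism $\mathfrak{g}_{\mathbb{C}}^{\ast}\simeq\mathfrak{g}_{\mathbb{C}}$, with a $K_{\mathbb{C}}$-invariant closed subvariety of $\mathfrak{p}_+$, and hence a closure $\overline{K_{\mathbb{C}}\cdot Y}$ for some $Y\in\mathfrak{p}_+$; its $G_{\mathbb{C}}$-saturation yields $\mathcal{V}(\operatorname{Ann}\Pi)=\overline{G_{\mathbb{C}}\cdot Y}$. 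The core geometric step is then the coisotropy of the linear $K_{\mathbb{C}}'$-action on $\mathfrak{p}_+$. This is obtained by linearizing the strongly visible $G'$-action on the Hermitian symmetric space $G/K$ at the base point (Fact \ref{fact:GKvisible}): the Harish-Chandra embedding realizes $G/K$ as a $K_{\mathbb{C}}$-stable bounded domain in $\mathfrak{p}_+$, so strong visibility infinitesimalizes to strong visibility, and hence coisotropy (Huckleberry--Wurzbacher), of the $K_{\mathbb{C}}'$-action on $\mathfrak{p}_+$. Equivalently, $\mathcal{O}(\mathfrak{p}_+)$ decomposes multiplicity-freely as a $K_{\mathbb{C}}'$-module, which is the algebraic heart of Fact \ref{fact:mfA}.

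To propagate coisotropy from $K_{\mathbb{C}}'$ on $\mathfrak{p}_+$ to $G_{\mathbb{C}}'$ on $\overline{G_{\mathbb{C}}\cdot Y}$, I would invoke Lemma \ref{lem:1.7}: choose $\lambda$ in the open $K_{\mathbb{C}}'$-orbit of $K_{\mathbb{C}}\cdot Y\subset\mathfrak{p}_+$, take a slice $S$ through $\lambda$ transverse to this $K_{\mathbb{C}}'$-orbit, verify that $\operatorname{Ad}^{\ast}(G_{\mathbb{C}}')\cdot S$ is dense in $\overline{G_{\mathbb{C}}\cdot Y}$ using the generic local decomposition $\mathfrak{g}_{\mathbb{C}}=\mathfrak{g}_{\mathbb{C}}'+\mathfrak{k}_{\mathbb{C}}+\mathfrak{p}_+$ that comes from $\mathfrak{g}=\mathfrak{g}'+\mathfrak{k}$ at a generic point, and check the orthogonality $(\mathfrak{g}_{\mathbb{C}}'+\mathfrak{Z}_{\mathfrak{g}_{\mathbb{C}}}(\lambda))^{\perp}\subset[X_{\lambda},\mathfrak{g}_{\mathbb{C}}']$ at every $\lambda\in S$. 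Via the Jacobson--Morozov $\mathfrak{sl}_2$-triple attached to $X_{\lambda}\in\mathfrak{p}_+$, this orthogonality reduces to the coisotropy relation for $K_{\mathbb{C}}'$ on $\mathfrak{p}_+$ established above. Part (2) then follows from (1) applied to the symmetric pair $(G\times G,\operatorname{diag}G)$, since $\Pi_1\boxtimes\Pi_2$ is an irreducible unitary highest weight representation of $G\times G$ whose restriction to $\operatorname{diag}G$ is $\Pi_1\otimes\Pi_2$. The main obstacle I expect is this propagation step: one must normalize $\sigma$ to commute with the Cartan involution $\theta$, carefully compute Killing-form orthogonals inside $T_{\lambda}(G_{\mathbb{C}}\cdot Y)$, and exploit the Hermitian structure together with the symmetric-pair compatibility between $\sigma$ and $\mathfrak{g}=\mathfrak{k}+\mathfrak{p}$ to close the argument for each of the six families in \eqref{eqn:GHerm}.
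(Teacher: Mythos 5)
Your overall strategy---identify $\mathcal{V}(\operatorname{Ann}\Pi)$ with $\overline{\operatorname{Ad}(G_{\mathbb{C}})X}$ for some $X\in\mathfrak{p}_+$, prove the $G_{\mathbb{C}}'$-action on it is coisotropic, and invoke Theorem \ref{thm:Ki}---is exactly what the paper does, but \emph{only} for symmetric pairs of anti-holomorphic type ($\sigma Z=-Z$ in the dichotomy \eqref{eqn:1.5.1}/\eqref{eqn:1.5.2}). For pairs of holomorphic type ($\sigma Z=Z$, e.g.\ $G'=K$, or $(\mathfrak{sp}(n,\mathbb{R}),\mathfrak{u}(p,q))$) your argument breaks at two concrete points. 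First, your ``core geometric step,'' coisotropy of the $K_{\mathbb{C}}'$-action on $\mathfrak{p}_+$ in the sense you use it (equivalently, by Huckleberry--Wurzbacher, multiplicity-freeness of $\mathcal{O}(\mathfrak{p}_+)$ as a $K_{\mathbb{C}}'$-module), is false there: for $(G,G')=(Sp(n,\mathbb{R}),U(p,q))$ one has $K_{\mathbb{C}}'=GL(p,\mathbb{C})\times GL(q,\mathbb{C})$ acting on $\mathfrak{p}_+=S^2(\mathbb{C}^n)$, and $\mathbb{C}[S^2\mathbb{C}^n]$ is not $GL_p\times GL_q$-multiplicity-free (already for $p=q=1$ a Borel subgroup has dimension $2<3=\dim_{\mathbb{C}}\mathfrak{p}_+$, so there is no open Borel orbit). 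So ``strong visibility of $G'$ on $G/K$ infinitesimalizes to strong visibility of $K'$ on $\mathfrak{p}_+$'' is not a theorem and is false; the algebraic heart of Fact \ref{fact:mfA} in the holomorphic case is the multiplicity-freeness of $\mathbb{C}[\mathfrak{p}_-^{-\sigma}]$, not of $\mathbb{C}[\mathfrak{p}_+]$. Second, the density hypothesis of Lemma \ref{lem:1.7} fails with your slice: for holomorphic type $\sigma\mathfrak{p}_{\pm}=\mathfrak{p}_{\pm}$, so $\mathfrak{g}_{\mathbb{C}}'+\mathfrak{k}_{\mathbb{C}}+\mathfrak{p}_+=\mathfrak{k}_{\mathbb{C}}+\mathfrak{p}_++\mathfrak{p}_-^{\sigma}$ misses $\mathfrak{p}_-^{-\sigma}$, whereas the paper's key identity \eqref{eqn:gskz} uses $\sigma\mathfrak{p}_+=\mathfrak{p}_-$ in an essential way. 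The paper therefore treats holomorphic type by an entirely different, representation-theoretic route: every irreducible highest weight module is $K'$-admissible, hence $G'$-admissible (Definition \ref{def:adm}), and the bounded multiplicity theorem of \cite[Thm.\ B]{mf-korea} in the unitary category is transferred to $\mathcal{M}(G)$ via Proposition \ref{prop:adm}.

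The gap propagates to your part (2): with respect to the complex structure $\mathfrak{p}_+\oplus\mathfrak{p}_+$ for which $\Pi_1\boxtimes\Pi_2$ is a highest weight module, the pair $(G\times G,\operatorname{diag}G)$ is of \emph{holomorphic} type ($\sigma(Z,Z)=(Z,Z)$), so deducing (2) from (1) lands you precisely in the bad case. The paper's fix in Theorem \ref{thm:220928}~(2) is to represent the second associated variety by an orbit through a point $Y\in\mathfrak{p}_-$---legitimate because every nilpotent $G_{\mathbb{C}}$-orbit meeting $\mathfrak{p}_+$ also meets $\mathfrak{p}_-$---after which $\operatorname{diag}\mathfrak{g}_{\mathbb{C}}+(\mathfrak{k}_{\mathbb{C}}\oplus\mathfrak{k}_{\mathbb{C}})+({\mathfrak{Z}}_{\mathfrak{g}_{\mathbb{C}}}(X)\oplus{\mathfrak{Z}}_{\mathfrak{g}_{\mathbb{C}}}(Y))=\mathfrak{g}_{\mathbb{C}}\oplus\mathfrak{g}_{\mathbb{C}}$ does hold and Lemma \ref{lem:1.7} applies. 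To repair your proof you must either introduce the holomorphic/anti-holomorphic dichotomy and supply a separate argument (admissibility, or a different slice and orbit representative) for the holomorphic case, or show directly that the $G_{\mathbb{C}}^{\sigma}$-action on $\overline{\operatorname{Ad}(G_{\mathbb{C}})X}$ is coisotropic for holomorphic $\sigma$---which is not established by the paper and does not follow from the computation you outline.
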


\subsection{Involutions on Hermitian symmetric spaces}
\label{subsec:1.5}  %
The branching law
in Fact \ref{fact:mfA} formulated in the category
 of unitary representations 
 may and may not contain
discrete spectra.  
To clarify this, 
 we observe
 that there are two types of involutions $\sigma$
 of a non-compact simple Lie group $G$ of Hermitian type.  
Without loss of generality, 
 we may assume
 that $\sigma$ commutes 
 with the Cartan involution $\theta$.
We use the same letter $\sigma$ to denote
 its differential.
Then $\sigma$ stabilizes $\mathfrak{k}$ and also $\mathfrak{c}(\mathfrak{k})$.
Because $\sigma^2 = \operatorname{id}$
 and $\mathfrak{c}(\mathfrak{k}) = \mathbb{R} Z$,
 there are two possibilities:
\begin{align}
    \sigma Z &= Z \, ,
\label{eqn:1.5.1}
\\
   \sigma Z &= -Z \, .
\label{eqn:1.5.2}
\end{align}
Geometrically, 
 the condition \eqref{eqn:1.5.1} %
 implies: 
\newline\indent{1-a)}\enspace
 $\sigma$ acts {\bf holomorphically} 
 on the Hermitian symmetric space $G/K$,
\newline\indent{1-b)}\enspace
 $G^\sigma/K^\sigma \hookrightarrow G/K$ defines a complex submanifold,
\newline
whereas the condition
\eqref{eqn:1.5.2} %
implies:
\newline\indent{2-a)}\enspace
 $\sigma$ acts {\bf anti-holomorphically} on  $G/K$,
\newline\indent{2-b)}\enspace
 $G^\sigma/K^\sigma \hookrightarrow G/K$ defines a totally real submanifold.

\begin{definition}
\label{def:holo-anti}
We say the involutive automorphism $\sigma$ is 
\textit{of holomorphic type} if \eqref{eqn:1.5.1} is satisfied,
and is of 
\textit{anti-holomorphic type}
if \eqref{eqn:1.5.2} is satisfied.
The same terminology will be applied also to the symmetric pair 
$(G,G')$
(or its Lie algebras $(\mathfrak{g}, \mathfrak{g}')$)
corresponding to the involution $\sigma$.
\end{definition}

The restriction $\Pi|_{G'}$ is discretely decomposable
 if $(G,G')$ is of holomorphic type
 for any unitary highest weight representation $\Pi$ of $G$
 (\cite{xkAnn98} or \cite[Thm.\ 7.4]{JFA98}).  

\subsection{Proof of Theorem \ref{thm:mfB}}
\label{subsec:pfmfB}
The bounded multiplicity property 
 for symmetric pairs $({\mathfrak{g}},{\mathfrak{g}}')$
 of holomorphic type was established 
 in \cite[Thm.\ B]{mf-korea}
 in the category of unitary representations.  
Since Theorem \ref{thm:mfB} is formulated in the category 
of smooth admissible representations, 
we need some additional argument.

\begin{proof}
[Proof of Theorem \ref{thm:mfB}]
First, 
 suppose that the symmetric pair $(G,G')$ is of holomorphic type.
In this case, 
 any irreducible highest weight module
 of $G$ 
 is $K'$-admissible, 
 hence $G'$-admissible
 (Definition \ref{def:adm}), 
 see \cite{xkAnn98} or \cite[Thm.\ 7.4]{JFA98}.
In turn, 
 the bounded multiplicity theorem
 (\cite[Thm.\ B]{mf-korea})
 in the category of unitary representations
 implies  the one in the category
 of smooth admissible representations by Proposition \ref{prop:adm}.

Next suppose that $(G,G')$ is of anti-holomorphic type.  
Via the identification ${\mathfrak{g}}^{\ast}\simeq {\mathfrak{g}}$, 
the associated variety 
 is the closure of an adjoint orbit $\operatorname{Ad}(G_{\mathbb{C}}) X$
 for some $X \in {\mathfrak{p}}_+$.  
Then Theorem \ref{thm:mfB} reduces
 to the following geometric results owing to Theorem \ref{thm:Ki}.  
\end{proof}

\begin{theorem}
\label{thm:220928}
Let $G$ be a non-compact simple Lie group
 of Hermitian type.  
Retain the notation as in \eqref{eqn:gkppz}.   
\newline
{\rm{(1)}}\enspace
If $\sigma$ is of anti-holomorphic type, 
 then the $G_{\mathbb{C}}^{\sigma}$-action on $\operatorname{Ad}(G_{\mathbb{C}})X$
 is coisotropic for any $X \in {\mathfrak{p}}_+$.  
\newline
{\rm{(2)}}\enspace
The diagonal $G_{\mathbb{C}}$-action
 on $\operatorname{Ad}(G_{\mathbb{C}}) X \times \operatorname{Ad}(G_{\mathbb{C}}) Y$
is  coisotropic 
 for any $X \in {\mathfrak{p}}_+$ and $Y \in {\mathfrak{p}}_-$.  
\end{theorem}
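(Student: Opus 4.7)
The plan is to apply Lemma~\ref{lem:1.7} in both cases: it suffices to exhibit a slice $S$ in the orbit (resp.\ product of orbits) whose translates by the acting group are open dense, and to verify the algebraic inclusion $(\mathfrak{h}+\mathfrak{Z}_{\mathfrak{g}_{\mathbb{C}}}(\lambda))^{\perp}\subset [X_{\lambda},\mathfrak{h}]$ (Killing-form orthogonal) at each $\lambda\in S$. Here $\mathfrak{h}=\mathfrak{g}_{\mathbb{C}}^{\sigma}$ for (1) and $\mathfrak{h}=\operatorname{diag}\mathfrak{g}_{\mathbb{C}}$ for (2).

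For (1), the key structural input is that the anti-holomorphic condition $\sigma Z=-Z$ forces $\sigma(\mathfrak{p}_{+})=\mathfrak{p}_{-}$. This allows one to pick a Harish-Chandra--Moore system $\{\gamma_{1},\dots,\gamma_{r}\}$ of strongly orthogonal noncompact roots compatibly with $\sigma$, so that $\sigma(E_{\gamma_{i}})=E_{-\gamma_{i}}$ and the Cayley elements $E_{\gamma_{i}}+E_{-\gamma_{i}}$ span an abelian subspace inside $\mathfrak{g}_{\mathbb{C}}^{\sigma}$. By the Kostant--Rallis/Richardson description of $K_{\mathbb{C}}$-orbits on $\mathfrak{p}_{+}$, every $X\in\mathfrak{p}_{+}$ is $K_{\mathbb{C}}$-conjugate to $\sum_{i=1}^{s}E_{\gamma_{i}}$ for some $s\le r$; I would then take $S=\{\sum_{i=1}^{s}t_{i}E_{\gamma_{i}}:t_{i}\in\mathbb{C}^{\times}\}$ as the slice inside $\mathcal{O}:=\operatorname{Ad}(G_{\mathbb{C}})X$. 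Density of $\operatorname{Ad}(G_{\mathbb{C}}^{\sigma})\cdot S$ in $\mathcal{O}$ would be checked by comparing $\dim \mathfrak{Z}_{\mathfrak{g}_{\mathbb{C}}^{\sigma}}(X)$ with $\dim \mathfrak{Z}_{\mathfrak{g}_{\mathbb{C}}}(X)$ via the strongly orthogonal data. The algebraic inclusion at $\lambda\in S$ then becomes a root-string calculation: decompose $\mathfrak{g}_{\mathbb{C}}$ into joint eigenspaces of $\operatorname{ad}(H_{\gamma_{1}}),\dots,\operatorname{ad}(H_{\gamma_{s}})$, split each under $\sigma$, and produce, for a root vector $E_{\alpha}$ orthogonal to $\mathfrak{g}_{\mathbb{C}}^{\sigma}+\mathfrak{Z}_{\mathfrak{g}_{\mathbb{C}}}(X_{\lambda})$, an element $Y\in \mathfrak{g}_{\mathbb{C}}^{\sigma}$ with $[X_{\lambda},Y]=E_{\alpha}$, using $\sigma(E_{\gamma_{i}})=E_{-\gamma_{i}}$ and the generic injectivity of $\operatorname{ad}(X_{\lambda})$ on the complement of the centralizer.

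For (2), the cleanest route is a \emph{doubling reduction} to (1). Put $\widetilde G:=G\times G$; its complexified Lie algebra is Hermitian (no longer simple, but simplicity is not used in the argument for (1)) with characteristic element $\widetilde Z:=(Z,-Z)$ and decomposition $\widetilde{\mathfrak{p}}_{+}=\mathfrak{p}_{+}\oplus\mathfrak{p}_{-}$. The flip involution $\tau(X_{1},X_{2})=(X_{2},X_{1})$ satisfies $\tau\widetilde Z=-\widetilde Z$, so $(\widetilde G,\operatorname{diag}G)$ plays the role of an anti-holomorphic symmetric pair. Given $X\in\mathfrak{p}_{+}$ and $Y\in\mathfrak{p}_{-}$, the pair $(X,Y)$ lies in $\widetilde{\mathfrak{p}}_{+}$, its $\widetilde G_{\mathbb{C}}$-orbit equals $\operatorname{Ad}(G_{\mathbb{C}})X\times \operatorname{Ad}(G_{\mathbb{C}})Y$, and the $(\widetilde G_{\mathbb{C}})^{\tau}$-action on this orbit is the diagonal $G_{\mathbb{C}}$-action. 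Hence the argument of (1), applied to $\widetilde G$, yields (2).

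The hard part will be the algebraic verification inside (1): while the dimension count and the density of $\operatorname{Ad}(G_{\mathbb{C}}^{\sigma})\cdot S$ follow from classical Kostant--Rallis structure theory, the root-string inclusion $(\mathfrak{g}_{\mathbb{C}}^{\sigma}+\mathfrak{Z}_{\mathfrak{g}_{\mathbb{C}}}(X_{\lambda}))^{\perp}\subset[X_{\lambda},\mathfrak{g}_{\mathbb{C}}^{\sigma}]$ must be established uniformly over all $K_{\mathbb{C}}$-orbits in $\mathfrak{p}_{+}$ (indexed by the rank $s$) and over all anti-holomorphic involutions $\sigma$. For non-maximal rank $X$ (where $X_{\lambda}$ fails to be regular in any Cartan) and for the exceptional Hermitian types $\mathfrak{e}_{6(-14)}$ and $\mathfrak{e}_{7(-25)}$, I anticipate a case-by-case verification guided by Berger's classification~\cite{Be57}.
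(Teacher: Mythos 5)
Your overall strategy --- invoking Lemma \ref{lem:1.7} and exploiting $\sigma Z=-Z\Rightarrow\sigma\mathfrak{p}_+=\mathfrak{p}_-$ --- is the right one, and your doubling reduction of (2) to (1) via $\widetilde G=G\times G$, $\widetilde Z=(Z,-Z)$, and the flip involution is sound and essentially what the paper does (it runs the argument of (1) verbatim on $\mathfrak{g}_{\mathbb{C}}\oplus\mathfrak{g}_{\mathbb{C}}$ with slice $\operatorname{Ad}(K_{\mathbb{C}}\times K_{\mathbb{C}})(X,Y)$). However, for part (1) your proposal stops short of a proof at precisely the two points that carry the content. First, you take the slice $S=\{\sum_i t_iE_{\gamma_i}\}$ built from a $\sigma$-compatible Harish-Chandra system of strongly orthogonal roots; both the existence of such a compatible system and the density of $\operatorname{Ad}(G_{\mathbb{C}}^{\sigma})\cdot S$ for this \emph{small} slice are nontrivial and left unverified (density would require $\mathfrak{g}_{\mathbb{C}}=\mathfrak{g}_{\mathbb{C}}^{\sigma}+\operatorname{span}\{E_{\gamma_i}\}+\mathfrak{Z}_{\mathfrak{g}_{\mathbb{C}}}(X)$, which does not follow from anything you state). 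Second, and more seriously, the coisotropy inclusion $(\mathfrak{g}_{\mathbb{C}}^{\sigma}+\mathfrak{Z}_{\mathfrak{g}_{\mathbb{C}}}(\lambda))^{\perp}\subset[X_{\lambda},\mathfrak{g}_{\mathbb{C}}^{\sigma}]$ is deferred to an unspecified ``root-string calculation'' with an anticipated case-by-case check over ranks $s$ and over the exceptional types. That is the theorem; a plan that postpones it is not a proof.

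The paper shows that neither difficulty arises if you take the \emph{larger} slice $S=\operatorname{Ad}(K_{\mathbb{C}})X$: openness of $\operatorname{Ad}(G_{\mathbb{C}}^{\sigma})S$ is then immediate from $\mathfrak{g}_{\mathbb{C}}=\sigma\mathfrak{p}_++\mathfrak{k}_{\mathbb{C}}+\mathfrak{p}_+=\mathfrak{g}_{\mathbb{C}}^{\sigma}+\mathfrak{k}_{\mathbb{C}}+\mathfrak{Z}_{\mathfrak{g}_{\mathbb{C}}}(X)$ (using that $\mathfrak{p}_+$ is abelian, so $\mathfrak{p}_+\subset\mathfrak{Z}_{\mathfrak{g}_{\mathbb{C}}}(X)$), and the inclusion is established uniformly, with no root combinatorics and no case analysis, from two elementary identities: $[X,\mathfrak{p}_-]=\mathfrak{Z}_{\mathfrak{k}_{\mathbb{C}}}(X)^{\perp}$ inside $\mathfrak{k}_{\mathbb{C}}$ (one inclusion by invariance of the Killing form, equality by the dimension count $\dim\operatorname{Ad}(G_{\mathbb{C}})X=2\dim\operatorname{Ad}(K_{\mathbb{C}})X$), and $[X,\mathfrak{p}_-]=\{[X,W+\sigma W]:W\in\mathfrak{p}_-\}=[X,\mathfrak{p}_{\mathbb{C}}^{\sigma}]$. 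Combining these with $\mathfrak{g}_{\mathbb{C}}^{\sigma}+\mathfrak{Z}_{\mathfrak{g}_{\mathbb{C}}}(X)=\mathfrak{p}_{\mathbb{C}}+\mathfrak{k}_{\mathbb{C}}^{\sigma}+\mathfrak{Z}_{\mathfrak{k}_{\mathbb{C}}}(X)$ gives $(\mathfrak{g}_{\mathbb{C}}^{\sigma}+\mathfrak{Z}_{\mathfrak{g}_{\mathbb{C}}}(X))^{\perp}\subset[X,\mathfrak{g}_{\mathbb{C}}^{\sigma}]$ in a few lines. I recommend you replace your slice and the projected case-by-case computation with this argument; your reduction of (2) can then be kept as is.
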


\begin{proof}
(1)\enspace
For any non-zero $X \in {\mathfrak{p}}_+$, 
 one can take $Y \in {\mathfrak{p}}_-$
 and $H \in {\mathfrak{k}}_{\mathbb{C}}$
 such that $\{X, H, Y\}$
 forms an ${\mathfrak{s l}}_2$-triple.  
We write ${\mathfrak{s l}}_2^X$ 
 for the corresponding complex subalgebra
 in ${\mathfrak{g}}_{\mathbb{C}}$.

Since $\sigma Z=-Z$, 
 one has $\sigma {\mathfrak{p}}_+={\mathfrak{p}}_-$.  
Moreover, 
 one has ${\mathfrak{Z}}_{\mathfrak{g}_{\mathbb{C}}}(X) \supset {\mathfrak{p}}_+$
 because ${\mathfrak{p}}_+$ is abelian.  
Hence the decomposition \eqref{eqn:gkppz} yields 
\begin{align}
{\mathfrak{g}}_{\mathbb{C}}
=&\sigma{\mathfrak{p}}_+ + \mathfrak{k}_{\mathbb{C}} + {\mathfrak{p}}_+
\notag
\\
=&\sigma ({\mathfrak{Z}}_{\mathfrak{g}_{\mathbb{C}}}(X))+{\mathfrak{k}}_{\mathbb{C}}+{\mathfrak{Z}}_{\mathfrak{g}_{\mathbb{C}}}(X)
\notag
\\
=&{\mathfrak{g}}_{\mathbb{C}}^{\sigma}+ \mathfrak{k}_{\mathbb{C}} +{\mathfrak{Z}}_{\mathfrak{g}_{\mathbb{C}}}(X).  
\label{eqn:gskz}
\end{align}
We set $S:=\operatorname{Ad}(K_{\mathbb{C}}) X$.  
The equality \eqref{eqn:gskz} implies that 
$\operatorname{Ad}(G_{\mathbb{C}}^{\sigma})S$ is open 
in $\operatorname{Ad}(G_{\mathbb{C}}) X$.  
By Lemma \ref{lem:1.7}, 
 it suffices to show
\[ 
({\mathfrak{g}}_{\mathbb{C}}^{\sigma} + {\mathfrak{Z}}_{\mathfrak{g}_{\mathbb{C}}}(W))^{\perp}
 \subset [W, {\mathfrak{g}}_{\mathbb{C}}^{\sigma}]
\quad\text{for all $W \in S$}.  
\]
Without loss of generality, 
 we may replace
 $W=\operatorname{Ad}(k)X$ $(\in {\mathfrak{p}}_+)$ with $X$.

We claim the following equality
\begin{equation}
\label{eqn:22102133}
   [X, {\mathfrak{p}}_-]
=
   {\mathfrak{Z}}_{\mathfrak{k}_{\mathbb{C}}}(X)^{\perp}
\quad
\text{in ${\mathfrak{k}}_{\mathbb{C}}$}, 
\end{equation}
where the right-hand side stands for the orthogonal complement
 of ${\mathfrak{Z}}_{\mathfrak{k}_{\mathbb{C}}}(X)$ in ${\mathfrak{k}}_{\mathbb{C}}$
 with respect to the Killing form $B$ of ${\mathfrak{g}}_{\mathbb{C}}$.  
The inclusion 
$
   [X, {\mathfrak{p}}_-] 
   \subset
  {\mathfrak{Z}}_{\mathfrak{k}_{\mathbb{C}}}(X)^{\perp}
$
 is direct
 because 
$
    B([X, {\mathfrak{p}}_-], W)
  = B([X,W], {\mathfrak{p}}_-)
  = \{0\}
$
 for any $W \in {\mathfrak{Z}}_{\mathfrak{k}_{\mathbb{C}}}(X)$.  
On the other hand, 
 since $\dim\operatorname{Ad}(G_{\mathbb{C}})X=2 \dim\operatorname{Ad}(K_{\mathbb{C}})X$, 
 one has 
 $\dim [X, {\mathfrak{p}}_{\mathbb{C}}]
 =\dim {\mathfrak{k}}_{\mathbb{C}}- \dim {\mathfrak{Z}}_{\mathfrak{k}_{\mathbb{C}}}(X)$.  
As $X$ is an element
 of the abelian subalgebra ${\mathfrak{p}}_+$, 
 one has 
 $[X, {\mathfrak{p}}_-]=[X, {\mathfrak{p}}_{\mathbb{C}}]$, 
 and thus the equality \eqref{eqn:22102133} is proved.

Since ${\mathfrak{p}}_- = \sigma({\mathfrak{p}}_+)$
 and ${\mathfrak{p}}_+ \subset {\mathfrak{Z}}_{\mathfrak{p}_{\mathbb{C}}}(X)$, 
 one has 
\[
 {\mathfrak{p}}_{\mathbb{C}} = {\mathfrak{p}}_- \oplus {\mathfrak{p}}_+
 = {\mathfrak{p}}_{\mathbb{C}}^{\sigma} + {\mathfrak{Z}}_{\mathfrak{p}_{\mathbb{C}}}(X), 
\]
hence 
$
   {\mathfrak{g}}_{\mathbb{C}}^{\sigma} + {\mathfrak{Z}}_{\mathfrak{g}_{\mathbb{C}}}(X)
   =
  {\mathfrak{p}}_{\mathbb{C}} + {\mathfrak{k}}_{\mathbb{C}}^{\sigma} 
  + {\mathfrak{Z}}_{\mathfrak{k}_{\mathbb{C}}}(X).
$
Therefore
\begin{align}
\notag
  ({\mathfrak{g}}_{\mathbb{C}}^{\sigma} + {\mathfrak{Z}}_{\mathfrak{g}_{\mathbb{C}}}(X))^{\perp}
  =& ({\mathfrak{k}}_{\mathbb{C}}^{\sigma} + {\mathfrak{Z}}_{\mathfrak{k}_{\mathbb{C}}}(X))^{\perp}
\quad
\text{in ${\mathfrak{k}}_{\mathbb{C}}$}
\\
\label{eqn:22102130}
 =&[X, {\mathfrak{p}}_-]^{-\sigma}.  
\end{align} 
Since $[X, {\mathfrak{p}}_-]=\{[X, W + \sigma W]: W \in {\mathfrak{p}}_-\}
=[X,{\mathfrak{p}}_{\mathbb{C}}^{\sigma}]$, 
 we have shown
 the desired inclusive relation 
$
   ({\mathfrak{g}}_{\mathbb{C}}^{\sigma} + {\mathfrak{Z}}_{\mathfrak{g}_{\mathbb{C}}}(X))^{\perp}
\subset
[X, {\mathfrak{g}}_{\mathbb{C}}^{\sigma}].  
$
\newline
(2)\enspace
We apply the same argument as in (1)
 and obtain 
\begin{align*}
{\mathfrak{g}}_{\mathbb{C}} \oplus {\mathfrak{g}}_{\mathbb{C}}
=&\operatorname{diag} {\mathfrak{g}}_{\mathbb{C}}
 + ({\mathfrak{k}}_{\mathbb{C}} \oplus {\mathfrak{k}}_{\mathbb{C}})
 + ({\mathfrak{Z}}_{\mathfrak{g}_{\mathbb{C}}}(X) \oplus {\mathfrak{Z}}_{\mathfrak{g}_{\mathbb{C}}}(Y))
\\
=&\operatorname{diag} {\mathfrak{g}}_{\mathbb{C}}
 + [\operatorname{diag} {\mathfrak{g}}_{\mathbb{C}}, (X,Y)]
 + ({\mathfrak{Z}}_{\mathfrak{g}_{\mathbb{C}}}(X) \oplus {\mathfrak{Z}}_{\mathfrak{g}_{\mathbb{C}}}(Y)).  
\end{align*}
Thus, 
 by setting a submanifold
 $S:= \operatorname{Ad}(K_{\mathbb{C}} \times K_{\mathbb{C}})(X,Y)$, 
 one sees that $\operatorname{Ad}(\operatorname{diag}G_{\mathbb{C}}) S$
 is open
 in $\operatorname{Ad}(G_{\mathbb{C}} \times G_{\mathbb{C}})(X,Y)$
 and that 
\begin{equation*}
(\operatorname{diag} {\mathfrak{g}}_{\mathbb{C}}
+ ({\mathfrak{Z}}_{\mathfrak{g}_{\mathbb{C}}}(X') \oplus {\mathfrak{Z}}_{\mathfrak{g}_{\mathbb{C}}}(Y'))^{\perp}
\subset 
[(X',Y'), \operatorname{diag} {\mathfrak{g}}_{\mathbb{C}}]
\end{equation*}
for any $(X',Y') \in S$.  
Now the second assertion follows from Lemma \ref{lem:1.7}.  
\end{proof}

\section{Degenerate principal series representations}
\label{sec:para}

In this section 
 we discuss which degenerate principal series representation $\Pi$ of $G$
 satisfies the bounded multiplicity property \eqref{eqn:bddrest}
 for a symmetric pair $(G,G')$.  
In particular, 
 we shall see 
 that Theorems \ref{thm:mbrest} and \ref{thm:tensor} hold
 if $G$ is the automorphism group
 of a para-Hermitian symmetric space, 
 see Table \ref{table:4.1} below
 for the list of such simple Lie algebras ${\mathfrak{g}}$.

\subsection{Bounded multiplicity theorems
 for the restriction of degenerate principal series representations}

For a reductive Lie group $G$, 
 we write $G_U$
 for the compact real form
 of the complex Lie group $G_{\mathbb{C}}$
 with Lie algebra ${\mathfrak{g}}_{\mathbb{C}}=\operatorname{Lie}(G) \otimes_{\mathbb{R}} {\mathbb{C}}$.  

For a Lie group $P$, 
 we write $\operatorname{Char}(P)$
 for the set of the equivalence classes
 of one-dimensional representations of $P$, 
 and $\operatorname{Irr}(P)_f$ 
 for that of finite-dimensional irreducible representations of $P$.

The following theorems are special cases
 of the general results \cite[Thm.\ 1.4]{K22}.  

\begin{theorem}
[{\cite[Ex.\ 4.5]{K22}}]
\label{thm:introQsph2}
Let $G \supset G'$ be a pair of real reductive algebraic Lie groups, 
 and $P$ a parabolic subgroup of $G$.  
Then one has the equivalence on the triple $(G,G';P):$
\par\noindent
{\rm{(i)}}\enspace
One has
\[
\underset{\chi \in \operatorname{Char}(P)}\sup
\,\,
\underset{\pi \in \operatorname{Irr}(G')}\sup 
[\operatorname{Ind}_P^G(\chi)|_{G'}:\pi]< \infty.  
\]
\par\noindent
{\rm{(ii)}}\enspace
There exists $C>0$ such that 
\[
   \underset{\pi \in \operatorname{Irr}(G')}\sup 
   [\operatorname{Ind}_P^G(\xi)|_{G'}:\pi]< C \dim \xi
\]
for any $\xi \in \operatorname{Irr}(P)_f$.  
\par\noindent
{\rm{(iii)}}\enspace
$G_{\mathbb{C}}/P_{\mathbb{C}}$ is strongly $G_U'$-visible
 (Definition \ref{def:visible}).  
\par\noindent
{\rm{(iv)}}\enspace
$G_{\mathbb{C}}/P_{\mathbb{C}}$ is $G_{\mathbb{C}}'$-spherical.  
\end{theorem}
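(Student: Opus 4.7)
The plan is to establish the four-way equivalence by the cycle (iii) $\Leftrightarrow$ (iv), (iv) $\Rightarrow$ (ii) $\Rightarrow$ (i), and (i) $\Rightarrow$ (iv). The implication (ii) $\Rightarrow$ (i) is immediate because every $\chi \in \operatorname{Char}(P)$ satisfies $\dim \chi = 1$, so the linear-in-$\dim \xi$ bound in (ii) specializes to the uniform bound in (i). The real content sits in the geometric equivalence (iii) $\Leftrightarrow$ (iv), the representation-theoretic sufficiency (iv) $\Rightarrow$ (ii), and the necessity (i) $\Rightarrow$ (iv); I expect the last to be the main obstacle.

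For (iii) $\Leftrightarrow$ (iv), I would argue through Matsuki-type duality between $G_U'$-orbits and $G_{\mathbb{C}}'$-orbits on the compact complex flag variety $X := G_{\mathbb{C}}/P_{\mathbb{C}}$. Sphericity guarantees a dense $B_{\mathbb{C}}'$-orbit; a maximal torus $T_U'$ of $G_U'$ compatible with a Borel adapted to this open orbit then supplies a totally real slice $S$ through its fixed-point locus, and the anti-holomorphic involution $\sigma$ of Definition \ref{def:visible} is the one induced by the real structure coming from $G_U'$. The equality $X = G_U' \cdot S$ follows by exhausting all $G_U'$-orbits via the Matsuki correspondence. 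Conversely, a strongly visible slice $S$ forces $\dim_{\mathbb{R}} S = \dim_{\mathbb{C}} X$, which via the orbit-dimension formula yields a dense $B_{\mathbb{C}}'$-orbit, i.e., sphericity.

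For (iv) $\Rightarrow$ (ii), I plan to use the $\mathcal{D}$-module approach behind Theorem \ref{thm:Ki}. The associated variety of $\operatorname{Ind}_P^G(\xi)$ is the closure of $\operatorname{Ad}^*(G_{\mathbb{C}}) \cdot \mathfrak{n}_P^{\perp}$, which coincides with the image of the moment map $T^*(G_{\mathbb{C}}/P_{\mathbb{C}}) \to \mathfrak{g}_{\mathbb{C}}^*$; sphericity of $X$ lifts symplectically to coisotropicity of the $G_{\mathbb{C}}'$-action on this associated variety, and Theorem \ref{thm:Ki} then produces a uniform multiplicity bound for every irreducible subquotient of $\operatorname{Ind}_P^G(\xi)$. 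The linear factor $\dim \xi$ enters from Frobenius reciprocity: $\operatorname{Hom}_{G'}(\operatorname{Ind}_P^G(\xi), \pi)$ is a space of $\xi^*$-valued $G'$-equivariant distributions on $G/P$, and along each of the finitely many $G' \backslash G/P$ double cosets (finite by the real-form analogue of sphericity) the distributional contribution is bounded by $\dim \xi$ times a geometric constant depending only on the stratum.

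The main obstacle is (i) $\Rightarrow$ (iv). I would proceed by contraposition: if $G_{\mathbb{C}}/P_{\mathbb{C}}$ is not $G_{\mathbb{C}}'$-spherical, then $B_{\mathbb{C}}'$ has a positive-dimensional family of orbits on $X$, equivalently the generic $B_{\mathbb{C}}'$-isotropy is positive-dimensional. Pairing a family of unramified principal series $\pi_\lambda$ of $G'$ against line bundles $\mathcal{L}_\chi$ on $X$ and applying double Frobenius reciprocity, one relates $\operatorname{Hom}_{G'}(\operatorname{Ind}_P^G(\chi), \pi_\lambda)$ to $(P, P')$-equivariant distributions on $G$, whose dimension grows unboundedly as $\chi$ is chosen to be trivial on increasingly many isotropy characters supplied by the non-sphericity. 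The subtle point, and the technical heart of the obstacle, is verifying linear independence of the resulting symmetry breaking functionals together with their continuity on the Fr\'echet model; this is where one typically needs a careful stratification of $G_{\mathbb{C}}' \backslash G_{\mathbb{C}} / P_{\mathbb{C}}$ and an analysis of distributional jets transverse to each stratum.
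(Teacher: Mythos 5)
The paper does not actually prove this theorem: it is imported verbatim as a special case of \cite[Thm.\ 1.4]{K22} (stated there as Example 4.5), so there is no internal argument to compare yours against; what follows is an assessment of your proposal on its own terms, and it has genuine gaps. The most serious is (iv) $\Rightarrow$ (iii). The implication ``spherical $\Rightarrow$ strongly visible'' for flag varieties is not known by any uniform argument of the kind you sketch; in the literature it is established by classifying the spherical pairs and exhibiting a slice case by case (Tanaka \cite{xtanaka12}, building on \cite{xrims40} and \cite{xhnoo}). Your claim that the fixed-point locus of a compatible maximal torus of $G_U'$ supplies a totally real slice $S$ with $G_{\mathbb{C}}/P_{\mathbb{C}} = G_U'\cdot S$ does not follow from Matsuki duality, and your converse direction is also off: strong visibility does not force $\dim_{\mathbb{R}} S = \dim_{\mathbb{C}} X$, and the passage from a visible slice to a dense $B_{\mathbb{C}}'$-orbit is not a dimension count. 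The standard route for (iii) $\Rightarrow$ (iv) is instead the propagation of multiplicity-freeness from fibers to sections (\cite[Thm.\ 4]{xrims40}) applied to all $G_U'$-equivariant line bundles on $G_{\mathbb{C}}/P_{\mathbb{C}}$, followed by the Vinberg--Kimelfeld criterion.

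Two further gaps. First, (i) $\Rightarrow$ (iv) is the analytic heart of the theorem and you leave it as an acknowledged obstacle; producing unboundedly many linearly independent \emph{continuous} symmetry breaking operators from non-sphericity is exactly what makes this a theorem rather than an observation, and your sketch (characters trivial on ``increasingly many isotropy characters'') does not yet produce them. Second, in (iv) $\Rightarrow$ (ii) you invoke Theorem \ref{thm:Ki}, but that result bounds $\sup_{\pi}[\Pi|_{G'}:\pi]$ for a fixed $\Pi$ with no control on how the constant depends on $\Pi$; to obtain a single constant $C$ valid for all $\xi \in \operatorname{Irr}(P)_f$ with the stated linear dependence on $\dim\xi$ one needs the holonomic ${\mathcal{D}}$-module estimates of \cite{K22, Tu}, where the bound is expressed through the finite, $\xi$-independent orbit structure of $G_{\mathbb{C}}'$ on $G_{\mathbb{C}}/P_{\mathbb{C}}$ rather than through the coisotropicity criterion alone.
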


\begin{theorem}
[{\cite[Cor.\ 4.10]{K22}}]
\label{thm:fmtensor}
Let $G$ be a real reductive algebraic Lie group, 
 and $P_j$ $(j=1,2)$ parabolic subgroups.  
Then the following four conditions
 on the triple $(G,P_1, P_2)$
 are equivalent:
\par\noindent
{\rm{(i)}}\enspace
One has 
\begin{equation}
\label{eqn:JLT4102}
\underset{
\chi_1 \in \operatorname{Char}(P_1)
}
\sup
\,\,
\underset{
\chi_2 \in \operatorname{Char}(P_2)
}
\sup
\,\,
\underset{\Pi \in \operatorname{Irr}(G)}\sup
[{\operatorname{Ind}_{P_1}^G(\chi_1) \otimes \operatorname{Ind}_{P_2}^G(\chi_2)}:\Pi]
   <\infty.
\end{equation}

\par\noindent
{\rm{(ii)}}\enspace
There exists $C>0$ such that 
\[
\underset{
\Pi \in \operatorname{Irr}(G)
}
\sup
[{\operatorname{Ind}_{P_1}^G(\xi_1) \otimes \operatorname{Ind}_{P_2}^G(\xi_2)}:\Pi]
   \le C \dim \xi_1 \dim \xi_2
\]
for any $\xi_1, \xi_2 \in \operatorname{Irr}(P)_f$.  

\par\noindent
{\rm{(iii)}}\enspace
$(G_{\mathbb{C}} \times G_{\mathbb{C}})/({P_1}_{\mathbb{C}} \times {P_2}_{\mathbb{C}})$
 is $\operatorname{diag}(G_U)$-strongly visible.  

\par\noindent
{\rm{(iv)}}\enspace
$(G_{\mathbb{C}} \times G_{\mathbb{C}})/({P_1}_{\mathbb{C}} \times {P_2}_{\mathbb{C}})$
 is $\operatorname{diag}(G_{\mathbb{C}})$-spherical.  
\end{theorem}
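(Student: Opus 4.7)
The plan is to reduce Theorem \ref{thm:fmtensor} to Theorem \ref{thm:introQsph2} by realizing the tensor product of representations of $G$ as the restriction of an outer tensor product from $G \times G$ to the diagonal subgroup $\operatorname{diag} G$. Concretely, for $\xi_j \in \operatorname{Irr}(P_j)_f$, there is a canonical isomorphism
\[
\operatorname{Ind}_{P_1}^G(\xi_1) \otimes \operatorname{Ind}_{P_2}^G(\xi_2) \simeq \operatorname{Ind}_{P_1 \times P_2}^{G \times G}(\xi_1 \boxtimes \xi_2) \big|_{\operatorname{diag} G},
\]
so the tensor product multiplicity $[\operatorname{Ind}_{P_1}^G(\xi_1) \otimes \operatorname{Ind}_{P_2}^G(\xi_2) : \Pi]$ equals the branching multiplicity $[\operatorname{Ind}_{P_1 \times P_2}^{G \times G}(\xi_1 \boxtimes \xi_2)|_{\operatorname{diag} G} : \Pi]$ for $\Pi \in \operatorname{Irr}(G) \simeq \operatorname{Irr}(\operatorname{diag} G)$.

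First, I would apply Theorem \ref{thm:introQsph2} to the ambient pair $(G \times G, \operatorname{diag} G)$ with parabolic subgroup $P := P_1 \times P_2$ of $G \times G$; both groups are real reductive algebraic, so the hypothesis is met. Then I would match up, one by one, the four conditions from the specialized theorem with the four conditions (i)--(iv) of Theorem \ref{thm:fmtensor}, using the following structural identifications: every one-dimensional representation of $P_1 \times P_2$ factors uniquely as $\chi_1 \boxtimes \chi_2$, giving $\operatorname{Char}(P_1 \times P_2) \simeq \operatorname{Char}(P_1) \times \operatorname{Char}(P_2)$; every irreducible finite-dimensional representation of $P_1 \times P_2$ is an outer tensor product $\xi_1 \boxtimes \xi_2$ with $\dim(\xi_1 \boxtimes \xi_2) = \dim \xi_1 \cdot \dim \xi_2$; the flag variety $(G \times G)_{\mathbb{C}}/(P_1 \times P_2)_{\mathbb{C}}$ equals $G_{\mathbb{C}}/(P_1)_{\mathbb{C}} \times G_{\mathbb{C}}/(P_2)_{\mathbb{C}}$; and the diagonal complexification and compact form satisfy $(\operatorname{diag} G)_{\mathbb{C}} = \operatorname{diag}(G_{\mathbb{C}})$ and $(\operatorname{diag} G)_U = \operatorname{diag}(G_U)$. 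Under these identifications, conditions (iii) and (iv) of the two theorems coincide verbatim, while (i) and (ii) coincide after taking the supremum over $(\chi_1, \chi_2)$ or $(\xi_1, \xi_2)$ separately.

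The most delicate structural step is the decomposition of irreducible finite-dimensional representations of $P_1 \times P_2$ as outer tensor products, which follows from a standard Schur/double centralizer argument over $\mathbb{C}$: the images of $P_1 \times \{e\}$ and $\{e\} \times P_2$ mutually centralize, and absolute irreducibility of one factor forces the product decomposition. A subtler point is verifying the displayed isomorphism between $\operatorname{Ind}_{P_1}^G(\xi_1) \otimes \operatorname{Ind}_{P_2}^G(\xi_2)$ and the restricted outer induction; for smooth admissible representations of moderate growth on nuclear Fr\'echet spaces this requires the completed projective tensor product, but the identification is classical in parabolic induction. Once these structural identifications are in hand, the four equivalences of Theorem \ref{thm:fmtensor} follow immediately from the corresponding four equivalences of Theorem \ref{thm:introQsph2}.
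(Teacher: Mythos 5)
Your proposal is correct, but it does not follow the paper's own route: in this article Theorem \ref{thm:fmtensor} is not proved at all, but is quoted from \cite[Cor.\ 4.10]{K22}, and in that reference both Theorem \ref{thm:introQsph2} and Theorem \ref{thm:fmtensor} are obtained as special cases of a single more general bounded-multiplicity theorem \cite[Thm.\ 1.4]{K22} rather than one being deduced from the other. What you do instead is derive the tensor-product statement from the restriction statement by applying Theorem \ref{thm:introQsph2} to the symmetric pair $(G\times G,\operatorname{diag}G)$ with parabolic $P_1\times P_2$, using $\operatorname{Char}(P_1\times P_2)\simeq\operatorname{Char}(P_1)\times\operatorname{Char}(P_2)$, the outer-tensor decomposition $\operatorname{Irr}(P_1\times P_2)_f\simeq\operatorname{Irr}(P_1)_f\times\operatorname{Irr}(P_2)_f$ with $\dim(\xi_1\boxtimes\xi_2)=\dim\xi_1\dim\xi_2$, the identification $(\operatorname{diag}G)_{\mathbb{C}}=\operatorname{diag}(G_{\mathbb{C}})$, $(\operatorname{diag}G)_U=\operatorname{diag}(G_U)$, and the isomorphism $\operatorname{Ind}_{P_1}^G(\xi_1)\otimes\operatorname{Ind}_{P_2}^G(\xi_2)\simeq\operatorname{Ind}_{P_1\times P_2}^{G\times G}(\xi_1\boxtimes\xi_2)|_{\operatorname{diag}G}$ (completed tensor product of nuclear Fr\'echet spaces, which you correctly flag). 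All four conditions then match term by term, and the discrepancy between the strict bound $<C\dim\xi$ in Theorem \ref{thm:introQsph2}(ii) and the bound $\le C\dim\xi_1\dim\xi_2$ here is absorbed into the constant. This reduction is exactly the mechanism the paper endorses elsewhere (the introduction's remark that tensor products are restrictions for $(G\times G,\operatorname{diag}G)$, and the proof of Theorem \ref{thm:Joseph}(2)), so your argument is a legitimate self-contained derivation; what it buys is independence from the external corollary, at the cost of taking Theorem \ref{thm:introQsph2} itself on faith from \cite{K22}.
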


\begin{remark}
\par\noindent(1)\enspace
A distinguished feature in Theorem \ref{thm:introQsph2} is 
 that the necessary and sufficient condition 
of the bounded multiplicity property
 is given only by the triple $({\mathfrak{g}}_{\mathbb{C}}, {\mathfrak{g}}_{\mathbb{C}}', {\mathfrak{p}}_{\mathbb{C}})$
 of complexified Lie algebras, 
 which traces back
 to \cite{K95, xktoshima}.  
\par\noindent(2)\enspace
For each complex symmetric pair $(G_{\mathbb{C}},G_{\mathbb{C}}')$, 
 parabolic subgroups $P_{\mathbb{C}}$ 
 satisfying the sphericity condition (iv) were classified in \cite{xhnoo}.  
See also \cite{xrims40, xtanaka12} 
 for some classification of strongly visible actions.  
\par\noindent(3)\enspace
Littelmann \cite{Li94} classified the pairs of parabolic subgroups $({P_1}_{\mathbb{C}}, {P_2}_{\mathbb{C}})$
 satisfying (iv) in Theorem \ref{thm:fmtensor} 
 under the assumption 
 that ${P_1}_{\mathbb{C}}$ and ${P_2}_{\mathbb{C}}$
 are maximal, 
 whereas all the pairs $({P_1}_{\mathbb{C}}, {P_2}_{\mathbb{C}})$
 satisfying the strong visibility condition (iii) in Theorem \ref{thm:fmtensor} were classified 
 in \cite{xrims40} for type A 
 and in Tanaka \cite{xtanaka12}
 for the other cases.  
\end{remark}

By Theorem \ref{thm:introQsph2}, 
 we are interested in the following question
 in connection with Theorem \ref{thm:mbrest}.  
\begin{question}
\label{q:para}
For which simple Lie group $G$, 
 does there exist a parabolic subgroup $P$
 such that $G_{\mathbb{C}}/P_{\mathbb{C}}$ is 
$G_U'$-strongly visible 
 (or equivalently, 
$G_{\mathbb{C}}'$-spherical)
 for {\it{all}} symmetric pairs $(G,G')$?
\end{question}

We give an affirmative answer 
 to this question
 if $G$ is the automorphism group
 of a para-Hermitian symmetric space.

Let $P=L N$ be a Levi decomposition
 of a parabolic subgroup $P$.  
Without loss of generality, 
 we may and do assume
 that both $G'$ and $L$ are stable 
 under the Cartan involution $\theta$ of $G$.  
We write $G_U$, $G_U'$, and $L_U$ 
 for the connected subgroups of $G_{\mathbb{C}}$
 with Lie algebra 
$
   {\mathfrak{g}}_U={\mathfrak{k}}+\sqrt{-1}{\mathfrak{p}}
$, 
$
   {\mathfrak{g}}_U':={\mathfrak{g}}_{\mathbb{C}}' \cap {\mathfrak{g}}_U
$, 
 and 
$
   {\mathfrak{l}}_U:={\mathfrak{l}}_{\mathbb{C}}\cap {\mathfrak{g}}_U
$.  
We note that $L_U = P_{\mathbb{C}} \cap G_U$.  
If the unipotent radical $N$ is abelian, 
 or equivalently, 
 if $(G,L)$ is a para-Hermitian symmetric pair, 
 then $G_U/L_U$ is a compact Hermitian symmetric space
 and the strong visibility condition (ii) in Theorem \ref{thm:introQsph2}
 for $G_U/L_U \simeq G_{\mathbb{C}}/P_{\mathbb{C}}$
 is satisfied
 for all symmetric pairs $(G_U, G_U')$
 by Fact \ref{fact:GKvisible}.  
Similarly
 the strong visibility condition (ii) in Theorem \ref{thm:fmtensor}
 for the tensor product case is satisfied 
 if the unipotent radicals of $P_1$ and $P_2$ are abelian
 \cite[Thm.\ 1.7]{visibleGK}.

Thus we have proved the following 
 in answer to Question \ref{q:para}:

\begin{corollary}
\label{cor:para}
Let $G$ be a non-compact simple Lie group, 
 and $G/L$ a para-Hermitian symmetric space.  
Then Theorem \ref{thm:mbrest} holds
 for any symmetric pair $(G,G')$
 by taking $\Pi$
 to be $\operatorname{Ind}_P^G(\xi)$ for $\xi \in \operatorname{Irr}(P)_f$.  
Likewise, 
 Theorem \ref{thm:tensor} holds 
 by taking $\Pi_1$ and $\Pi_2$ 
 to be $\operatorname{Ind}_P^G(\xi_1)$
 and $\operatorname{Ind}_P^G(\xi_2)$
 for $\xi_1, \xi_2 \in \operatorname{Irr}(P)_f$.  
\end{corollary}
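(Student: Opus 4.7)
The plan is to deduce both statements of the corollary from Theorems \ref{thm:introQsph2} and \ref{thm:fmtensor} by verifying the strong visibility hypothesis on the relevant flag variety in the para-Hermitian setting.

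For the restriction statement, I would first invoke Theorem \ref{thm:introQsph2} to replace the bounded multiplicity estimate by the geometric condition that $G_{\mathbb{C}}/P_{\mathbb{C}}$ be strongly $G_U'$-visible for every symmetric pair $(G,G')$. Writing $L_U = G_U \cap P_{\mathbb{C}}$ and identifying the generalized flag variety $G_{\mathbb{C}}/P_{\mathbb{C}} \simeq G_U/L_U$ in the standard way, the problem reduces to the action of the symmetric subgroup $G_U'$ on the compact homogeneous space $G_U/L_U$.

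The crucial point is that, since $(G,L)$ is assumed to be a para-Hermitian symmetric pair, the unipotent radical $N$ of $P$ is abelian; this is precisely the condition under which $G_U/L_U$ carries the structure of a compact Hermitian symmetric space. At that point Fact \ref{fact:GKvisible} applies verbatim and furnishes the required strong visibility of the $G_U'$-action on $G_U/L_U$ for every symmetric pair. Feeding this back into Theorem \ref{thm:introQsph2}(ii) yields the estimate $[\operatorname{Ind}_P^G(\xi)|_{G'}:\pi] \le C \dim \xi$ uniformly in $\pi \in \operatorname{Irr}(G')$, which for each fixed $\xi \in \operatorname{Irr}(P)_f$ is the bounded multiplicity property \eqref{eqn:bddrest} with $\Pi = \operatorname{Ind}_P^G(\xi)$.

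For the tensor product statement I would run exactly the same argument in the product setting, using Theorem \ref{thm:fmtensor} in place of Theorem \ref{thm:introQsph2}: the product $G_U/L_U \times G_U/L_U$ is again a compact Hermitian symmetric space, and the strong visibility of the diagonal $G_U$-action on it is supplied by \cite[Thm.\ 1.7]{visibleGK}. Feeding this into Theorem \ref{thm:fmtensor}(ii) gives the bound on $[\operatorname{Ind}_P^G(\xi_1) \otimes \operatorname{Ind}_P^G(\xi_2) : \Pi]$ and hence \eqref{eqn:bddtensor}. No serious obstacle arises, since all three ingredients are already available; the only piece of bookkeeping worth making explicit is the correspondence between symmetric pairs $(G,G')$ of the real form and symmetric pairs $(G_U,G_U')$ of the compact form, so that Fact \ref{fact:GKvisible}, formulated on compact-type Hermitian symmetric spaces, can in fact be invoked for each symmetric pair of the real group $G$.
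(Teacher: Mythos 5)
Your proposal is correct and follows essentially the same route as the paper: identify $G_{\mathbb{C}}/P_{\mathbb{C}}$ with $G_U/L_U$, observe that the abelian unipotent radical makes this a compact Hermitian symmetric space, apply Fact \ref{fact:GKvisible} (resp.\ \cite[Thm.\ 1.7]{visibleGK}) to get strong visibility for all symmetric pairs, and conclude via the equivalences in Theorem \ref{thm:introQsph2} (resp.\ Theorem \ref{thm:fmtensor}). The bookkeeping point you flag about matching symmetric pairs of $G$ with those of $G_U$ is exactly the step the paper handles by setting ${\mathfrak{g}}_U'={\mathfrak{g}}_{\mathbb{C}}'\cap{\mathfrak{g}}_U$ after arranging $G'$ and $L$ to be $\theta$-stable.
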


\subsection{Para-Hermitian symmetric spaces}

Kaneyuki--Kozai \cite{kaneyuki85} gave 
 a classification of para-Hermitian symmetric pairs $({\mathfrak{g}}, {\mathfrak{l}})$
 for simple Lie algebras ${\mathfrak{g}}$
 as in Table \ref{table:4.1} below.  

\begin{table}[H]
\begin{center}
\begin{tabular}{cc}
${\mathfrak{g}}$
&${\mathfrak{l}}$
\\
\hline
${\mathfrak{s l}}(p+q,{\mathbb{R}})$
&${\mathfrak{s l}}(p,{\mathbb{R}}) + {\mathfrak{s l}}(q,{\mathbb{R}})+ {\mathbb{R}}$
\\
${\mathfrak{s u}}^{\ast}(2p+2q)$
&${\mathfrak{s u}}^{\ast}(2p) + {\mathfrak{s u}}^{\ast}(2q)+{\mathbb{R}}$
\\
${\mathfrak{s l}}(p+q,{\mathbb{C}})$
&${\mathfrak{s l}}(p,{\mathbb{C}}) + {\mathfrak{s l}}(q,{\mathbb{C}})+ {\mathbb{C}}$
\\
${\mathfrak{s u}}(n,n)$
&${\mathfrak{s l}}(n,{\mathbb{C}}) + {\mathbb{R}}$
\\
${\mathfrak{s o}}(n,n)$
&${\mathfrak{s l}}(n,{\mathbb{R}}) + {\mathbb{R}}$
\\
${\mathfrak{s o}}^{\ast}(4n)$
&${\mathfrak{s u}}^{\ast}(2n) + {\mathbb{R}}$
\\
${\mathfrak{s o}}(2n,{\mathbb{C}})$
&${\mathfrak{s l}}(n,{\mathbb{C}}) +  {\mathbb{C}}$
\\
${\mathfrak{s o}}(p+1,q+1)$
&${\mathfrak{s o}}(p,q) + {\mathbb{R}}$
\\
${\mathfrak{s o}}(n+2,{\mathbb{C}})$
&${\mathfrak{s o}}(n,{\mathbb{C}}) + {\mathbb{C}}$
\\
${\mathfrak{s p}}(n,{\mathbb{R}})$
&${\mathfrak{s l}}(n,{\mathbb{R}}) + {\mathbb{R}}$
\\
${\mathfrak{s p}}(n,n)$
&${\mathfrak{s u}}^{\ast}(2n) + {\mathbb{R}}$
\\
${\mathfrak{s p}}(n,{\mathbb{C}})$
&${\mathfrak{s l}}(n,{\mathbb{C}}) + {\mathbb{C}}$
\\
${\mathfrak{e}}_{6(6)}$
&${\mathfrak{s o}}(5,5) + {\mathbb{R}}$
\\
${\mathfrak{e}}_{6(-26)}$
&${\mathfrak{s o}}(1,9) +  {\mathbb{R}}$
\\
${\mathfrak{e}}_{6,{\mathbb{C}}}$
&${\mathfrak{s o}}(10, {\mathbb{C}}) + {\mathbb{C}}$
\\
 ${\mathfrak{e}}_{7(7)}$
& ${\mathfrak{e}}_{6(6)}+{\mathbb{R}}$
\\
${\mathfrak{e}}_{7(-25)}$
& ${\mathfrak{e}}_{6(-26)}+{\mathbb{R}}$
\\ 
${\mathfrak{e}}_{7,{\mathbb{C}}}$
& ${\mathfrak{e}}_{6,{\mathbb{C}}}+{\mathbb{C}}$
\\
\end{tabular}
\end{center}
\caption{List of para-Hermitian symmetric pairs
 with ${\mathfrak{g}}$ simple}
\label{table:4.1}
\end{table}

In particular, 
 Theorem \ref{thm:mbrest} holds 
 if ${\mathfrak{g}}$ is in Table \ref{table:4.1}.  
Similarly, 
 Theorem \ref{thm:tensor}
 for the tensor product representations hold
 if ${\mathfrak{g}}$ is in Table \ref{table:4.1}
(see \cite[Cor.\ 4.11]{K22}).

\section{Restriction of \lq\lq{smallest}\rq\rq\ representations}
\label{sec:min}

This section provides
 a bounded multiplicity theorem
 for the restriction $\Pi|_{G'}$ 
 when the associated variety ${\mathcal{V}}(\operatorname{Ann} \Pi)$
 of $\Pi \in \operatorname{Irr}(G)$ is the closure
 of the complex minimal nilpotent orbits.  
The main result of this section
 is Theorem \ref{thm:Joseph}
 which was proved in \cite{tkVarnaMin}
 under the assumption
 that ${\mathfrak{g}}$ is absolutely simple.  
We shall see that the same line of argument works
 when ${\mathfrak{g}}$ is a complex simple Lie algebra.  
At the end of this section, 
 we give a proof of Theorems \ref{thm:mbrest} and \ref{thm:tensor}
 for simple Lie algebras ${\mathfrak{g}}$ 
except for ${\mathfrak{s p}}(p,q)$ and ${\mathfrak{f}}_{4(-20)}$.

\subsection{Complex minimal nilpotent orbits ${\mathbb{O}}_{\operatorname{min}, {\mathbb{C}}}$}

Let ${\mathfrak{g}}_{\mathbb{C}}$ be a complex simple Lie algebra.  
There exists a unique non-zero minimal nilpotent
 $(\operatorname{Int}{\mathfrak{g}}_{\mathbb{C}})$-orbit
 in ${\mathfrak{g}}_{\mathbb{C}}^{\ast}$, 
 which we denote by ${\mathbb{O}}_{\operatorname{min}, {\mathbb{C}}}$.  
We write
 $n({\mathfrak{g}}_{\mathbb{C}})$
 for half the (complex) dimension of ${\mathbb{O}}_{\operatorname{min}, {\mathbb{C}}}$. 
Here is the formula of $n({\mathfrak{g}}_{\mathbb{C}})$, 
 see \cite{C93} for example.

\begin{figure}[H]
\begin{center}
\begin{tabular}{c|ccccccccc}
${\mathfrak{g}}_{\mathbb{C}}$
& $A_n$
& $B_n$ $(n \ge 2)$
& $C_n$
& $D_n$
& ${\mathfrak{g}}_{2}^{\mathbb{C}}$
& ${\mathfrak{f}}_{4}^{\mathbb{C}}$
& ${\mathfrak{e}}_{6}^{\mathbb{C}}$
& ${\mathfrak{e}}_{7}^{\mathbb{C}}$
& ${\mathfrak{e}}_{8}^{\mathbb{C}}$
\\
\hline
$n({\mathfrak{g}}_{\mathbb{C}})$
& $n$
& $2n-2$
& $n$
& $2n-3$
& $3$
& $8$
& $11$
& $17$
& $29$
\\
\end{tabular}
\end{center}
\end{figure}

Let $G$ be a non-compact connected simple Lie group
with Lie algebra ${\mathfrak{g}}$.  
We set ${\mathfrak{g}}_{\mathbb{C}}:={\mathfrak{g}}\otimes_{\mathbb{R}} {\mathbb{C}}$.  
We note that ${\mathfrak{g}}_{\mathbb{C}}$ is simple
 if ${\mathfrak{g}}$ does not have a complex structure.  
For a complex simple Lie algebra ${\mathfrak{g}}$, 
 we set $n({\mathfrak{g}}_{\mathbb{C}}):= 2 n({\mathfrak{g}})$.  
To see its meaning, 
 we write $J$ for the complex structure on ${\mathfrak{g}}$, 
 and decompose ${\mathfrak{g}}_{\mathbb{C}}={\mathfrak{g}} \otimes_{\mathbb{R}}{\mathbb{C}}$
 into the direct sum of the eigenspaces
 ${\mathfrak{g}}^{\operatorname{hol}}$ and ${\mathfrak{g}}^{\operatorname{anti}}$
 of $J$
 with eigenvalues
 $\sqrt{-1}$ and $-\sqrt{-1}$, 
respectively.  
Then one has a direct sum decomposition:
\[
 {\mathfrak{g}} \oplus {\mathfrak{g}} \overset \sim \to
 {\mathfrak{g}}^{\operatorname{hol}} \oplus {\mathfrak{g}}^{\operatorname{anti}} ={\mathfrak{g}}_{\mathbb{C}}, 
\quad
  (X,Y) \mapsto \frac 1 2 (X-\sqrt{-1} J X, Y+\sqrt{-1}JY).  
\]
Accordingly, 
 the complexification $G_{\mathbb{C}}$
 of the complex Lie group $G$
 is given by the totally real embedding 
\begin{equation}
\label{eqn:cpxcpx} 
\operatorname{diag} \colon G \hookrightarrow G \times G=:G_{\mathbb{C}}, 
\end{equation}
where the second factor is equipped 
 with the reverse complex structure.  
In this case, 
 we set 
$
   {\mathbb{O}}_{\operatorname{min}, {\mathbb{C}}}:= 
{\mathbb{O}}_{\operatorname{min}} \times {\mathbb{O}}_{\operatorname{min}}
$
 where ${\mathbb{O}}_{\operatorname{min}}$ is the minimal nilpotent orbit
 for ${\mathfrak{g}}$.

\subsection{Real minimal nilpotent orbits}
\label{subsec:realmin}

Let $G$ be a connected non-compact simple Lie group.  
Denote by ${\mathcal{N}}$ the nilpotent cone
 in ${\mathfrak{g}}$, 
 and ${\mathcal{N}}/G$ the set of nilpotent orbits, 
 which may be identified with nilpotent coadjoint orbits 
 in ${\mathfrak{g}}^{\ast}$ via the Killing form.  
The finite set ${\mathcal{N}}/G$ is a poset 
 with respect to the closure ordering, 
 and there are at most two minimal elements
 in $({\mathcal{N}} \setminus \{0\})/G$, 
 which we refer to as {\it{real minimal nilpotent (coadjoint) orbits}}.  
See \cite{B98, C93, KO15, O15} and references therein.  
The relationship with the complex minimal nilpotent orbits
 ${\mathbb{O}}_{\operatorname{min}, {\mathbb{C}}}$
 in 
$
   {\mathfrak{g}}_{\mathbb{C}}
$
 is summarized as below.

\begin{fact}
[see {\it{e.g.}}, {\cite{O15}}]
\label{lem:CRmin}
Let ${\mathfrak{g}}={\mathfrak{k}}+{\mathfrak{p}}$
 be a Cartan decomposition of a simple Lie algebra ${\mathfrak{g}}$.  
Then exactly one of the following cases occurs.  
\newline
{\rm{(1)}}\enspace
$({\mathfrak{g}}, {\mathfrak{k}})$ is not of Hermitian type, 
 and ${\mathbb{O}}_{\operatorname{min}, {\mathbb{C}}} \cap {\mathfrak{g}}
=\emptyset$.  
\newline
{\rm{(2)}}\enspace
$({\mathfrak{g}}, {\mathfrak{k}})$ is not of Hermitian type, 
 and ${\mathbb{O}}_{\operatorname{min}, {\mathbb{C}}} \cap {\mathfrak{g}}$
 is a single orbit of $G$.  
\newline
{\rm{(3)}}\enspace
$({\mathfrak{g}}, {\mathfrak{k}})$ is of Hermitian type, 
 and ${\mathbb{O}}_{\operatorname{min}, {\mathbb{C}}} \cap {\mathfrak{g}}$
 consists of two orbits of $G$.  
\end{fact}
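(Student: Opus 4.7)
My plan is to invoke the Kostant--Sekiguchi correspondence, which is a bijection between the nilpotent $G$-orbits in $\mathfrak{g}$ and the nilpotent $K_{\mathbb{C}}$-orbits in $\mathfrak{p}_{\mathbb{C}}$ under which corresponding orbits sit inside the same complex nilpotent $G_{\mathbb{C}}$-orbit of $\mathfrak{g}_{\mathbb{C}}$. This reduces the problem to counting the $K_{\mathbb{C}}$-orbits in $\mathbb{O}_{\operatorname{min}, \mathbb{C}} \cap \mathfrak{p}_{\mathbb{C}}$, and I would organize the analysis around the $K_{\mathbb{C}}$-module structure of $\mathfrak{p}_{\mathbb{C}}$: recall that $\mathfrak{p}_{\mathbb{C}}$ is $K_{\mathbb{C}}$-irreducible exactly when $(\mathfrak{g}, \mathfrak{k})$ is not of Hermitian type, while in the Hermitian case $\mathfrak{p}_{\mathbb{C}} = \mathfrak{p}_+ \oplus \mathfrak{p}_-$ splits into two abelian $K_{\mathbb{C}}$-irreducible summands.

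For the Hermitian case, choose a positive system so that a highest root vector $E_\beta$ of $\mathfrak{g}_{\mathbb{C}}$ lies in $\mathfrak{p}_+$. The orbit $K_{\mathbb{C}} \cdot E_\beta$ is contained in $\mathbb{O}_{\operatorname{min}, \mathbb{C}}$ by definition, and it is the $K_{\mathbb{C}}$-orbit through a highest weight vector of the irreducible $K_{\mathbb{C}}$-module $\mathfrak{p}_+$. The reflected orbit $K_{\mathbb{C}} \cdot E_{-\beta}$ in $\mathfrak{p}_-$ provides a second $K_{\mathbb{C}}$-orbit inside $\mathbb{O}_{\operatorname{min}, \mathbb{C}}$, distinct from the first since $\mathfrak{p}_+ \cap \mathfrak{p}_- = \{0\}$. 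A dimension count using $\dim_{\mathbb{C}} \mathbb{O}_{\operatorname{min}, \mathbb{C}} = 2\,n(\mathfrak{g}_{\mathbb{C}})$ together with $\dim_{\mathbb{C}}(K_{\mathbb{C}} \cdot E_{\pm \beta}) = n(\mathfrak{g}_{\mathbb{C}})$ then shows that these two orbits exhaust $\mathbb{O}_{\operatorname{min}, \mathbb{C}} \cap \mathfrak{p}_{\mathbb{C}}$, and transporting back via Kostant--Sekiguchi yields exactly two real $G$-orbits, which gives (3).

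In the non-Hermitian case, the $K_{\mathbb{C}}$-irreducibility of $\mathfrak{p}_{\mathbb{C}}$ implies that $\mathbb{O}_{\operatorname{min}, \mathbb{C}} \cap \mathfrak{p}_{\mathbb{C}}$, when non-empty, coincides with the $K_{\mathbb{C}}$-orbit through a highest weight vector and is therefore a single $K_{\mathbb{C}}$-orbit; by Kostant--Sekiguchi this yields either zero or one real $G$-orbit. The remaining and most delicate step is to distinguish (1) from (2), that is, to decide for which non-Hermitian real forms a highest root vector of $\mathfrak{g}_{\mathbb{C}}$ is $G_{\mathbb{C}}$-conjugate to an element of $\mathfrak{g}$. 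I would reformulate this as asking whether the highest root of $\mathfrak{g}_{\mathbb{C}}$ descends to a restricted root with respect to a maximally split Cartan $\mathfrak{a} \subset \mathfrak{p}$, a condition which can be read off from the Satake diagram. A finite inspection across the classification then isolates $\mathfrak{sp}(p,q)$ and $\mathfrak{f}_{4(-20)}$ as the only non-Hermitian real forms falling in case (1); I expect this case-by-case verification to be the principal obstacle.
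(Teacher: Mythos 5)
First, note that the paper does not prove this statement: it is quoted as a known result, with the proof attributed to Okuda \cite{O15} (the equivalence underlying the dichotomy (1)/(2) is also recorded in the paper's own Fact \ref{fact:OKO}). Your overall strategy---pass to $K_{\mathbb{C}}$-orbits on ${\mathbb{O}}_{\operatorname{min},{\mathbb{C}}}\cap{\mathfrak{p}}_{\mathbb{C}}$ via Kostant--Sekiguchi and split according to the $K_{\mathbb{C}}$-module structure of ${\mathfrak{p}}_{\mathbb{C}}$---is the standard and correct one. But two of your key steps are not arguments as written. (a) The dimension count in the Hermitian case proves nothing: by Kostant--Rallis, \emph{every} $K_{\mathbb{C}}$-orbit in ${\mathbb{O}}_{\operatorname{min},{\mathbb{C}}}\cap{\mathfrak{p}}_{\mathbb{C}}$ has dimension $\tfrac12\dim_{\mathbb{C}}{\mathbb{O}}_{\operatorname{min},{\mathbb{C}}}=n({\mathfrak{g}}_{\mathbb{C}})$, so comparing dimensions cannot exclude a third orbit, say through some $X=X_++X_-$ with both components nonzero. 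You need to show ${\mathbb{O}}_{\operatorname{min},{\mathbb{C}}}\cap{\mathfrak{p}}_{\mathbb{C}}\subset{\mathfrak{p}}_+\cup{\mathfrak{p}}_-$; for instance, minimality gives $(\operatorname{ad}X)^2{\mathfrak{g}}_{\mathbb{C}}\subset{\mathbb{C}}X$, applying this to the characteristic element $Z$ forces $[X_+,X_-]=0$, and then if $X_-\neq0$ one checks $(\operatorname{ad}X_+)^2=0$ on all of ${\mathfrak{g}}_{\mathbb{C}}$, which is impossible for a nonzero nilpotent by Jacobson--Morozov; hence $X_+=0$ or $X_-=0$. (b) In the non-Hermitian case, \lq\lq{}${\mathfrak{p}}_{\mathbb{C}}$ irreducible implies the intersection is a single orbit\rq\rq{} is a non sequitur: for non-minimal orbits ${\mathbb{O}}$ the intersection ${\mathbb{O}}\cap{\mathfrak{p}}_{\mathbb{C}}$ routinely splits into several $K_{\mathbb{C}}$-orbits even when ${\mathfrak{p}}_{\mathbb{C}}$ is irreducible. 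The missing idea is that equidimensionality forces $\overline{K_{\mathbb{C}}\cdot X}=K_{\mathbb{C}}\cdot X\cup\{0\}$ for every $X\in{\mathbb{O}}_{\operatorname{min},{\mathbb{C}}}\cap{\mathfrak{p}}_{\mathbb{C}}$, so each such orbit projectivizes to a \emph{closed} $K_{\mathbb{C}}$-orbit in ${\mathbb{P}}({\mathfrak{p}}_{\mathbb{C}})$; an irreducible module has exactly one closed orbit (the highest weight orbit), and ${\mathfrak{p}}_+\oplus{\mathfrak{p}}_-$ has exactly two since the two highest weights are distinct. This single observation repairs both cases.

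Separately, your predicted outcome of the case-by-case check for distinguishing (1) from (2) is wrong. The non-compact, non-Hermitian real forms with ${\mathbb{O}}_{\operatorname{min},{\mathbb{C}}}\cap{\mathfrak{g}}=\emptyset$ are not only ${\mathfrak{sp}}(p,q)$ and ${\mathfrak{f}}_{4(-20)}$ but also ${\mathfrak{su}}^{\ast}(2n)$, ${\mathfrak{so}}(n-1,1)$ $(n\ge5)$ and ${\mathfrak{e}}_{6(-26)}$---this is exactly the list in Fact \ref{fact:OKO} (v). (For example, a nonzero quaternionic matrix has complex rank at least two, so ${\mathfrak{su}}^{\ast}(2n)={\mathfrak{sl}}(n,{\mathbb{H}})$ contains no rank-one nilpotent of ${\mathfrak{sl}}(2n,{\mathbb{C}})$.) You appear to have conflated case (1) with the two algebras ${\mathfrak{sp}}(p,q)$ and ${\mathfrak{f}}_{4(-20)}$ that remain at the end of Section \ref{sec:min}; those are singled out there for a different reason (they are in case (1) \emph{and} admit no Hermitian or para-Hermitian structure), not because they exhaust case (1). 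Finally, the Fact as stated also covers ${\mathfrak{g}}$ complex simple (where ${\mathfrak{g}}_{\mathbb{C}}\cong{\mathfrak{g}}\oplus{\mathfrak{g}}$ and ${\mathbb{O}}_{\operatorname{min},{\mathbb{C}}}={\mathbb{O}}_{\operatorname{min}}\times{\mathbb{O}}_{\operatorname{min}}$ by the paper's convention); your dichotomy should say a word about why such ${\mathfrak{g}}$ falls in case (2).
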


Correspondingly, 
 we write
$
   {\mathbb{O}}_{\operatorname{min}, {\mathbb{C}}} \cap {\mathfrak{g}}
= \{ {\mathbb{O}}_{\operatorname{min}, {\mathbb{R}}}\}
$ in Case (2)
 of Fact \ref{lem:CRmin}, 
$
    {\mathbb{O}}_{\operatorname{min}, {\mathbb{C}}} \cap {\mathfrak{g}}
   = \{ {\mathbb{O}}_{\operatorname{min}, {\mathbb{R}}}^+, {\mathbb{O}}_{\operatorname{min}, {\mathbb{R}}}^-\}$ in Case (3).  
Then they exhaust
 all real minimal nilpotent orbits
 in Cases (2) and (3).  
Real minimal nilpotent orbits are unique in Case (1), 
 to be denoted by ${\mathbb{O}}_{\operatorname{min}, {\mathbb{R}}}$.  
We set
\begin{equation}
\label{eqn:ng}
 m({\mathfrak{g}}):=
\begin{cases}
\frac 1 2 \dim {\mathbb{O}}_{\operatorname{min}, {\mathbb{R}}}
\quad
&\text{in Cases (1) and (2)}, 
\\
\frac 1 2 \dim {\mathbb{O}}_{\operatorname{min}, {\mathbb{R}}}^+
=\frac 1 2 \dim {\mathbb{O}}_{\operatorname{min}, {\mathbb{R}}}^-
\quad
&\text{in Case (3)}.  
\end{cases}
\end{equation}

\begin{equation*}
{\mathbb{O}}_{\operatorname{min}, {\mathbb{R}}}^{\mathbb{C}}
:=
\begin{cases}
\operatorname{Ad}(G_{\mathbb{C}}) {\mathbb{O}}_{\operatorname{min}, {\mathbb{R}}}
&\text{in Cases (1) and (2), }
\\
\operatorname{Ad}(G_{\mathbb{C}})
{\mathbb{O}}_{\operatorname{min}, {\mathbb{R}}}^+
=
\operatorname{Ad}(G_{\mathbb{C}})
{\mathbb{O}}_{\operatorname{min}, {\mathbb{R}}}^-
\quad
&\text{in Case (3).  }
\end{cases}
\end{equation*}
Then $m({\mathfrak{g}})=n({\mathfrak{g}}_{\mathbb{C}})$
 in Cases (2) and (3), 
 and $m({\mathfrak{g}})>n({\mathfrak{g}}_{\mathbb{C}})$ in Case (1).  
The formula of $m({\mathfrak{g}})$ in Case (1) is given in 
 \cite{O15} as follows.  

\begin{figure}[H]
\begin{center}
\begin{tabular}{c|ccccc}
${\mathfrak{g}}$
& ${\mathfrak{s u}}^{\ast}(2n)$
& ${\mathfrak{s o}}(n-1,1)$
& ${\mathfrak{s p}}(m,n)$
& ${\mathfrak{f}}_{4(-20)}$
& ${\mathfrak{e}}_{6(-26)}$
\\
\hline
$m({\mathfrak{g}})$
& $4n-4$
& $n-2$
& $2(m+n)-1$
& $11$
& $16$
\\
\end{tabular}
\end{center}
\end{figure}

Here is a summary
 about when $m({\mathfrak{g}})>n({\mathfrak{g}}_{\mathbb{C}})$.

\begin{fact}
[{\cite{B98}, \cite[Cor.\ 5.9]{KO15}, \cite[Prop.\ 4.1]{O15}}]
\label{fact:OKO}
Suppose that ${\mathfrak{g}}$ is absolutely simple.  
Then the following six conditions
 on ${\mathfrak{g}}$ 
 are equivalent:
\begin{enumerate}
\item[{\rm{(i)}}]
${\mathbb{O}}_{\operatorname{min}} \cap {\mathfrak{g}}
 = \emptyset$.  
\item[{\rm{(ii)}}]
${\mathbb{O}}_{\operatorname{min}, {\mathbb{C}}} \ne 
  {\mathbb{O}}_{\operatorname{min}, {\mathbb{R}}}^{\mathbb{C}}$.  
\item[{\rm{(iii)}}]
$\theta \beta \ne -\beta$.  
\item[{\rm{(iv)}}]
$m({\mathfrak{g}})>n({\mathfrak{g}}_{\mathbb{C}})$.  
\item[{\rm{(v)}}]
${\mathfrak{g}}$ is compact
 or is isomorphic to ${\mathfrak{su}}^{\ast}(2n)$, 
 ${\mathfrak{so}}(n-1,1)$ $(n \ge 5)$, 
 ${\mathfrak{sp}}(m,n)$, 
 ${\mathfrak{f}}_{4(-20)}$, 
 or ${\mathfrak{e}}_{6(-26)}$.  
\item[{\rm{(vi)}}]
${\mathfrak{g}}_{\mathbb{C}}={\mathfrak{k}}_{\mathbb{C}}$
 or the pair $({\mathfrak{g}}_{\mathbb{C}}, {\mathfrak{k}}_{\mathbb{C}})$
 is isomorphic to 
$({\mathfrak{s l}}(2n, {\mathbb{C}}), {\mathfrak{s p}}(n,{\mathbb{C}}))$, 
$({\mathfrak{s o}}(n, {\mathbb{C}}), {\mathfrak{s o}}(n-1,{\mathbb{C}}))$ 
 $(n \ge 5)$, 
$({\mathfrak{s p}}(m+n, {\mathbb{C}}), {\mathfrak{s p}}(m,{\mathbb{C}}) \oplus {\mathfrak{s p}}(n, {\mathbb{C}}))$, 
$({\mathfrak{f}}_4^{\mathbb{C}}, {\mathfrak{s o}}(9,{\mathbb{C}}))$, 
 or 
$({\mathfrak{e}}_{6}^{\mathbb{C}}, {\mathfrak{f}}_{4}^{\mathbb{C}})$.  
\end{enumerate}
\end{fact}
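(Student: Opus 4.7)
The plan is to establish the cycle of equivalences (i) $\Leftrightarrow$ (ii) $\Leftrightarrow$ (iii) $\Leftrightarrow$ (iv), and then verify the explicit classifications in (v) and (vi) by a Satake diagram analysis. Throughout, let $\beta$ denote the highest root of ${\mathfrak{g}}_{\mathbb{C}}$ relative to a $\theta$-stable maximally split Cartan subalgebra ${\mathfrak{t}}$, and recall that ${\mathbb{O}}_{\operatorname{min}, {\mathbb{C}}} = (\operatorname{Int}{\mathfrak{g}}_{\mathbb{C}}) \cdot E_{\beta}$, where $E_{\beta}$ is a highest root vector.

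First I would handle (i) $\Leftrightarrow$ (iii) using the standard fact that ${\mathbb{O}}_{\operatorname{min}, {\mathbb{C}}}$ meets ${\mathfrak{g}}$ if and only if $E_{\beta}$ can be conjugated into ${\mathfrak{g}}$, which in turn amounts to $\beta$ being a \emph{real root} of $({\mathfrak{g}}_{\mathbb{C}}, {\mathfrak{t}})$ relative to $\theta$, i.e. $\theta \beta = -\beta$; this is the classical Cayley-transform criterion for realizing the $({\mathfrak{s l}}_2)$-triple $\{E_{\beta}, H_{\beta}, E_{-\beta}\}$ over ${\mathbb{R}}$. For (i) $\Leftrightarrow$ (ii): if (i) fails, then ${\mathbb{O}}_{\operatorname{min}, {\mathbb{C}}} \cap {\mathfrak{g}}$ is a nonempty union of one or two $G$-orbits (as described in Fact \ref{lem:CRmin}) whose complexification recovers ${\mathbb{O}}_{\operatorname{min}, {\mathbb{C}}}$, forcing ${\mathbb{O}}_{\operatorname{min}, {\mathbb{R}}}^{\mathbb{C}} = {\mathbb{O}}_{\operatorname{min}, {\mathbb{C}}}$; conversely, if (i) holds, then the (necessarily larger) real minimal orbit sits inside the complexification of some strictly larger complex nilpotent orbit, yielding (ii). The implication (iii) $\Leftrightarrow$ (iv) then follows by dimension count: whenever a complex orbit has real points, a real form has half its real dimension equal to the complex dimension, so $m({\mathfrak{g}}) = n({\mathfrak{g}}_{\mathbb{C}})$ precisely when (i) fails.

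For (v) and (vi), I would inspect the Satake diagram of each non-compact real form of a complex simple Lie algebra and compute $\theta \beta$. Recall that $\theta$ acts on root space data through the Satake involution combined with sign change on the white (non-compact) nodes; the condition $\theta \beta = -\beta$ is equivalent to $\beta$ lying in the ${\mathbb{R}}$-span of the roots restricted from white nodes under the Satake involution. A direct case-by-case check, running through ${\mathfrak{s l}}$, ${\mathfrak{s o}}$, ${\mathfrak{s p}}$, and the five exceptional real forms, shows that $\theta \beta \neq -\beta$ holds exactly for the real forms listed in (v): ${\mathfrak{s u}}^{\ast}(2n)$, ${\mathfrak{s o}}(n-1,1)$ with $n \geq 5$, ${\mathfrak{s p}}(m,n)$, ${\mathfrak{f}}_{4(-20)}$, and ${\mathfrak{e}}_{6(-26)}$ (and trivially the compact ones). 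The equivalence (v) $\Leftrightarrow$ (vi) is then read off by taking $({\mathfrak{g}}_{\mathbb{C}}, {\mathfrak{k}}_{\mathbb{C}})$ for each entry, which yields the listed symmetric pairs; as a cross-check, the displayed table of values of $m({\mathfrak{g}})$ together with the formulas for $n({\mathfrak{g}}_{\mathbb{C}})$ from the preceding table confirms the strict inequality $m({\mathfrak{g}}) > n({\mathfrak{g}}_{\mathbb{C}})$ numerically in each case.

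The main obstacle is the classification step: the algebraic equivalences (i)--(iv) reduce cleanly to a single condition on $\beta$ and $\theta$, but the explicit list (v)--(vi) requires a somewhat tedious Satake diagram inspection that must track simultaneously the pattern of black and white nodes, the action of the Satake involution on $\beta$, and the identification of the corresponding complex symmetric pair $({\mathfrak{g}}_{\mathbb{C}}, {\mathfrak{k}}_{\mathbb{C}})$. This verification is carried out in the references \cite{B98, KO15, O15} already cited in the statement of the fact, and I would simply quote those enumerations rather than reproduce them in full.
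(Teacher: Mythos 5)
The paper does not actually prove Fact~\ref{fact:OKO}: it is quoted from \cite{B98}, \cite[Cor.\ 5.9]{KO15}, and \cite[Prop.\ 4.1]{O15}, and the Remark immediately following it explicitly notes that a proof of (i) $\Leftrightarrow$ (v) may be found in \cite{O15}. Your outline --- (i) $\Leftrightarrow$ (iii) via the highest root being a real root for a maximally split $\theta$-stable Cartan subalgebra, (i) $\Leftrightarrow$ (ii) by complexifying the real orbits, (iii) $\Leftrightarrow$ (iv) by the dimension count $\dim_{\mathbb{R}}\operatorname{Ad}(G)X=\dim_{\mathbb{C}}\operatorname{Ad}(G_{\mathbb{C}})X$ together with the fact that ${\mathbb{O}}_{\operatorname{min},{\mathbb{C}}}$ is the unique nonzero nilpotent orbit of minimal dimension, and a Satake-diagram case check for (v)--(vi) deferred to the cited references --- is correct and is essentially the argument of those sources, so it is consistent with the paper's own treatment.
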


\begin{remark}
The equivalence (i) $\iff$ (v) was stated 
 in \cite[Prop.\ 4.1]{B98}
 without proof.  
One may find a proof in \cite{O15}.  
\end{remark}

\subsection{Gelfand--Kirillov dimension}
\label{sec:GKdim}

The Gelfand--Kirillov dimension serves
 as a coarse measure of the \lq\lq{size}\rq\rq\
 of representations.  
Let $G$ be a real reductive Lie group.  
We recall from Section \ref{sec:2}
 that for $\Pi \in {\mathcal{M}}(G)$, 
we denote by $\operatorname{Ann}\Pi$ the annihilator of $\Pi$
 in the universal enveloping algebra $U({\mathfrak{g}}_{\mathbb{C}})$
 of the complexified Lie algebra ${\mathfrak{g}}_{\mathbb{C}}$.  
The associated variety ${\mathcal{V}}(\operatorname{Ann}\Pi)$ is 
 the closure of a single nilpotent coadjoint orbit
 in ${\mathfrak{g}}_{\mathbb{C}}^{\ast}$
 if $\Pi \in \operatorname{Irr}(G)$.  
The Gelfand--Kirillov dimension $\operatorname{DIM}(\Pi)$ of $\Pi$ 
 is defined to be half the dimension
 of ${\mathcal{V}}(\operatorname{Ann}\Pi)$.  
The same notation will be applied for Harish-Chandra modules
 of finite length.

By definition, 
 the Gelfand--Kirillov dimension has the following property:
\[
\operatorname{DIM}(\Pi)=0
\,\,\iff\,\,
\Pi
\text{ is finite-dimensional.  }
\]

For any infinite-dimensional $\Pi \in \operatorname{Irr}(G)$, 
 one has 
\begin{equation}
\label{eqn:GKbdd}
(n({\mathfrak{g}}_{\mathbb{C}})\le)
\,\,
m({\mathfrak{g}}) \le \operatorname{DIM}(\Pi).  
\end{equation}

\subsection{Coisotropic action on ${\mathbb{O}}_{\operatorname{min}, {\mathbb{C}}}$}
As we saw in Section \ref{subsec:coiso}, 
 any coadjoint orbit of a Lie group $G$
 is a Hamiltonian $G$-manifold
 with the Kirillov--Kostant--Souriau symplectic form.  
We consider the holomorphic setting, 
 and have proved in \cite[Thm.\ 23] 
{tkVarnaMin} the following:

\begin{fact}
\label{fact:22030921}
Let ${\mathbb{O}}_{\operatorname{min},{\mathbb{C}}}$ be the minimal nilpotent coadjoint orbit of a connected complex simple Lie group $G_{\mathbb{C}}$.  
\begin{enumerate}
\item[{\rm{(1)}}]
For any symmetric pair $(G_{\mathbb{C}}, K_{\mathbb{C}})$, 
 the $K_{\mathbb{C}}$-action 
 on ${\mathbb{O}}_{\operatorname{min},{\mathbb{C}}}$
 is coisotropic.  
\item[{\rm{(2)}}]
The diagonal action of $G_{\mathbb{C}}$
 on 
$
   {\mathbb{O}}_{\operatorname{min},{\mathbb{C}}} \times {\mathbb{O}}_{\operatorname{min},{\mathbb{C}}}
$
 is coisotropic.  
\end{enumerate}
\end{fact}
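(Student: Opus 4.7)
My plan is to verify both assertions via Lemma \ref{lem:1.7}, exploiting the contact $5$-grading
\[
\mathfrak{g}_{\mathbb{C}} = \mathfrak{g}_{-2} \oplus \mathfrak{g}_{-1} \oplus \mathfrak{g}_0 \oplus \mathfrak{g}_1 \oplus \mathfrak{g}_2
\]
attached to an $\mathfrak{sl}_2$-triple $\{E, H, F\}$ in which $E$ spans the highest root space $\mathfrak{g}_2$, so that $\mathbb{O}_{\operatorname{min},\mathbb{C}} = \operatorname{Ad}(G_{\mathbb{C}}) E$. The $\mathfrak{sl}_2$-representation theory applied to this grading yields
\[
[E, \mathfrak{g}_{\mathbb{C}}] = \mathbb{C} H \oplus \mathfrak{g}_1 \oplus \mathfrak{g}_2, \qquad \mathfrak{Z}_{\mathfrak{g}_{\mathbb{C}}}(E) = \mathfrak{Z}_{\mathfrak{g}_0}(E) \oplus \mathfrak{g}_1 \oplus \mathfrak{g}_2,
\]
together with an isomorphism $\operatorname{ad}(E)\colon \mathfrak{g}_{-1} \overset{\sim}{\to} \mathfrak{g}_1$ and the identity $\operatorname{ad}(E) F = H$.

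For part (1), let $\sigma$ be the involution with $K_{\mathbb{C}} = G_{\mathbb{C}}^{\sigma}$ and $\mathfrak{g}_{\mathbb{C}} = \mathfrak{k}_{\mathbb{C}} \oplus \mathfrak{p}_{\mathbb{C}}$ the corresponding eigenspace decomposition. Using the classification of $\theta$-stable $\mathfrak{sl}_2$-triples for minimal nilpotents, I would conjugate $E$ inside $\mathbb{O}_{\operatorname{min},\mathbb{C}}$ so that $\{E, H, F\}$ is $\sigma$-stable and the orbit $S := \operatorname{Ad}(K_{\mathbb{C}}) E$ is open in $\mathbb{O}_{\operatorname{min},\mathbb{C}}$. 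Since $\mathfrak{k}_{\mathbb{C}}^{\perp} = \mathfrak{p}_{\mathbb{C}}$ and $\mathfrak{Z}_{\mathfrak{g}_{\mathbb{C}}}(E)^{\perp} = [E, \mathfrak{g}_{\mathbb{C}}]$ with respect to the Killing form, the coisotropy condition in Lemma \ref{lem:1.7} reduces to
\[
\mathfrak{p}_{\mathbb{C}} \cap [E, \mathfrak{g}_{\mathbb{C}}] \;\subset\; [E, \mathfrak{k}_{\mathbb{C}}].
\]
I would verify this graded piece by graded piece: the $\mathbb{C} H \oplus \mathbb{C} E$ portion uses only $\sigma$-invariance of $H$ and $E$, while the $\mathfrak{g}_1$-portion is handled by combining the identity $\mathfrak{g}_1 \cap \mathfrak{p}_{\mathbb{C}} = \operatorname{ad}(E)(\mathfrak{g}_{-1} \cap \mathfrak{k}_{\mathbb{C}})$ (when $\sigma E = E$), or its dual when $\sigma E = -E$, with the isomorphism $\operatorname{ad}(E) \colon \mathfrak{g}_{-1} \overset{\sim}{\to} \mathfrak{g}_1$ and the $\sigma$-equivariance of this isomorphism.

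Part (2) follows by running the analogous argument for the pair $(G_{\mathbb{C}} \times G_{\mathbb{C}}, \operatorname{diag} G_{\mathbb{C}})$: take the slice $S := \operatorname{Ad}(K_{\mathbb{C}} \times K_{\mathbb{C}})(E, F)$ for $K_{\mathbb{C}}$ the complexification of a maximal compact form, establish openness of $\operatorname{Ad}(\operatorname{diag} G_{\mathbb{C}}) S$ by a dimension count mirroring \eqref{eqn:gskz}, and check that the orthogonal
$(\operatorname{diag}\mathfrak{g}_{\mathbb{C}} + \mathfrak{Z}_{\mathfrak{g}_{\mathbb{C}}}(E) \oplus \mathfrak{Z}_{\mathfrak{g}_{\mathbb{C}}}(F))^{\perp}$
lands in $[(E, F), \operatorname{diag}\mathfrak{g}_{\mathbb{C}}]$; this amounts to showing that any pair $(Z, -Z)$ with $Z \in [E, \mathfrak{g}_{\mathbb{C}}] \cap [F, \mathfrak{g}_{\mathbb{C}}]$ can be realized as $([E, W], [F, W])$ for some $W \in \mathfrak{g}_{\mathbb{C}}$, which follows from the $\mathfrak{sl}_2$ structure combined with the grading analysis. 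The main obstacle, in my view, is arranging the $\sigma$-normal form for the triple in (1), particularly in the non-split cases where $\sigma$ need not preserve the grading strictly; this is where the case-by-case work of \cite{tkVarnaMin} enters, and the extension to complex simple $\mathfrak{g}$ proceeds by running the same recipe after the identification \eqref{eqn:cpxcpx}.
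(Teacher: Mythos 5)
Your toolkit (Lemma \ref{lem:1.7} together with the $5$-grading attached to a highest-root $\mathfrak{sl}_2$-triple) is the right one --- it is exactly what the paper uses for the in-text analogues of this Fact, namely Theorem \ref{thm:220928} and Theorems \ref{thm:22040624}--\ref{thm:22100105}; the Fact itself is imported from the cited reference and not reproved in this paper. But your argument breaks at its first step: you assert that, after conjugation, $S:=\operatorname{Ad}(K_{\mathbb{C}})E$ is \emph{open} in $\mathbb{O}_{\operatorname{min},\mathbb{C}}$. This is false in general. Already for $(\mathfrak{g}_{\mathbb{C}},\mathfrak{k}_{\mathbb{C}})=(\mathfrak{sl}(2,\mathbb{C}),\mathfrak{so}(2,\mathbb{C}))$ one has $\dim K_{\mathbb{C}}=1<2=\dim\mathbb{O}_{\operatorname{min},\mathbb{C}}$, and for $(\mathfrak{so}(5,\mathbb{C}),\mathfrak{so}(4,\mathbb{C}))$ the generic $K_{\mathbb{C}}$-orbit is a hypersurface; in neither case is any $K_{\mathbb{C}}$-orbit open. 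Moreover, if your claim were true the rest of your proof would be empty: openness of $\operatorname{Ad}(K_{\mathbb{C}})E$ means $\mathfrak{k}_{\mathbb{C}}+\mathfrak{Z}_{\mathfrak{g}_{\mathbb{C}}}(E)=\mathfrak{g}_{\mathbb{C}}$, hence $(\mathfrak{k}_{\mathbb{C}}+\mathfrak{Z}_{\mathfrak{g}_{\mathbb{C}}}(E))^{\perp}=\{0\}$, and the inclusion you set out to verify graded piece by graded piece is vacuous. What Lemma \ref{lem:1.7} actually requires is a slice $S$, in general of positive dimension and not a single $K_{\mathbb{C}}$-orbit, whose \emph{sweep} $\operatorname{Ad}(K_{\mathbb{C}})S$ is open dense: in the paper's analogous proofs this is $\operatorname{Ad}(L)X$ for a Levi-type group $L$ normalizing the highest root space (Theorems \ref{thm:22040624} and \ref{thm:22100105}), or $\operatorname{Ad}(K_{\mathbb{C}})X$ with $K_{\mathbb{C}}$ the complexified \emph{maximal compact} rather than the symmetric subgroup (Theorem \ref{thm:220928}); for part (2) the correct slice also carries the twist $(\operatorname{Ad}(\ell)X,\operatorname{Ad}(\ell^{-1})Y)$, which your $\operatorname{Ad}(K_{\mathbb{C}}\times K_{\mathbb{C}})(E,F)$ misses.

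The choice of base point is also the wrong one. At a point with $\sigma E=E$ the map $\operatorname{ad}(E)$ is $\sigma$-equivariant, so $[E,\mathfrak{k}_{\mathbb{C}}]\cap\mathfrak{g}_1=[E,\mathfrak{k}_{\mathbb{C}}\cap\mathfrak{g}_{-1}]\subset\mathfrak{k}_{\mathbb{C}}\cap\mathfrak{g}_1$, whereas $(\mathfrak{k}_{\mathbb{C}}+\mathfrak{Z}_{\mathfrak{g}_{\mathbb{C}}}(E))^{\perp}$ contains $\mathfrak{p}_{\mathbb{C}}\cap\mathfrak{g}_1$, which is generally nonzero; so the condition \eqref{eqn:coiso} genuinely fails at such points, and your asserted identity $\mathfrak{g}_1\cap\mathfrak{p}_{\mathbb{C}}=\operatorname{ad}(E)(\mathfrak{g}_{-1}\cap\mathfrak{k}_{\mathbb{C}})$ is incorrect for the same reason. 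The workable base points are those placed \emph{anti}-invariantly, with $\sigma E$ in the lowest root space $\mathfrak{g}_{-2}$ --- this is the hypothesis $\sigma\mu=-\mu$ of Theorem \ref{thm:mgbdd} (resp.\ $\sigma\mathfrak{p}_{+}=\mathfrak{p}_{-}$ in Theorem \ref{thm:220928}), and it is what makes the key step $[E,\mathfrak{g}_{-2}]=\{[E,V+\sigma V]:V\in\mathfrak{g}_{-2}\}\subset[E,\mathfrak{g}_{\mathbb{C}}^{\sigma}]$ available. Note finally that the paper records complex symmetric pairs for which $\sigma\mu\neq-\mu$ (item (7) of the Remark following Remark \ref{rem:OK}, i.e.\ the pairs of Fact \ref{fact:OKO} (vi)), so even the corrected slice argument does not reach every case of the Fact; the full statement really does rest on the external reference and is not recovered by the normal-form recipe you sketch.
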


In Section \ref{sec:7}, 
 we give a generalization of this statement, 
 see Theorems \ref{thm:22040624} and \ref{thm:22100105}.  

\subsection{Bounded multiplicity theorems}

In view of the inequality \eqref{eqn:GKbdd}, 
 one may think of $\Pi \in \operatorname{Irr}(G)$ satisfying
 $\operatorname{DIM}(\Pi)=n({\mathfrak{g}}_{\mathbb{C}})$
 as the \lq\lq{smallest}\rq\rq\
 amongst infinite-dimensional irreducible representations of $G$.  
Minimal representations \cite{GS05, J79, Ta} are unitarizable
 and have this property.  
For $G=SL(n,{\mathbb{R}})$, 
 $SL(n,{\mathbb{C}})$, or $SU(p,q)$ $(p,q>0)$, 
 the Joseph ideal is not defined, 
 but there exist infinitely many irreducible unitary representations $\Pi$
 with $\operatorname{DIM}(\Pi)=n({\mathfrak{g}}_{\mathbb{C}})$.  
In general, 
 the coherent continuation of such representations
 obtained by the tensor product 
 with finite-dimensional representations also satisfy $\operatorname{DIM}(\Pi) =n({\mathfrak{g}}_{\mathbb{C}})$.

The restriction of such $\Pi$ to arbitrary symmetric pairs $(G,G')$
 has  a bounded multiplicity property
 as follows.  
\begin{theorem}
\label{thm:Joseph}
Let $G$ be a connected simple Lie group, 
 and $\Pi$, $\Pi_1$, $\Pi_2 \in \operatorname{Irr}(G)$. 
\begin{enumerate}
\item[{\rm{(1)}}]
If $\operatorname{DIM}(\Pi)=n({\mathfrak{g}}_{\mathbb{C}})$, 
 then  for any symmetric pair $(G,G')$, 
 one has
\[
\underset{\pi \in \operatorname{Irr}(G')}{\sup}[\Pi|_{G'}:\pi]<\infty.  
\]
\item[{\rm{(2)}}]
If $\operatorname{DIM}(\Pi_1)=\operatorname{DIM}(\Pi_2)= n({\mathfrak{g}}_{\mathbb{C}})$, 
 then one has
\[
\underset{\Pi \in \operatorname{Irr}(G)}{\sup}
 [{\Pi_1 \otimes \Pi_2}:{\Pi}] <\infty.  
\] 
\end{enumerate}
\end{theorem}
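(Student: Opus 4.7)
The strategy is to invoke Theorem~\ref{thm:Ki}: any $\Pi\in\operatorname{Irr}(G)$ with $\operatorname{DIM}(\Pi)=n({\mathfrak{g}}_{\mathbb{C}})$ has ${\mathcal{V}}(\operatorname{Ann}\Pi)=\overline{{\mathbb{O}}_{\operatorname{min},{\mathbb{C}}}}$ by \cite{BB82, Joseph85} together with the lower bound in \eqref{eqn:GKbdd}, so part~(1) reduces to coisotropy of the $G'_{\mathbb{C}}$-action on ${\mathbb{O}}_{\operatorname{min},{\mathbb{C}}}$, and part~(2), on viewing $\Pi_1\otimes\Pi_2$ as the restriction of $\Pi_1\boxtimes\Pi_2$ from $G\times G$ to $\operatorname{diag}G$, reduces to coisotropy of the diagonal $G_{\mathbb{C}}$-action on ${\mathbb{O}}_{\operatorname{min},{\mathbb{C}}}\times{\mathbb{O}}_{\operatorname{min},{\mathbb{C}}}$. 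When ${\mathfrak{g}}$ is absolutely simple these two coisotropy statements are exactly Fact~\ref{fact:22030921}(1) and (2), which is the argument of \cite{tkVarnaMin}; the remaining work is to run the same line of argument when ${\mathfrak{g}}$ is a complex simple Lie algebra.

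When ${\mathfrak{g}}$ carries a complex structure, \eqref{eqn:cpxcpx} identifies ${\mathfrak{g}}_{\mathbb{C}}={\mathfrak{g}}\oplus{\mathfrak{g}}$ and ${\mathbb{O}}_{\operatorname{min},{\mathbb{C}}}={\mathbb{O}}_{\operatorname{min}}\times{\mathbb{O}}_{\operatorname{min}}$, and any involution $\sigma$ of $G$ as a real Lie group is either complex linear or complex antilinear on ${\mathfrak{g}}$. In the holomorphic case, $G'=G^{\sigma}$ is a complex subgroup, the extension of $\sigma$ to $G_{\mathbb{C}}=G\times G$ is $\sigma\times\sigma$, and $G'_{\mathbb{C}}=G'\times G'$ acts componentwise on ${\mathbb{O}}_{\operatorname{min}}\times{\mathbb{O}}_{\operatorname{min}}$; each factor action is coisotropic by Fact~\ref{fact:22030921}(1) applied to the complex symmetric pair $({\mathfrak{g}},{\mathfrak{g}}')$, and the product inherits coisotropy because the Kirillov--Kostant--Souriau form on a product of coadjoint orbits is block-diagonal. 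In the antiholomorphic case, $G'=G^{\sigma}$ is a real form of the complex group $G$, its complexification coincides with $G$ itself, and the embedding $G'_{\mathbb{C}}=G\hookrightarrow G\times G=G_{\mathbb{C}}$ is the diagonal; the resulting diagonal $G$-action on ${\mathbb{O}}_{\operatorname{min}}\times{\mathbb{O}}_{\operatorname{min}}$ is coisotropic by Fact~\ref{fact:22030921}(2). In both subcases, Theorem~\ref{thm:Ki} delivers part~(1).

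For part~(2) in the complex simple case, the diagonal $G_{\mathbb{C}}$-action on ${\mathbb{O}}_{\operatorname{min},{\mathbb{C}}}\times{\mathbb{O}}_{\operatorname{min},{\mathbb{C}}}=({\mathbb{O}}_{\operatorname{min}})^{4}$, after regrouping the first with the third factor and the second with the fourth, is visibly a product of two diagonal $G$-actions on ${\mathbb{O}}_{\operatorname{min}}\times{\mathbb{O}}_{\operatorname{min}}$, each coisotropic by Fact~\ref{fact:22030921}(2); by the same block-diagonal principle, the product action is coisotropic, and Theorem~\ref{thm:Ki} yields the bounded multiplicity for the tensor product. The main obstacle I anticipate is the bookkeeping required to identify $G'_{\mathbb{C}}$ correctly inside $G_{\mathbb{C}}=G\times G$ in the antiholomorphic case, where the real form's complexification collapses onto the diagonal copy of $G$ rather than doubling up as in the holomorphic case; once this identification is settled, coisotropy of products of Hamiltonian actions is a formal consequence of the block-diagonal structure of the Kirillov--Kostant--Souriau form, and Theorem~\ref{thm:Ki} converts the geometry into the desired multiplicity bound.
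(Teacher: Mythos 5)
Your proposal is correct and follows essentially the same route as the paper: reduce via Theorem \ref{thm:Ki} to coisotropy of the $G_{\mathbb{C}}'$-action (resp.\ diagonal $G_{\mathbb{C}}$-action) on the minimal orbit (resp.\ its square), quote Fact \ref{fact:22030921} for absolutely simple $\mathfrak{g}$, and in the complex case split into holomorphic/anti-holomorphic involutions with $G'_{\mathbb{C}}\simeq G'\times G'$ or the (twisted) diagonal $\{(g,\sigma g)\}$ inside $G\times G$, exactly as in the paper's proof. The only cosmetic difference is that you make explicit the block-diagonal argument that a product of coisotropic Hamiltonian actions is coisotropic, which the paper leaves implicit.
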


\begin{remark}
\label{rem:5.6}
{\rm{(1)}}\enspace
When $(G,G')$ is a Riemannian symmetric pair, 
 namely, 
 $G'=K$, 
 Theorem \ref{thm:Joseph} (1)
 for minimal representations $\Pi$ is known 
 by Kostant
 in a stronger from that the supremum is one, 
 see \cite[Prop.\ 4.10]{GS05}.  
\par\noindent
{\rm{(2)}}\enspace
Theorem \ref{thm:Joseph} was proved 
in \cite[Thms.\ 7 and 8]  
{tkVarnaMin}
 by using Fact \ref{fact:22030921}
 when ${\mathfrak{g}}$ is absolutely simple.  
\end{remark}

\begin{remark}
We shall see in Theorem \ref{thm:mgbdd}
 and Remark \ref{rem:OK}
 that Theorem \ref{thm:Joseph} still holds
 by replacing $n({\mathfrak{g}}_{\mathbb{C}})$ with $m({\mathfrak{g}})$.  
\end{remark}

\begin{proof}[Proof of Theorem \ref{thm:Joseph}]
As we saw in Remark \ref{rem:5.6}, 
 it suffices to consider when $G$ is a complex Lie group.  
In this case
 there are two types of involutions $\sigma$ of $G$:
\newline\indent
(1)\enspace($\sigma$ is holomorphic)\quad\quad\,\,
$G^{\sigma}$ is a complex subgroup of $G$, 
\newline\indent
(2)\enspace($\sigma$ is anti-holomorphic)\enspace
$G^{\sigma}$ is a real form of $G$.

For simplicity, 
 suppose that $G'$ is the identity component of $G^{\sigma}$.  
Then via the identification 
 $G_{\mathbb{C}} \simeq G \times G$ in \eqref{eqn:cpxcpx}, 
 one has 
\begin{alignat*}{3}
G'_{\mathbb{C}} &\simeq && G' \times G'
&&\text{for (1)}, 
\\
G_{\mathbb{C}}' &\simeq && \operatorname{diag}_{\sigma}(G)
:=\{(g, \sigma g):g \in G\}
\quad
&&\text{for (2)}.  
\end{alignat*}

Then $G_{\mathbb{C}}'$ acts on ${\mathbb{O}}_{\operatorname{min}, {\mathbb{C}}} \times {\mathbb{O}}_{\operatorname{min}, {\mathbb{C}}}$
 coisotropically 
 in both cases (1) and (2)
 by Fact \ref{fact:22030921}
 (1) and (2), 
 respectively.  
This implies the first statement of Theorem \ref{thm:Joseph} by Theorem \ref{thm:Ki}.  
On the other hand, 
 $G_{\mathbb{C}} \times G_{\mathbb{C}}$ acts on 
 $({\mathbb{O}}_{\operatorname{min}, {\mathbb{C}}} \times {\mathbb{O}}_{\operatorname{min}, {\mathbb{C}}}) \times ({\mathbb{O}}_{\operatorname{min}, {\mathbb{C}}} \times {\mathbb{O}}_{\operatorname{min}, {\mathbb{C}}})$
 coisotropically
 by Fact \ref{fact:22030921} (2), 
 whence the second statement of Theorem \ref{thm:Joseph} follows.  
\end{proof}

\subsection{Proof of Theorems \ref{thm:mbrest}--\ref{thm:tensor}
 except ${\mathfrak{s p}}(p,q)$ and ${\mathfrak{f}}_{4(-20)}$}

In order to apply Theorem \ref{thm:Joseph}, 
 we need the existence of $\Pi \in \operatorname{Irr}(G)$
 satisfying $\operatorname{DIM}(\Pi)=n({\mathfrak{g}}_{\mathbb{C}})$.  
However, 
 we know from the inequality \eqref{eqn:GKbdd}
 that there is no such $\Pi$
 if $m({\mathfrak{g}}) > n({\mathfrak{g}}_{\mathbb{C}})$, 
 namely, 
 if ${\mathfrak{g}}$ is in the list of Fact \ref{fact:OKO} (v).  
The converse is not true, 
 but \lq\lq{almost}\rq\rq\ holds as follows.

\begin{lemma}
\label{lem:minrep}
Let $G$ be a simply-connected non-compact simple Lie group.  
Then there exist an infinite-dimensional irreducible and unitarizable representation $\Pi$ of $G$
 such that 
$
   \operatorname{DIM}(\Pi) = n({\mathfrak{g}}_{\mathbb{C}})
$, 
 if ${\mathfrak{g}}$ is not isomorphic to the following:

\begin{align*}
&{\mathfrak{s o}}(n,1)\,\, (n \ge 6),\quad
{\mathfrak{s o}}(p,q)\,\,(p,q \ge 4, p+q \,\text{odd}),\quad
\\
&
{\mathfrak{s u}}^{\ast}(2n), \quad
{\mathfrak{s p}}(p,q)\,\,(p,q \ge 1), \quad
{\mathfrak{e}}_{6(-26)}, \quad
{\mathfrak{f}}_{4(-20)}.  
\end{align*}
\end{lemma}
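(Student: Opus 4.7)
The plan is to establish the existence by appealing to known constructions of minimal unitary representations and organizing the argument according to the real form of ${\mathfrak{g}}$.  By the inequality \eqref{eqn:GKbdd}, the sought equality $\operatorname{DIM}(\Pi) = n({\mathfrak{g}}_{\mathbb{C}})$ forces $m({\mathfrak{g}}) = n({\mathfrak{g}}_{\mathbb{C}})$, and Fact \ref{fact:OKO} (v) then rules out exactly the families ${\mathfrak{s u}}^{\ast}(2n)$, ${\mathfrak{s o}}(n-1,1)$ $(n \ge 5)$, ${\mathfrak{s p}}(m,n)$, ${\mathfrak{f}}_{4(-20)}$, and ${\mathfrak{e}}_{6(-26)}$, which accounts for a large part of the excluded list.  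For every remaining non-compact simple real Lie algebra one has to exhibit an infinite-dimensional irreducible unitarizable $\Pi$ with $\operatorname{DIM}(\Pi)=n({\mathfrak{g}}_{\mathbb{C}})$, and I would do so case by case.

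If ${\mathfrak{g}}$ is of Hermitian type (the list \eqref{eqn:GHerm}), I would take $\Pi$ to be an irreducible unitary highest weight module of scalar type sitting at the first non-trivial reduction point of the Wallach set (the oscillator representation in the ${\mathfrak{s p}}(n,{\mathbb{R}})$ case, and the analogous minimal unitary highest weight modules due to Enright--Howe--Wallach, Jakobsen, and Vergne in the other cases); for ${\mathfrak{s u}}(p,q)$ with $p,q>0$ the Joseph ideal is not defined, but as noted just before Theorem \ref{thm:Joseph} infinitely many unitarizable ladder representations have the requisite Gelfand--Kirillov dimension.  If ${\mathfrak{g}}$ is a complex simple Lie algebra, then under the identification \eqref{eqn:cpxcpx} the minimal unitary representation of the complex simple Lie group $G$ constructed by Joseph and Torasso (or, for $G=SL(n,{\mathbb{C}})$, any of the infinitely many unitarizable representations alluded to before Theorem \ref{thm:Joseph}) has associated variety $\overline{{\mathbb{O}}_{\operatorname{min}} \times {\mathbb{O}}_{\operatorname{min}}}=\overline{{\mathbb{O}}_{\operatorname{min},{\mathbb{C}}}}$, and hence $\operatorname{DIM}(\Pi)=2n({\mathfrak{g}})=n({\mathfrak{g}}_{\mathbb{C}})$ as required.

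For the remaining non-Hermitian, non-complex real forms with $m({\mathfrak{g}})=n({\mathfrak{g}}_{\mathbb{C}})$ I would invoke the minimal unitary representations already available in the literature: for ${\mathfrak{s o}}(p,q)$ with $p+q$ even and $p+q \ge 8$ the representation of Kostant, Binegar--Zierau, and the author--{\O}rsted; for ${\mathfrak{s l}}(n,{\mathbb{R}})$ the unitary members of the degenerate principal series attached to a maximal parabolic subgroup whose Levi factor has abelian unipotent radical; and for the remaining non-compact exceptional real forms (${\mathfrak{g}}_{2(2)}$, ${\mathfrak{f}}_{4(4)}$, ${\mathfrak{e}}_{6(6)}$, ${\mathfrak{e}}_{6(2)}$, ${\mathfrak{e}}_{7(7)}$, ${\mathfrak{e}}_{7(-5)}$, ${\mathfrak{e}}_{8(8)}$, ${\mathfrak{e}}_{8(-24)}$) the minimal representations of Kostant, Brylinski--Kostant, Gross--Wallach, and Torasso.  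In each case the cited construction provides an irreducible unitarizable $\Pi$ whose annihilator in $U({\mathfrak{g}}_{\mathbb{C}})$ has associated variety $\overline{{\mathbb{O}}_{\operatorname{min},{\mathbb{C}}}}$.

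The main obstacle, and the real content of the proof, lies in the case-by-case bookkeeping needed to verify that the above constructions exhaust every simple real Lie algebra outside the excluded list.  The most delicate point is the family ${\mathfrak{s o}}(p,q)$ with $p+q$ odd and $p,q \ge 4$, for which $m({\mathfrak{g}})=n({\mathfrak{g}}_{\mathbb{C}})$ but the Joseph ideal admits no unitarization by the classical signature-character obstruction of Vogan; low-rank coincidences such as ${\mathfrak{s o}}(4,1) \simeq {\mathfrak{s p}}(1,1)$ and ${\mathfrak{s o}}(5,1) \simeq {\mathfrak{s u}}^{\ast}(4)$ are already absorbed into the families of Fact \ref{fact:OKO} (v), so no genuinely new exclusion is needed.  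Once this verification is assembled, the lemma follows, and Theorems \ref{thm:mbrest} and \ref{thm:tensor} for all simple ${\mathfrak{g}}$ outside ${\mathfrak{s p}}(p,q)$ and ${\mathfrak{f}}_{4(-20)}$ follow by combining it with Theorem \ref{thm:Joseph}.
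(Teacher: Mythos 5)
Your approach is essentially the paper's: the proof there is a three-line case check, taking $\Pi$ to be a degenerate principal series induced from a mirabolic subgroup for ${\mathfrak{s l}}(n,{\mathbb{F}})$ $({\mathbb{F}}={\mathbb{R}},{\mathbb{C}})$, a highest weight module of smallest Gelfand--Kirillov dimension for ${\mathfrak{s u}}(p,q)$, and a minimal representation in the sense of the classification of Tamori \cite{Ta} for all remaining real forms. Your first paragraph, explaining \emph{why} the listed algebras are excluded via \eqref{eqn:GKbdd} and Fact \ref{fact:OKO}, is context rather than content (the lemma asserts only existence outside the list; the non-existence discussion appears in the surrounding text, not in the lemma), but it is not wrong. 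The one substantive issue is that you correctly identify the case-by-case bookkeeping as the real content and then leave it incomplete: your explicit enumeration covers Hermitian forms, complex forms, ${\mathfrak{s l}}(n,{\mathbb{R}})$, ${\mathfrak{s o}}(p,q)$ with $p+q$ even, and the exceptional real forms, but misses ${\mathfrak{s o}}(p,3)$ with $p$ even and $p\ge 4$ (so $p+q$ odd but $\min(p,q)=3<4$), which lies outside the excluded list and so must be handled; minimal unitary representations do exist there, and the paper absorbs this case (and all similar ones) by citing the classification \cite{Ta} wholesale rather than enumerating constructions by hand. With that family added --- or with your list of ad hoc references replaced by a single appeal to the classification of minimal representations --- your argument closes.
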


\begin{proof}
When ${\mathfrak{g}}$ is of type A, 
 one may take $\Pi$
 to be a  degenerate principal series representation 
 induced from a mirabolic subgroup 
 for ${\mathfrak{g}}={\mathfrak{s l}}(n, {\mathbb{F}})$
 (${\mathbb{F}}={\mathbb{R}}, {\mathbb{C}}$), 
 and a highest weight module
 of the smallest Gelfand--Kirillov dimension
 for ${\mathfrak{g}}={\mathfrak{s u}}(p,q)$.  
When ${\mathfrak{g}}$ is not of type A, 
 one may take $\Pi$ to be a minimal representation \cite{Ta}.  
\end{proof}

\begin{proof}[Proof of Theorems \ref{thm:mbrest} and \ref{thm:tensor} except for 
${\mathfrak{g}}= {\mathfrak{s p}}(p,q)$ and ${\mathfrak{f}}_{4(-20)}$]
If ${\mathfrak{g}}$ is not in the list of Lemma \ref{lem:minrep}, 
 there exists $\Pi \in \operatorname{Irr}(G)$
 such that 
$\operatorname{DIM}(\Pi) = n ({\mathfrak{g}}_{\mathbb{C}})$.  
Hence Theorem \ref{thm:Joseph} applies.  
For ${\mathfrak{g}}={\mathfrak{s u}}^{\ast}(2n)$, ${\mathfrak{s o}}(p,q)$
 or ${\mathfrak{e}}_{6(-26)}$, 
 one sees from Table \ref{table:4.1}
 that $G$ is the transformation group 
 of a para-Hermitian symmetric space, 
hence Corollary \ref{cor:para} applies.  
\end{proof}

\section{Restriction of \lq\lq{small}\rq\rq\ representations}
\label{sec:7}

This section completes the proof of Theorems \ref{thm:mbrest}
 and \ref{thm:tensor}.  
As we have seen, 
 the remaining cases are 
 when ${\mathfrak{g}} = {\mathfrak{s p}}(p,q)$
 and ${\mathfrak{f}}_{4(-20)}$, 
 for which there is no $\Pi \in \operatorname{Irr}(G)$
with $\operatorname{DIM}(\Pi)=n({\mathfrak{g}}_{\mathbb{C}})$
 and for which $G$ does not admit a Hermitian or para-Hermitian symmetric space, 
hence none of Theorem \ref{thm:mfB}, 
 Corollary \ref{cor:para}, 
 or Theorem \ref{thm:Joseph} applies.  
By the classification of irreducible symmetric pairs (Berger \cite{Be57}), 
 we need to treat the following symmetric pairs
 $({\mathfrak{g}}, {\mathfrak{g}}')$:

\begin{table}[H]
\begin{center}
\begin{tabular}{c|c}
${\mathfrak{g}}$
&${\mathfrak{g}}'$
\\
\hline
${\mathfrak{s p}}(p,q)$
&${\mathfrak{u}}(p,q)$, ${\mathfrak{s p}}(p_1,q_1) + {\mathfrak{s p}}(p-p_1,q-q_1)$
\\
${\mathfrak{f}}_{4(-20)}$
&${\mathfrak{s o}}(9)$, ${\mathfrak{s o}}(8,1)$, ${\mathfrak{s p}}(2,1)+{\mathfrak{s p}}(1)$
\\
\end{tabular}
\end{center}
\caption{Remaining symmetric pairs}
\end{table}

The main results of this section is Theorem \ref{thm:mgbdd}, 
 which guarantees 
 the bounded multiplicity property
 for the restriction $\Pi|_{G'}$
 for any $\Pi \in \operatorname{Irr}(G)$ satisfies
 $\operatorname{DIM}(\Pi)=m({\mathfrak{g}})\,\,(> n({\mathfrak{g}}_{\mathbb{C}}
))$, 
 and we complete the proof of Theorems \ref{thm:mbrest} and \ref{thm:tensor}
 in the end.

\subsection{Bounded multiplicity theorems}

Suppose that $(G,G')$ is a symmetric pair
 defined by an involution $\sigma$ of $G$.  
We use the same letter $\sigma$ 
 to denote its holomorphic extension
 to a simply connected complexification $G_{\mathbb{C}}$, 
 and also its differential.  
We set ${\mathfrak{g}}^{-\sigma}:=\{Y \in {\mathfrak{g}}: \sigma Y=-Y \}$.  
We take a Cartan involution $\theta$
 commuting with $\sigma$, 
 and write ${\mathfrak{g}}={\mathfrak{k}}+{\mathfrak{p}}$
 for the Cartan decomposition.  
We take a maximal split abelian subspace ${\mathfrak{a}}$ to be $\sigma$-split, 
 namely, 
 ${\mathfrak{a}}^{-\sigma}:={\mathfrak{a}} \cap {\mathfrak{g}}^{-\sigma}$ is
 a maximal abelian subspace
 in ${\mathfrak{p}} \cap {\mathfrak{g}}^{-\sigma}$, 
 and $\Sigma^+({\mathfrak{g}}, {\mathfrak{a}})$
 to be compatible 
 with a positive system $\Sigma^+({\mathfrak{g}}, {\mathfrak{a}}^{-\sigma})$.  
Let $\mu$ be the highest element in $\Sigma^+({\mathfrak{g}}, {\mathfrak{a}})$. We prove:

\begin{theorem}
\label{thm:mgbdd}
Suppose $\Pi\in \operatorname{Irr}(G)$ satisfies
 $\operatorname{DIM}(\Pi)=m({\mathfrak{g}})$.  
If $\sigma \mu=-\mu$, 
 then the restriction $\Pi|_{G'}$ has 
 the bounded multiplicity property \eqref{eqn:bddrest}.  
\end{theorem}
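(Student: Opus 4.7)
The plan is to invoke the $\mathcal{D}$-module criterion (Theorem \ref{thm:Ki}) and to reduce the bounded multiplicity conclusion to a coisotropy statement on the associated variety. Since $\Pi \in \operatorname{Irr}(G)$, the associated variety $\mathcal{V}(\operatorname{Ann}\Pi)$ is the closure of a single $G_{\mathbb{C}}$-nilpotent coadjoint orbit, and the hypothesis $\operatorname{DIM}(\Pi)=m(\mathfrak{g})$ forces that orbit to have complex dimension $2m(\mathfrak{g})$. By the classification of such orbits (Fact \ref{fact:OKO}), it must be the complexification $\mathbb{O}_{\min,\mathbb{R}}^{\mathbb{C}}$ of a real minimal nilpotent orbit. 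It therefore suffices to prove that $G_{\mathbb{C}}^{\sigma}$ acts coisotropically on $\mathbb{O}_{\min,\mathbb{R}}^{\mathbb{C}}$.

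After identifying $\mathfrak{g}_{\mathbb{C}}^{\ast}\simeq\mathfrak{g}_{\mathbb{C}}$ via the Killing form, a standard representative of $\mathbb{O}_{\min,\mathbb{R}}^{\mathbb{C}}$ is any non-zero $X_{\mu}$ in the highest restricted root space $\mathfrak{g}_{\mu}$. The hypothesis $\sigma\mu=-\mu$ plays here exactly the role that $\sigma Z=-Z$ played in the Hermitian proof of Theorem \ref{thm:220928}(1): it implies $\sigma\mathfrak{g}_{\mu}=\mathfrak{g}_{-\mu}$, so that $Y_{\mu}:=c\,\sigma X_{\mu}\in\mathfrak{g}_{-\mu}$ for a suitable scalar $c$ together with $H_{\mu}:=[X_{\mu},Y_{\mu}]\in\mathfrak{a}$ forms an $\mathfrak{sl}_{2}$-triple in $\mathfrak{g}$. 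The crucial consequence is
\[
\sigma\,\mathfrak{Z}_{\mathfrak{g}_{\mathbb{C}}}(X_{\mu})\supset\sigma\mathfrak{g}_{\mu}=\mathfrak{g}_{-\mu},
\]
the direct analogue of $\sigma\mathfrak{Z}_{\mathfrak{g}_{\mathbb{C}}}(X)\supset\mathfrak{p}_{-}$ used there; note that $\mathfrak{g}_{\mu}$ is abelian (because $2\mu$ is not a restricted root), which supplies the analogue of $\mathfrak{p}_{+}\subset\mathfrak{Z}_{\mathfrak{g}_{\mathbb{C}}}(X)$.

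I would then apply Lemma \ref{lem:1.7} with slice $S:=\operatorname{Ad}(L_{\mathbb{C}})X_{\mu}$, where $L$ is (the identity component of) the centralizer of $\mathfrak{a}^{\sigma}$ in $G$, a $\sigma$-stable reductive subgroup containing the above $\mathfrak{sl}_{2}$-triple. The openness of $\operatorname{Ad}(G_{\mathbb{C}}^{\sigma})\cdot S$ in $\operatorname{Ad}(G_{\mathbb{C}})X_{\mu}$ reduces to the infinitesimal decomposition
\[
\mathfrak{g}_{\mathbb{C}}=\mathfrak{g}_{\mathbb{C}}^{\sigma}+\mathfrak{l}_{\mathbb{C}}+\mathfrak{Z}_{\mathfrak{g}_{\mathbb{C}}}(X_{\mu}),
\]
which follows from the restricted root decomposition, the displayed inclusion, and the $\sigma$-eigenspace splitting $\mathfrak{g}_{\mathbb{C}}=\mathfrak{g}_{\mathbb{C}}^{\sigma}\oplus\mathfrak{g}_{\mathbb{C}}^{-\sigma}$. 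The orthogonality requirement $(\mathfrak{g}_{\mathbb{C}}^{\sigma}+\mathfrak{Z}_{\mathfrak{g}_{\mathbb{C}}}(W))^{\perp}\subset[W,\mathfrak{g}_{\mathbb{C}}^{\sigma}]$ for every $W\in S$ is then established by the Killing-form and dimension-count argument patterned on \eqref{eqn:22102133}--\eqref{eqn:22102130}, in which the role of $\mathfrak{p}_{-}$ is taken by the negatively graded part of the Jacobson--Morozov grading determined by $H_{\mu}$, and $\sigma\mu=-\mu$ ensures that this negative part is interchanged by $\sigma$ with the positive part. An application of Theorem \ref{thm:Ki} then yields the asserted bounded multiplicity property.

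The main obstacle is precisely this last orthogonality/dimension comparison. Unlike in the Hermitian situation, $\mathfrak{Z}_{\mathfrak{g}_{\mathbb{C}}}(X_{\mu})$ is strictly larger than the abelian piece $\mathfrak{g}_{\mu}$, and its internal structure is governed by the (typically Heisenberg-type) Jacobson--Morozov parabolic attached to $X_{\mu}$; the verification therefore amounts to a graded-piece-by-graded-piece comparison, which works because every relevant piece is either $\sigma$-stable or is swapped with its negative by $\sigma$ under the hypothesis $\sigma\mu=-\mu$. A secondary but essential check is that for each symmetric pair $(\mathfrak{g},\mathfrak{g}')$ left open at the end of Section \ref{sec:min} the highest restricted root does satisfy $\sigma\mu=-\mu$; this can be verified directly from the explicit realizations of $\mathfrak{sp}(p,q)$ and $\mathfrak{f}_{4(-20)}$ and completes the proofs of Theorems \ref{thm:mbrest} and \ref{thm:tensor}.
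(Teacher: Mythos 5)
Your strategy coincides with the paper's: reduce via Theorem~\ref{thm:Ki} to the statement that $G_{\mathbb{C}}^{\sigma}$ acts coisotropically on ${\mathbb{O}}_{\operatorname{min},{\mathbb{R}}}^{\mathbb{C}}$, apply Lemma~\ref{lem:1.7} to a slice through a highest restricted root vector $X\in{\mathfrak{g}}({\mathfrak{a}};\mu)$, and exploit the fact that $\sigma\mu=-\mu$ turns $\{X,\sigma X,[X,\sigma X]\}$ into an ${\mathfrak{sl}}_2$-triple while ${\mathfrak{g}}({\mathfrak{a}};\pm\mu)$ is abelian. The genuine gap is that you stop exactly at the decisive step: you label the inclusion $({\mathfrak{g}}^{\sigma}+{\mathfrak{Z}}_{\mathfrak{g}}(W))^{\perp}\subset[W,{\mathfrak{g}}^{\sigma}]$ as ``the main obstacle'' and propose to settle it by a graded-piece-by-graded-piece comparison along the Jacobson--Morozov grading; that comparison is the content of the theorem, not a proof of it, and as described it is not clear it closes (you correctly note that ${\mathfrak{Z}}_{{\mathfrak{g}}}(X)$ now contains the extra piece ${\mathfrak{g}}_1$, but you do not say how its $\sigma$-image is absorbed). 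The paper disposes of this uniformly, with no case analysis: from ${\mathfrak{Z}}_{\mathfrak{g}}(X)={\mathfrak{Z}}_{\mathfrak{g}}({\mathfrak{sl}}_2^X)\oplus{\mathfrak{g}}_1\oplus{\mathfrak{g}}_2$ and ${\mathfrak{g}}_0={\mathfrak{Z}}_{\mathfrak{g}}({\mathfrak{sl}}_2^X)\oplus[X,{\mathfrak{g}}_{-2}]$ (Lemma~\ref{lem:22040704}) one gets ${\mathfrak{g}}={\mathfrak{g}}^{\sigma}+{\mathfrak{Z}}_{\mathfrak{g}}(X)+[X,{\mathfrak{g}}_{-2}]$, hence $({\mathfrak{g}}^{\sigma}+{\mathfrak{Z}}_{\mathfrak{g}}(X))^{\perp}\subset[X,{\mathfrak{g}}_{-2}]$ (Lemma~\ref{lem:22040623}); and then $[X,{\mathfrak{g}}_{-2}]=\{[X,V+\sigma V]:V\in{\mathfrak{g}}_{-2}\}\subset[X,{\mathfrak{g}}^{\sigma}]$ because $\sigma{\mathfrak{g}}_{-2}={\mathfrak{g}}_2$ is abelian, so $[X,\sigma V]=0$. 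The point is that the only graded piece surviving the orthogonal complement sits in $[X,{\mathfrak{g}}_{-2}]$, so ${\mathfrak{g}}_{\pm1}$ never has to be examined.

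A secondary defect is your slice. Taking $L$ to be the centralizer of ${\mathfrak{a}}^{\sigma}$ makes $L$ contain root spaces ${\mathfrak{g}}({\mathfrak{a}};\alpha)$ with $\alpha|_{{\mathfrak{a}}^{\sigma}}=0$, so $S=\operatorname{Ad}(L_{\mathbb{C}})X_{\mu}$ does not stay inside ${\mathfrak{g}}({\mathfrak{a}};\mu)$ and the reduction of the orthogonality condition to the single point $W=X$ is no longer automatic. The paper uses $L=M\exp({\mathbb{R}}A_{\mu})$, which preserves ${\mathfrak{g}}({\mathfrak{a}};\mu)$, so $S\subset{\mathfrak{g}}({\mathfrak{a}};\mu)$ and checking at $W=X$ suffices, while the openness of $\operatorname{Ad}(G^{\sigma})S$ is exactly the first equality ${\mathfrak{g}}={\mathfrak{g}}^{\sigma}+({\mathfrak{m}}+{\mathbb{R}}A_{\mu})+{\mathfrak{Z}}_{\mathfrak{g}}(X)$ of Lemma~\ref{lem:22040623}. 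Finally, your closing check that $\sigma\mu=-\mu$ holds for the pairs of Table~2 is correct but belongs to the deduction of Theorems~\ref{thm:mbrest}--\ref{thm:tensor} (Example~\ref{ex:6.2}), not to Theorem~\ref{thm:mgbdd}, which assumes $\sigma\mu=-\mu$ as a hypothesis; likewise the reduction to absolutely simple ${\mathfrak{g}}$ (the complex case being handled by Fact~\ref{fact:22030921}) is implicitly needed and not mentioned.
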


This theorem extends 
 \cite[Thm.\ 34] 
{tkVarnaMin}, 
 which treated $\sigma=\theta$ (Cartan involution)
 or its conjugation by $\operatorname{Int}({\mathfrak{g}}_{\mathbb{C}})$.

\begin{example}
\label{ex:6.2}
(1)\enspace
The assumption $\sigma \mu=-\mu$ in Theorem \ref{thm:mgbdd}
 is automatically satisfied
 if ${\mathfrak{a}}^{-\sigma}={\mathfrak{a}}$, 
 namely, 
 if $\operatorname{rank}_{\mathbb{R}} G/G'=\operatorname{rank}_{\mathbb{R}} G$.  
This is the case $({\mathfrak{g}}, {\mathfrak{g}}')=({\mathfrak{s p}}(p,q), {\mathfrak{u}}(p,q))$
 or ${\mathfrak{g}}={\mathfrak{f}}_{4(-20)}$.  
\newline
(2)\enspace
A direct computation shows $\sigma \mu=-\mu$
 for 
 $({\mathfrak{g}}, {\mathfrak{g}}')=({\mathfrak{s p}}(p,q), {\mathfrak{s p}}(p_1,q_1) + {\mathfrak{s p}}(p-p_1, q-q_1))$.  
\end{example}

\begin{remark}
\label{rem:OK}
(1)\enspace
If $m({\mathfrak{g}})=n({\mathfrak{g}}_{\mathbb{C}})$, 
  the conclusion of Theorem \ref{thm:mgbdd} holds 
 without the assumption $\sigma \mu=-\mu$, 
 see Theorem \ref{thm:Joseph}.  
\newline
(2)\enspace
Okuda \cite{OK22} verified 
 that the assumption $\sigma \mu=-\mu$ is satisfied
 for all symmetric pairs $({\mathfrak{g}}, {\mathfrak{g}}')$
 if ${\mathfrak{g}}$ is one of the five simple Lie algebras 
 in Fact \ref{fact:OKO} (v), 
 namely, 
 if $m({\mathfrak{g}}) > n({\mathfrak{g}}_{\mathbb{C}})$.  
\end{remark}

\begin{remark}
When $m({\mathfrak{g}})=n({\mathfrak{g}}_{\mathbb{C}})$, 
 it may happen 
 that $\sigma \mu \ne -\mu$.  
Here are examples of such symmetric pairs.  

\par\noindent
(1)\enspace
$({\mathfrak{s l}}(2n, {\mathbb{R}}), {\mathfrak{s p}}(n,{\mathbb{R}}))$

\par\noindent
(2)\enspace
$({\mathfrak{s u}}(2p, 2q), {\mathfrak{s p}}(p,q))$, 
$({\mathfrak{s u}}(n,n),{\mathfrak{s p}}(n,{\mathbb{R}}))$
\par\noindent
(3)\enspace
$({\mathfrak{s p}}(p+q,{\mathbb{R}}), {\mathfrak{s p}}(p,{\mathbb{R}})\oplus{\mathfrak{s p}}(q,{\mathbb{R}}))$, 
$({\mathfrak{s p}}(2n,{\mathbb{R}}), {\mathfrak{s p}}(n,{\mathbb{C}}))$, 

\par\noindent
(4)\enspace
$({\mathfrak{s o}}(p,q), {\mathfrak{s o}}(p-1,q))$
 or 
$({\mathfrak{s o}}(p,q),{\mathfrak{s o}}(p,q-1))$
 for \lq\lq{$p \ge q \ge 4$ and $p\equiv q \mod 2$}\rq\rq, 
 \lq\lq{$p \ge 5$ and $q=2$}\rq\rq,  
 or 
 \lq\lq{$p \ge 4$ and $q=3$}\rq\rq.  
\par\noindent
(5)\enspace
$({\mathfrak{f}}_{4(4)}, {\mathfrak{s o}}(5,4))$, 
\par\noindent
(6)\enspace
$({\mathfrak{e}}_{6(6)}, {\mathfrak{f}}_{4(4)})$, 
 $({\mathfrak{e}}_{6(2)}, {\mathfrak{f}}_{4(4)})$, 
 or 
$({\mathfrak{e}}_{6(-14)},{\mathfrak{f}}_{4(-20)})$, 
\par\noindent
(7)\enspace
complex symmetric pairs
 in Fact \ref{fact:OKO} (vi).

The condition $\sigma \mu \ne -\mu$ yields
 an interesting phenomenon 
 that the restriction
$\Pi|_{G'}$ stays almost irreducible
 (\cite[Thm.\ 10] 
{tkVarnaMin})
 for any $\Pi \in \operatorname{Irr}(G)$
 such that $\operatorname{DIM}(\Pi)=n({\mathfrak{g}}_{\mathbb{C}})$.  
 (Such $\Pi$ exists in the above cases (1)--(7).)  
This gives a uniform explanation
 of the phenomena
 that have been observed in various literatures, 
 for instance, 
 as a well-known property of the Segal--Shale--Weil representation
 of the metaplectic group
 for the pair (3), 
 the branching law of the minimal representation
 of $O(p,q)$
 in \cite[Thm.\ A]{KOr} for (4), 
 that of a degenerate principal series representation from a mirabolic 
 in \cite[Thm.\ 7.3]{KOPU09}
 for (1)
 (see also \cite{Clare12} for the complex case), 
 that of a cohomological parabolic induction $A_{\mathfrak{q}}(\lambda)$
 in \cite[Thm.\ 3.5]{K11Zuckerman} for (2), 
 and that of a minimal highest weight module
 in Binegar--Zierau \cite{BZ} 
 for $({\mathfrak{e}}_{6(-14)}, {\mathfrak{f}}_{4(-20)})$
 of (6), etc.  
\end{remark}

\subsection{Structural results on real minimal nilpotent orbits}

Retain the notation as in Section \ref{subsec:realmin}.  
The assumption $\operatorname{DIM}(\Pi)=m({\mathfrak{g}})$ means
 that the associated variety ${\mathcal{V}}(\operatorname{Ann}\Pi)$ of $\Pi$
 is the closure of ${\mathbb{O}}_{\operatorname{min}, {\mathbb{R}}}^{\mathbb{C}}$.  
In this section, 
 we recall some basic facts 
 on real minimal nilpotent orbits.

Let ${\mathfrak{g}}={\mathfrak{k}}+{\mathfrak{p}}$
 be a Cartan decomposition
 of a simple Lie algebra ${\mathfrak{g}}$.  
We take a maximal abelian subspace ${\mathfrak{a}}$
 of ${\mathfrak{p}}$, 
 and fix a positive system $\Sigma^+({\mathfrak{g}}, {\mathfrak{a}})$
 of the restricted root system $\Sigma({\mathfrak{g}}, {\mathfrak{a}})$.  
Let ${\mathfrak{m}}$ be
 the centralizer of ${\mathfrak{a}}$ in ${\mathfrak{k}}$.  
Denote by $\mu$
 the highest element 
 in $\Sigma^+({\mathfrak{g}}, {\mathfrak{a}})$, 
 and $A_{\mu} \in {\mathfrak{a}}$
 the coroot of $\mu$.  
Any (real) minimal nilpotent coadjoint orbit 
${\mathbb{O}}$
 is of the form 
 ${\mathbb{O}}=\operatorname{Ad}(G)X$
 for some non-zero element 
\[
   X \in {\mathfrak{g}}({\mathfrak{a}};\mu)
   :=
\{X \in {\mathfrak{g}}
:
[H,X]=\mu(H) X
\text{ for all $H \in {\mathfrak{a}}$}
\}
\]
via the identification 
 ${\mathfrak{g}}^{\ast} \simeq {\mathfrak{g}}$, 
 and vice versa
 ({\it{e.g.}}, \cite{O15}).  
Let $G_X$ be the stabilizer subgroup 
 of $X$ in $G$, 
 and ${\mathfrak{Z}}_{\mathfrak{g}}(X)$ its Lie algebra.  
We take $Y \in {\mathfrak{g}}({\mathfrak{a}};-\mu)$
 such that 
$\{X, A_{\mu}, Y\}$ forms
 an ${\mathfrak{s l}}_2$-triple.  
We write ${\mathfrak{s l}}_2^X$
 for the corresponding subalgebra
 in ${\mathfrak{g}}$.  
Since $\mu$ is the highest root
 in $\Sigma^+({\mathfrak{g}}, {\mathfrak{a}})$, 
 the representation theory of ${\mathfrak{s l}}_2({\mathbb{R}})$ tells us
 that possible eigenvalues 
 of $\operatorname{ad}(A_{\mu})$
 are $0$, $\pm 1$, or $\pm 2$. 
Hence one has the eigenspace decomposition 
 of $\operatorname{ad}(A_{\mu})$
 as 
\begin{equation}
\label{eqn:g012}
{\mathfrak{g}}
=
{\mathfrak{g}}_{-2}
\oplus
{\mathfrak{g}}_{-1}
\oplus
{\mathfrak{g}}_0
\oplus
{\mathfrak{g}}_1
\oplus
{\mathfrak{g}}_2, 
\end{equation}
where 
${\mathfrak{g}}_j:=\operatorname{Ker}
(\operatorname{ad}(A_{\mu})-j)$.  
We note 
 that ${\mathfrak{g}}_{\pm 2}={\mathfrak{g}}({\mathfrak{a}};\pm \mu)$.  
Let $a:=\dim_{\mathbb{R}}{\mathfrak{g}}_1$
 and $b:=\dim_{\mathbb{R}}{\mathfrak{g}}_2$.  
We denote by 
 ${\mathfrak{Z}}_{\mathfrak{g}}({\mathfrak{s l}}_2^X)$
 the centralizers of ${\mathfrak{s l}}_2^X$
 in ${\mathfrak{g}}$.

\begin{example}
\begin{enumerate}
\item[(1)]
For ${\mathfrak{g}}={\mathfrak{s p}}(p,q)$, 
 $a=4(p+q-2)$, $b=3$, 
$
{\mathfrak{Z}}_{\mathfrak{g}}({\mathfrak{s l}}_2^X)
 \simeq {\mathfrak{sp}}(p-1,q-1) \oplus {\mathbb{R}}$, 
 and ${\mathfrak{g}}_0 \simeq {\mathfrak{sp}}(p-1,q-1) \oplus {\mathfrak{sp}}(1)\oplus {\mathbb{R}}.
$

\item[(2)]
For ${\mathfrak{g}}={\mathfrak{f}}_{4(-20)}$, 
 one has $a=8$, $b=7$, 
 ${\mathfrak{Z}}_{\mathfrak{g}}({\mathfrak{s l}}_2^X)
 \simeq {\mathfrak{spin}}(6)$, 
 and ${\mathfrak{g}}_0 \simeq {\mathfrak{spin}}(7) \oplus {\mathbb{R}}$.  
\end{enumerate}
\end{example}

\begin{lemma}
\label{lem:22040704}
\begin{enumerate}
\item[{\rm{(1)}}]
The Lie algebra ${\mathfrak{g}}$ decomposes
as an ${\mathfrak{s l}}_2^{X}$-module:
\begin{equation}
\label{eqn:gsl2}
   {\mathfrak{g}}
  \simeq 
{\mathfrak{Z}}_{\mathfrak{g}}({\mathfrak{s l}}_2^X)
  \oplus a {\mathbb{R}}^2 \oplus b {\mathbb{R}}^3, 
\end{equation}
where ${\mathbb{R}}^2$ and ${\mathbb{R}}^3$
 stand for the natural representation
 and the adjoint representation of ${\mathfrak{s l}}_2({\mathbb{R}})$, 
 respectively.  
\item[{\rm{(2)}}]
One has a direct sum decomposition
 as a vector space:
\begin{equation}
\label{eqn:gX}
{\mathfrak{Z}}_{\mathfrak{g}}(X)
= {\mathfrak{Z}}_{\mathfrak{g}}
({\mathfrak{s l}}_2^X)
\oplus {\mathfrak{g}}_1
\oplus{\mathfrak{g}}_2.  
\end{equation}

\item[{\rm{(3)}}]
The dimension of the adjoint orbit
 $\operatorname{Ad}(G)X$ is equal to $a+2b$.  
\item[{\rm{(4)}}]
${\mathfrak{g}}_0
=
{\mathfrak{Z}}_{\mathfrak{g}}
({\mathfrak{s l}}_2^X)+({\mathfrak{m}} \oplus {\mathbb{R}}A_{\mu})$.  
\end{enumerate}
\end{lemma}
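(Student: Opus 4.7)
\textbf{Proof plan for Lemma \ref{lem:22040704}.}
The overall strategy is to view ${\mathfrak{g}}$ as a module over the three-dimensional subalgebra ${\mathfrak{s l}}_2^X$ and exploit the representation theory of ${\mathfrak{s l}}_2({\mathbb{R}})$. Because $\mu$ is the highest element of $\Sigma^+({\mathfrak{g}},{\mathfrak{a}})$, the operator $\operatorname{ad}(A_\mu)$ has eigenvalues in $\{-2,-1,0,1,2\}$ only, so every irreducible ${\mathfrak{s l}}_2^X$-summand of ${\mathfrak{g}}$ has dimension at most three, and the only possibilities are the trivial module ${\mathbb{R}}$, the standard module ${\mathbb{R}}^2$, and the adjoint module ${\mathbb{R}}^3$.

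For (1), I would decompose ${\mathfrak{g}}$ into ${\mathfrak{s l}}_2^X$-irreducibles; the sum of the trivial summands is exactly ${\mathfrak{Z}}_{\mathfrak{g}}({\mathfrak{s l}}_2^X)$. Weight-counting forces the multiplicities of ${\mathbb{R}}^2$ and ${\mathbb{R}}^3$ to equal $\dim {\mathfrak{g}}_1 = a$ and $\dim {\mathfrak{g}}_2 = b$ respectively, since an ${\mathbb{R}}^2$-summand contributes one dimension to weight $1$ and zero to weight $2$, while ${\mathbb{R}}^3$ contributes zero to weight $1$ and one to weight $2$. Part (2) is immediate: the kernel of the raising operator $\operatorname{ad}(X)$ on each summand is its highest-weight line, which sits in ${\mathfrak{Z}}_{\mathfrak{g}}({\mathfrak{s l}}_2^X)$, ${\mathfrak{g}}_1$, or ${\mathfrak{g}}_2$ according to the summand's dimension. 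Part (3) is the dimension count
\[
\dim \operatorname{Ad}(G)X = \dim{\mathfrak{g}} - \dim{\mathfrak{Z}}_{\mathfrak{g}}(X) = (c + 2a + 3b) - (c + a + b) = a + 2b,
\]
where $c := \dim {\mathfrak{Z}}_{\mathfrak{g}}({\mathfrak{s l}}_2^X)$.

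For (4), the inclusion ${\mathfrak{Z}}_{\mathfrak{g}}({\mathfrak{s l}}_2^X) + ({\mathfrak{m}} \oplus {\mathbb{R}}A_\mu) \subset {\mathfrak{g}}_0$ is clear, since both summands centralize $A_\mu$. For the reverse inclusion, by (1) it suffices to exhibit the $b$-dimensional $0$-weight part of the $b$ copies of ${\mathbb{R}}^3$ inside ${\mathfrak{m}} \oplus {\mathbb{R}}A_\mu$. This $0$-weight subspace equals $[X, {\mathfrak{g}}_{-2}]$, since $\operatorname{ad}(X)$ is an isomorphism from the $-2$-weight space to the $0$-weight space of each ${\mathbb{R}}^3$. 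I would then identify ${\mathfrak{g}}_{\pm 2}$ with the restricted root spaces ${\mathfrak{g}}({\mathfrak{a}};\pm\mu)$, using the standard fact that for the highest restricted root $\mu$ the Cartan integer $\beta(A_\mu)$ equals $\pm 2$ only when $\beta = \pm\mu$. Once this is in hand, $[X, {\mathfrak{g}}_{-2}] \subset [{\mathfrak{g}}({\mathfrak{a}};\mu), {\mathfrak{g}}({\mathfrak{a}};-\mu)] \subset {\mathfrak{m}} \oplus {\mathfrak{a}}$, and a one-line Killing-form computation (testing against $H' \in {\mathfrak{a}}$ and using $\operatorname{ad}$-invariance of $B$) shows that the ${\mathfrak{a}}$-component of any such bracket is forced to be proportional to $A_\mu$. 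This yields $[X,{\mathfrak{g}}_{-2}] \subset {\mathfrak{m}} \oplus {\mathbb{R}}A_\mu$, completing (4).

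The only slightly non-routine point is the identification ${\mathfrak{g}}_{\pm 2} = {\mathfrak{g}}({\mathfrak{a}}; \pm\mu)$, i.e.\ showing that no restricted root other than $\mu$ takes the value $2$ on $A_\mu$; once this structural input about highest roots is granted, everything else reduces to standard ${\mathfrak{s l}}_2$-bookkeeping and a Killing-form calculation.
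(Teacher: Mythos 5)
Your proposal is correct and follows essentially the same route as the paper: parts (1)--(3) are the same $\mathfrak{sl}_2$-weight bookkeeping, and for (4) you arrive at the same key identity $\mathfrak{g}_0=\mathfrak{Z}_{\mathfrak{g}}(\mathfrak{sl}_2^X)\oplus[X,\mathfrak{g}(\mathfrak{a};-\mu)]$ together with $[\mathfrak{g}(\mathfrak{a};\mu),\mathfrak{g}(\mathfrak{a};-\mu)]\subset\mathfrak{m}+\mathfrak{a}$. The only (immaterial) difference is the last step of (4): you pin the $\mathfrak{a}$-component down to $\mathbb{R}A_\mu$ by a Killing-form computation, whereas the paper instead observes that $\mathfrak{a}\subset\mathbb{R}A_\mu+\mathfrak{Z}_{\mathfrak{g}}(\mathfrak{sl}_2^X)$, which already suffices for the claimed equality.
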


\begin{proof}
The first two assertions are immediate consequences
 of the representation theory of ${\mathfrak{s l}}_2({\mathbb{R}})$, 
 whence the dimension formula of ${\mathfrak{g}}/{\mathfrak{Z}}_{\mathfrak{g}}(X)$.  
For the last assertion, 
 the inclusion
${\mathfrak{g}}_0
\supset
{\mathfrak{Z}}_{\mathfrak{g}}
({\mathfrak{s l}}_2^X)+({\mathfrak{m}} \oplus {\mathbb{R}}A_{\mu})$
 is obvious.  
By the irreducible decomposition \eqref{eqn:gsl2}
 of ${\mathfrak{g}}$ as an ${\mathfrak{s l}}_2^X$-module, 
 one sees
 that 
${\mathfrak{g}}_0= {\mathfrak{Z}}_{\mathfrak{g}}({\mathfrak{s l}}_2^X)
\oplus
[X, {\mathfrak{g}}({\mathfrak{a}};-\mu)]$.  
Since $[{\mathfrak{g}}({\mathfrak{a}};\mu),
{\mathfrak{g}}({\mathfrak{a}};-\mu)]
\subset {\mathfrak{m}}+{\mathfrak{a}}$
 and since ${\mathfrak{a}} \subset {\mathbb{R}} A_{\mu} + {\mathfrak{Z}}_{\mathfrak{g}}({\mathfrak{s l}}_2^X)$, 
 the opposite inclusion follows.  
\end{proof}

\subsection{Coisotropic actions of $G_{\mathbb{C}}^{\sigma}$ on ${\mathbb{O}}_{\operatorname{min},{\mathbb{R}}}^{\mathbb{C}}$.} 

The proof of Theorem \ref{thm:mgbdd} is reduced
 to the following geometric properties
 by Theorem \ref{thm:Ki}.  

\begin{theorem}
\label{thm:22040624}
Assume $\sigma$ satisfies $\sigma \mu=-\mu$
 as in Theorem \ref{thm:mgbdd}.   
Then the $G_{\mathbb{C}}^{\sigma}$-action
 on ${\mathbb{O}}_{\operatorname{min}, {\mathbb{R}}}^{\mathbb{C}}$ is coisotropic.  
\end{theorem}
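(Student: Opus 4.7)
The plan is to verify the hypothesis of Lemma~\ref{lem:1.7} for the $G_{\mathbb{C}}^{\sigma}$-action on ${\mathbb{O}}_{\operatorname{min},{\mathbb{R}}}^{\mathbb{C}} = \operatorname{Ad}(G_{\mathbb{C}})X$, where $X \in {\mathfrak{g}}({\mathfrak{a}};\mu)$ is a nonzero highest-root vector. The principal structural tool is the $\operatorname{ad}(A_{\mu})$-eigenspace decomposition \eqref{eqn:g012} of ${\mathfrak{g}}_{\mathbb{C}}$; the hypothesis $\sigma\mu = -\mu$ yields $\sigma({\mathfrak{g}}_{{\mathbb{C}},j}) = {\mathfrak{g}}_{{\mathbb{C}},-j}$ for every $j$ and $\sigma A_{\mu} = -A_{\mu}$. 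Combined with ${\mathfrak{g}}_{{\mathbb{C}},1}+{\mathfrak{g}}_{{\mathbb{C}},2}\subset {\mathfrak{Z}}_{{\mathfrak{g}}_{\mathbb{C}}}(X)$ from Lemma~\ref{lem:22040704}(2), this already gives ${\mathfrak{g}}_{{\mathbb{C}},-j}\subset {\mathfrak{g}}_{\mathbb{C}}^{\sigma}+{\mathfrak{Z}}_{{\mathfrak{g}}_{\mathbb{C}}}(X)$ for $j = 1,2$ by writing $Y \in {\mathfrak{g}}_{{\mathbb{C}},-j}$ as $(\sigma^{-1}Y + Y) - \sigma^{-1}Y$.

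I would take the slice $S := \operatorname{Ad}(L_{\mathbb{C}})X$, where $L_{\mathbb{C}}$ is the connected complex Levi with Lie algebra ${\mathfrak{g}}_{{\mathbb{C}},0}$ (the Jacobson--Morozov Levi at $X$). Openness and density of $\operatorname{Ad}(G_{\mathbb{C}}^{\sigma})S$ in ${\mathbb{O}}_{\operatorname{min},{\mathbb{R}}}^{\mathbb{C}}$ then reduces to the Lie algebra decomposition
\[
   {\mathfrak{g}}_{\mathbb{C}} = {\mathfrak{g}}_{\mathbb{C}}^{\sigma} + {\mathfrak{g}}_{{\mathbb{C}},0} + {\mathfrak{Z}}_{{\mathfrak{g}}_{\mathbb{C}}}(X),
\]
which is now immediate: ${\mathfrak{g}}_{{\mathbb{C}},0}$ covers the $j=0$ block, ${\mathfrak{Z}}_{{\mathfrak{g}}_{\mathbb{C}}}(X)$ covers $j = 1,2$, and ${\mathfrak{g}}_{\mathbb{C}}^{\sigma}+{\mathfrak{Z}}_{{\mathfrak{g}}_{\mathbb{C}}}(X)$ covers $j = -1,-2$ by the previous observation.

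The main step is the coisotropic inequality \eqref{eqn:coiso} at every $\lambda \in S$. Because $L_{\mathbb{C}}$ preserves the grading, writing $\lambda = \operatorname{Ad}(l)X$ and applying $\operatorname{Ad}(l^{-1})$ transports the condition at $\lambda$ to the same inequality at $X$ with $\sigma$ replaced by the involution $\sigma' := \operatorname{Ad}(l^{-1})\sigma\operatorname{Ad}(l)$, which still satisfies $\sigma'({\mathfrak{g}}_{{\mathbb{C}},j}) = {\mathfrak{g}}_{{\mathbb{C}},-j}$; so only $\lambda = X$ needs verification. Using that $B({\mathfrak{g}}_{{\mathbb{C}},i},{\mathfrak{g}}_{{\mathbb{C}},j}) = 0$ whenever $i+j\neq 0$, one computes
\[
   {\mathfrak{g}}_{\mathbb{C}}^{\sigma}+{\mathfrak{Z}}_{{\mathfrak{g}}_{\mathbb{C}}}(X)
   = {\mathfrak{g}}_{{\mathbb{C}},-2} \oplus {\mathfrak{g}}_{{\mathbb{C}},-1} \oplus \bigl(({\mathfrak{g}}_{{\mathbb{C}},0})^{\sigma}+{\mathfrak{Z}}_{{\mathfrak{g}}_{\mathbb{C}}}({\mathfrak{s l}}_2^X)\bigr) \oplus {\mathfrak{g}}_{{\mathbb{C}},1} \oplus {\mathfrak{g}}_{{\mathbb{C}},2},
\]
whose Killing perp sits inside ${\mathfrak{g}}_{{\mathbb{C}},0}$. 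The decisive identity is
\[
   {\mathfrak{Z}}_{{\mathfrak{g}}_{\mathbb{C}}}({\mathfrak{s l}}_2^X)^{\perp} = [X,{\mathfrak{g}}_{{\mathbb{C}},-2}]
   \quad \text{inside } {\mathfrak{g}}_{{\mathbb{C}},0},
\]
which I would establish by combining the direct orthogonality $B([X,Y],W) = B(Y,[W,X]) = 0$ for $Y\in {\mathfrak{g}}_{{\mathbb{C}},-2}$ and $W \in {\mathfrak{Z}}_{{\mathfrak{g}}_{\mathbb{C}}}({\mathfrak{s l}}_2^X)$, the injectivity of $\operatorname{ad}(X)\colon {\mathfrak{g}}_{{\mathbb{C}},-2}\to {\mathfrak{g}}_{{\mathbb{C}},0}$, and the dimension count from Lemma~\ref{lem:22040704}(1). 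It follows that the perp above is contained in $[X,{\mathfrak{g}}_{{\mathbb{C}},-2}]$, and this in turn lies in $[X,{\mathfrak{g}}_{\mathbb{C}}^{\sigma}]$, since for $Z\in {\mathfrak{g}}_{{\mathbb{C}},-2}$ the element $Z+\sigma Z \in {\mathfrak{g}}_{\mathbb{C}}^{\sigma}$ satisfies $[X,Z+\sigma Z]=[X,Z]$ (using $\sigma Z\in {\mathfrak{g}}_{{\mathbb{C}},2}\subset {\mathfrak{Z}}_{{\mathfrak{g}}_{\mathbb{C}}}(X)$).

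The principal obstacle is isolating and justifying the orthogonality identity ${\mathfrak{Z}}_{{\mathfrak{g}}_{\mathbb{C}}}({\mathfrak{s l}}_2^X)^{\perp} = [X,{\mathfrak{g}}_{{\mathbb{C}},-2}]$, which compresses the ${\mathfrak{s l}}_2^X$-structure into the precise Killing-orthogonality statement demanded by Lemma~\ref{lem:1.7}; once this is in hand, the five-step grading bookkeeping and the $\operatorname{Ad}(L_{\mathbb{C}})$-reduction are essentially routine.
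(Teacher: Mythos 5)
Your proof is correct and follows essentially the same route as the paper's: both verify the hypotheses of Lemma~\ref{lem:1.7} by means of the $\operatorname{ad}(A_{\mu})$-grading, the fact that $\sigma\mu=-\mu$ forces $\sigma({\mathfrak{g}}_j)={\mathfrak{g}}_{-j}$, the identity $[X,{\mathfrak{g}}_{-2}]={\mathfrak{Z}}_{\mathfrak{g}}({\mathfrak{s l}}_2^X)^{\perp}\cap{\mathfrak{g}}_0$, and the final observation $[X,Z]=[X,Z+\sigma Z]$ for $Z\in{\mathfrak{g}}_{-2}$. The only cosmetic differences are that you take the full Jacobson--Morozov Levi ${\mathfrak{g}}_{{\mathbb{C}},0}$ as the slice (the paper uses $L=M\exp({\mathbb{R}}A_{\mu})$ and packages the key decomposition into Lemma~\ref{lem:22040623}) and that you compute the orthogonal complement directly from the grading rather than by dualizing that lemma.
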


\begin{theorem}
\label{thm:22100105}
The diagonal action of $G_{\mathbb{C}}$
 on ${\mathbb{O}}_{\operatorname{min}, {\mathbb{R}}}^{\mathbb{C}} \times {\mathbb{O}}_{\operatorname{min}, {\mathbb{R}}}^{\mathbb{C}}$ is coisotropic.  
\end{theorem}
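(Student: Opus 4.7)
The plan is to invoke Lemma \ref{lem:1.7} with a slice constructed from the Levi of the Jacobson--Morozov parabolic attached to an ${\mathfrak{s l}}_2$-triple $\{X, A_\mu, Y\}$, in direct parallel with the proof of Theorem \ref{thm:220928}(2): here the $\operatorname{ad}(A_\mu)$-grading \eqref{eqn:g012} plays the role of the Hermitian gradation ${\mathfrak{g}}_{\mathbb{C}} = {\mathfrak{p}}_- \oplus {\mathfrak{k}}_{\mathbb{C}} \oplus {\mathfrak{p}}_+$, and Lemma \ref{lem:22040704}(2) substitutes for the abelianness of ${\mathfrak{p}}_{\pm}$.

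I fix $X \in {\mathfrak{g}}({\mathfrak{a}};\mu)$ and $Y \in {\mathfrak{g}}({\mathfrak{a}};-\mu)$ forming an ${\mathfrak{s l}}_2$-triple with $A_\mu$ (so that $X, Y \in {\mathbb{O}}_{\operatorname{min},{\mathbb{R}}}^{{\mathbb{C}}}$), let $L_{\mathbb{C}} \subset G_{\mathbb{C}}$ be the connected subgroup with Lie algebra $({\mathfrak{g}}_{\mathbb{C}})_0 = \operatorname{Ker}\operatorname{ad}(A_\mu)$, and take
\[
S := \operatorname{Ad}(L_{\mathbb{C}} \times L_{\mathbb{C}})(X, Y) \subset ({\mathfrak{g}}_{\mathbb{C}})_{2} \times ({\mathfrak{g}}_{\mathbb{C}})_{-2};
\]
this lies inside ${\mathbb{O}}_{\operatorname{min},{\mathbb{R}}}^{{\mathbb{C}}} \times {\mathbb{O}}_{\operatorname{min},{\mathbb{R}}}^{{\mathbb{C}}}$ because $L_{\mathbb{C}}$ preserves the grading factor-wise. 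Openness of $\operatorname{Ad}(\operatorname{diag}G_{\mathbb{C}}) S$ reduces to the vector-space identity
\[
{\mathfrak{g}}_{\mathbb{C}} \oplus {\mathfrak{g}}_{\mathbb{C}} = \operatorname{diag}{\mathfrak{g}}_{\mathbb{C}} + \bigl( ({\mathfrak{g}}_{\mathbb{C}})_0 \oplus ({\mathfrak{g}}_{\mathbb{C}})_0 \bigr) + \bigl( {\mathfrak{Z}}_{{\mathfrak{g}}_{\mathbb{C}}}(X) \oplus {\mathfrak{Z}}_{{\mathfrak{g}}_{\mathbb{C}}}(Y) \bigr),
\]
which I verify by decomposing $(A,B) \in {\mathfrak{g}}_{\mathbb{C}} \oplus {\mathfrak{g}}_{\mathbb{C}}$ along \eqref{eqn:g012} and subtracting $\operatorname{diag}(A_{-2} + A_{-1} + B_1 + B_2)$: the residuals land, by Lemma \ref{lem:22040704}(2), in $({\mathfrak{g}}_{\mathbb{C}})_0 + {\mathfrak{Z}}(X)$ and $({\mathfrak{g}}_{\mathbb{C}})_0 + {\mathfrak{Z}}(Y)$ respectively.

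For the coisotropic condition \eqref{eqn:coiso} at $(X',Y') \in S$, I use the Killing form $B \oplus B$ on ${\mathfrak{g}}_{\mathbb{C}} \oplus {\mathfrak{g}}_{\mathbb{C}}$ together with the standard identity ${\mathfrak{Z}}_{{\mathfrak{g}}_{\mathbb{C}}}(X')^{\perp} = [X', {\mathfrak{g}}_{\mathbb{C}}]$ to reformulate
\[
\bigl( \operatorname{diag}{\mathfrak{g}}_{\mathbb{C}} + {\mathfrak{Z}}_{{\mathfrak{g}}_{\mathbb{C}}}(X') \oplus {\mathfrak{Z}}_{{\mathfrak{g}}_{\mathbb{C}}}(Y') \bigr)^{\perp} \subset \bigl[ (X', Y'),\ \operatorname{diag}{\mathfrak{g}}_{\mathbb{C}} \bigr]
\]
as the equivalent algebraic claim: whenever $A, B \in {\mathfrak{g}}_{\mathbb{C}}$ satisfy $[X', A] = -[Y', B]$, one has $A - B \in {\mathfrak{Z}}_{{\mathfrak{g}}_{\mathbb{C}}}(X') + {\mathfrak{Z}}_{{\mathfrak{g}}_{\mathbb{C}}}(Y')$. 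Since $L_{\mathbb{C}} \times L_{\mathbb{C}}$ preserves the grading factor-wise, the inclusions $({\mathfrak{g}}_{\mathbb{C}})_{1} \oplus ({\mathfrak{g}}_{\mathbb{C}})_{2} \subset {\mathfrak{Z}}(X')$ and $({\mathfrak{g}}_{\mathbb{C}})_{-1} \oplus ({\mathfrak{g}}_{\mathbb{C}})_{-2} \subset {\mathfrak{Z}}(Y')$ from Lemma \ref{lem:22040704}(2) persist throughout $S$. Decomposing $A, B$ along \eqref{eqn:g012} and matching graded components of the equation, each $A_j - B_j$ lies in ${\mathfrak{Z}}(X') + {\mathfrak{Z}}(Y')$: for $|j| = 1, 2$ this is by those stabilizer inclusions, and for $j = 0$ it follows from the constraints $[X', A_0] = [Y', B_0] = 0$ extracted by matching degrees $\pm 2$. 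Lemma \ref{lem:1.7} then delivers coisotropy.

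The main obstacle is the uniformity of this verification over the whole slice $S$ rather than merely at $(X, Y)$; this is the reason for constructing $S$ via the grading-preserving Levi $L_{\mathbb{C}}$, which ensures that the structural inclusions involving $({\mathfrak{g}}_{\mathbb{C}})_{\pm j}$ continue to hold at every $(X', Y') \in S$.
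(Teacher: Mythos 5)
Your proposal is correct and takes essentially the same route as the paper: Lemma \ref{lem:1.7} applied to a slice built from a Levi subgroup of the Jacobson--Morozov parabolic, with the $\operatorname{ad}(A_\mu)$-grading \eqref{eqn:g012} and Lemma \ref{lem:22040704} supplying both the openness identity and the perpendicularity inclusion. The only cosmetic differences are that the paper uses the smaller twisted-diagonal slice $S=\{(\operatorname{Ad}(\ell)X,\operatorname{Ad}(\ell^{-1})Y):\ell\in M_{\mathbb{C}}\exp({\mathbb{C}}A_\mu)\}$ and bounds the orthogonal complement directly by $[X,{\mathfrak{g}}_{-2}]\cap[Y',{\mathfrak{g}}_{2}]$, landing in $[(X,Y'),\operatorname{diag}{\mathfrak{g}}]$ via the abelianness of ${\mathfrak{g}}_{\pm2}$, whereas you dualize to the equivalent claim $A-B\in{\mathfrak{Z}}_{{\mathfrak{g}}_{\mathbb{C}}}(X')+{\mathfrak{Z}}_{{\mathfrak{g}}_{\mathbb{C}}}(Y')$ and check it degree by degree --- the same facts arranged in a different order.
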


\begin{remark}
(1)\enspace
Theorem \ref{thm:22040624} generalizes \cite[Thm.~29]
{tkVarnaMin} 
 which treats the case $\sigma = \theta$
(Cartan involution).  
\newline
(2)\enspace
Theorem \ref{thm:22100105}
 generalizes Fact \ref{fact:22030921} (2)
 which treats the case 
 $m({\mathfrak{g}})=n({\mathfrak{g}}_{\mathbb{C}})$.  
\end{remark}

We take $X \in {\mathfrak{g}}({\mathfrak{a}};{\mu})$
 such that ${\mathbb{O}}_{\operatorname{min},{\mathbb{R}}}
 = \operatorname{Ad}(G)X$, 
 hence ${\mathbb{O}}_{\operatorname{min},{\mathbb{R}}}^{\mathbb{C}}
= \operatorname{Ad}(G_{\mathbb{C}})X$
 via the isomorphism ${\mathfrak{g}}_{\mathbb{C}}^{\ast} \simeq {\mathfrak{g}}_{\mathbb{C}}$.  

For the proof of Theorem \ref{thm:22040624}, 
 we begin with the following:

\begin{lemma}
\label{lem:22040623}
If $\sigma \mu=-\mu$
 then 
\[
   {\mathfrak{g}}
={\mathfrak{g}}^{\sigma}
  +({\mathfrak{m}}+{\mathbb{R}}A_{\mu})
 +{\mathfrak{Z}}_{\mathfrak{g}}(X)
={\mathfrak{g}}^{\sigma}+{\mathfrak{Z}}_{\mathfrak{g}}(X)+[X, {\mathfrak{g}}_{-2}].  
\]
\end{lemma}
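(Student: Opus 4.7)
My plan is to use the $\operatorname{ad}(A_{\mu})$-grading \eqref{eqn:g012} and show that each of the five graded pieces $\mathfrak{g}_j$ can be absorbed into the right-hand side of the first equality, after which the second equality drops out as a corollary of Lemma \ref{lem:22040704}. The crucial first step is to translate the hypothesis $\sigma\mu = -\mu$ into an action of $\sigma$ on the grading: since $\sigma$ commutes with the Cartan involution $\theta$ it preserves $\mathfrak{a}$, and since the coroot $A_{\mu}$ is determined from $\mu$ via the $\sigma$-invariant Killing form, one gets $\sigma A_{\mu} = -A_{\mu}$. Consequently $\sigma(\mathfrak{g}_j) = \mathfrak{g}_{-j}$ for every $j \in \{-2,-1,0,1,2\}$.

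Granting this, the first equality reduces to three easy observations. By Lemma \ref{lem:22040704}(2), $\mathfrak{g}_1 \oplus \mathfrak{g}_2 \subset \mathfrak{Z}_{\mathfrak{g}}(X)$. For any $Y \in \mathfrak{g}_{-1} \oplus \mathfrak{g}_{-2}$ I write $Y = (Y + \sigma Y) - \sigma Y$; the first summand lies in $\mathfrak{g}^{\sigma}$ while $\sigma Y \in \mathfrak{g}_1 \oplus \mathfrak{g}_2 \subset \mathfrak{Z}_{\mathfrak{g}}(X)$. Finally, Lemma \ref{lem:22040704}(4) gives $\mathfrak{g}_0 = \mathfrak{Z}_{\mathfrak{g}}(\mathfrak{sl}_2^X) + (\mathfrak{m} + \mathbb{R}A_{\mu}) \subset \mathfrak{Z}_{\mathfrak{g}}(X) + (\mathfrak{m} + \mathbb{R}A_{\mu})$. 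Summing over $j$, every element of $\mathfrak{g}$ lies in $\mathfrak{g}^{\sigma} + (\mathfrak{m} + \mathbb{R}A_{\mu}) + \mathfrak{Z}_{\mathfrak{g}}(X)$, and the reverse inclusion is trivial.

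For the second equality it suffices to show $\mathfrak{m} + \mathbb{R}A_{\mu} \subset \mathfrak{Z}_{\mathfrak{g}}(X) + [X, \mathfrak{g}_{-2}]$. Indeed, the proof of Lemma \ref{lem:22040704}(4) already produces the sharper decomposition $\mathfrak{g}_0 = \mathfrak{Z}_{\mathfrak{g}}(\mathfrak{sl}_2^X) \oplus [X, \mathfrak{g}_{-2}]$ of $\mathfrak{sl}_2^X$-modules, and $\mathfrak{Z}_{\mathfrak{g}}(\mathfrak{sl}_2^X) \subset \mathfrak{Z}_{\mathfrak{g}}(X)$ by Lemma \ref{lem:22040704}(2); since $\mathfrak{m} + \mathbb{R}A_{\mu} \subset \mathfrak{g}_0$, the desired inclusion follows and substituting into the first equality yields the second.

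The only genuinely non-routine input is the passage $\sigma \mu = -\mu \;\Rightarrow\; \sigma A_{\mu} = -A_{\mu}$, so that $\sigma$ flips the grading; once this is in place, the rest is bookkeeping with \eqref{eqn:g012}, Lemma \ref{lem:22040704}(2), and Lemma \ref{lem:22040704}(4).
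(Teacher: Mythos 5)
Your proof is correct and follows essentially the same route as the paper: both arguments use $\sigma A_\mu=-A_\mu$ to flip the grading \eqref{eqn:g012}, absorb $\mathfrak{g}_{\pm1}\oplus\mathfrak{g}_{\pm2}$ into $\mathfrak{g}^{\sigma}+\mathfrak{Z}_{\mathfrak{g}}(X)$ via $Y=(Y+\sigma Y)-\sigma Y$ together with Lemma \ref{lem:22040704}(2), and then handle $\mathfrak{g}_0$ and the second equality with the decomposition $\mathfrak{g}_0=\mathfrak{Z}_{\mathfrak{g}}(\mathfrak{sl}_2^X)\oplus[X,\mathfrak{g}_{-2}]$ from Lemma \ref{lem:22040704}(4). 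The only (harmless) looseness is attributing $\sigma(\mathfrak{a})=\mathfrak{a}$ to $\sigma\theta=\theta\sigma$ alone; it really comes from the choice of $\mathfrak{a}$ as a $\sigma$-split (hence $\sigma$-stable) maximal abelian subspace, which is implicit in the hypothesis $\sigma\mu=-\mu$ making sense.
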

\begin{proof}
Since $\sigma(A_{\mu})=-A_{\mu}$, 
 one has 
$\sigma({\mathfrak{g}}_j)={\mathfrak{g}}_{-j}$
 ($j=0,1,2$).  
We set $Y:= \sigma X \in {\mathfrak{g}}_{-2} = {\mathfrak{g}}({\mathfrak{a}};-\mu)$.  
Then $[X,Y] \in {\mathfrak{g}}({\mathfrak{a}}; 0) \cap {\mathfrak{g}}^{-\sigma}
={\mathfrak{a}}^{-\sigma}$.  
Hence, 
 after an appropriate normalization, 
 $\{X, \sigma X, [X, \sigma X]\}$ forms an ${\mathfrak{sl}}_2$-triple.  
In particular, 
 $[X, {\mathfrak{g}}_{-2}]=[Y, {\mathfrak{g}}_2]$ is $\sigma$-stable.  
Since ${\mathfrak{Z}}_{\mathfrak{g}}(X)\supset {\mathfrak{g}}_1 \oplus {\mathfrak{g}}_2$
 by Lemma \ref{lem:22040704} (2), 
 the decomposition \eqref{eqn:g012} yields
\begin{align*}
   {\mathfrak{g}}
=& \sigma({\mathfrak{Z}}_{\mathfrak{g}}(X)) + {\mathfrak{g}}_0 + {\mathfrak{Z}}_{\mathfrak{g}}(X)
\\
=&{\mathfrak{g}}^{\sigma} + {\mathfrak{g}}_0 +{\mathfrak{Z}}_{\mathfrak{g}}(X).
\end{align*}
By Lemma \ref{lem:22040704} (4), 
 the first equality of Lemma \ref{lem:22040623} follows 
 because ${\mathfrak{Z}}_{\mathfrak{g}}(X) \supset {\mathfrak{Z}}_{\mathfrak{g}}({\mathfrak{sl}}_2^X)$.  
Since ${\mathfrak{g}}_0 + {\mathfrak{Z}}_{\mathfrak{g}}(X)
=[X, {\mathfrak{g}}_{-2}] + {\mathfrak{Z}}_{\mathfrak{g}}(X)$
 by the irreducible decomposition \eqref{eqn:gsl2}, 
 the second equality holds.  
\end{proof}

We now give a proof of Theorems \ref{thm:22040624} and \ref{thm:22100105}.  
For a complex simple Lie algebra ${\mathfrak{g}}$, 
 the statement is reduced to Fact \ref{fact:22030921}
 as we have seen in the proof of Theorem \ref{thm:Joseph}.  
So it suffices to treat the case
 where ${\mathfrak{g}}$ is absolutely simple.

\begin{proof}[Proof of Theorem \ref{thm:22040624}]
We set $L:=M \exp ({\mathbb{R}}A_{\mu})$, 
 and 
\[
   S:=\operatorname{Ad}(L)X
\subset {\mathbb{O}}_{\operatorname{min}, {\mathbb{R}}}
=\operatorname{Ad}(G)X.  
\]
Then $\operatorname{Ad}(G^{\sigma}) S$ is open in ${\mathbb{O}}_{\operatorname{min}, {\mathbb{R}}}$
 by Lemma \ref{lem:22040623}.  
We now verify the condition of Lemma \ref{lem:1.7}:
\begin{equation}
\label{eqn:hZX}
({\mathfrak{g}}^{\sigma}+{\mathfrak{Z}}_{\mathfrak{g}}(W))^{\perp}\subset 
[W, {\mathfrak{g}}^{\sigma}]
\qquad
\text{for all $W \in S$.}
\end{equation}
Since $S \subset {\mathfrak{g}}({\mathfrak{a}};\mu)$, 
 it suffices
 to show \eqref{eqn:hZX}
 for $W=X$.  
By the second equality in Lemma \ref{lem:22040623}, 
 one has 
$
({\mathfrak{g}}^{\sigma}+{\mathfrak{Z}}_{\mathfrak{g}}(X))^{\perp}
\subset
[X, {\mathfrak{g}}_{-2}].  
$
Since 
$\sigma({\mathfrak{g}}_{-2})={\mathfrak{g}}_2$ is abelian, 
 one has
\[
[X, {\mathfrak{g}}_{-2}]
=\{[X, V+\sigma(V)]: V \in {\mathfrak{g}}_{-2}\}
\subset [X, {\mathfrak{g}}^{\sigma}].  
\]
Thus \eqref{eqn:hZX} is shown.  
\end{proof}

\begin{proof}
[Proof of Theorem \ref{thm:22100105}]
The coadjoint orbit
 ${\mathbb{O}}_{\operatorname{min}, {\mathbb{R}}}^{\mathbb{C}}$
 is of the form 
$
   {\mathbb{O}}_{\operatorname{min}, {\mathbb{R}}}^{\mathbb{C}}
   =
   \operatorname{Ad}(G_{\mathbb{C}})X
\simeq
G_{\mathbb{C}}/(G_{\mathbb{C}})_X
$
 for any non-zero $X \in {\mathfrak{g}}({\mathfrak{a}};\mu)$
 via the identification ${\mathfrak{g}}_{\mathbb{C}}^{\ast} \simeq {\mathfrak{g}}_{\mathbb{C}}$. 
We take $Y \in {\mathfrak{g}}({\mathfrak{a}};-\mu)$
 such that $\{X, A_{\mu}, Y \}$ forms an ${\mathfrak{s l}}_2$-triple
 as before.   
Since ${\mathbb{O}}_{\operatorname{min}, {\mathbb{R}}}^{\mathbb{C}}$ contains $Y$, 
 one can also write as 
 ${\mathbb{O}}_{\operatorname{min}, {\mathbb{R}}}^{\mathbb{C}} = \operatorname{Ad}(G_{\mathbb{C}})Y \simeq G_{\mathbb{C}}/(G_{\mathbb{C}})_Y$.  
Then Lemma \ref{lem:22040704} implies that 
\begin{equation}
\label{eqn:Zglmd}
  {\mathfrak{g}}
  =
  {\mathfrak{Z}}_{\mathfrak{g}}(Y)
  +({\mathfrak{m}}+{\mathbb{R}}A_{\mu})
  +{\mathfrak{Z}}_{\mathfrak{g}}(X).  
\end{equation}

We take any nonzero $Y' \in {\mathfrak{g}}({\mathfrak{a}};-\mu)$.  
We claim
\begin{equation}
\label{eqn:XYprime}
(\operatorname{diag}{\mathfrak{g}}
  + {\mathfrak{Z}}_{{\mathfrak{g}} \oplus {\mathfrak{g}}} (X,Y'))^{\perp}
 \subset 
[(X,Y'), \operatorname{diag} {\mathfrak{g}}].  
\end{equation}
In fact, 
 by using the decomposition \eqref{eqn:g012} via the ${\mathfrak{s l}}_2$-triple
 $\{X, A_{\mu}, Y\}$, 
 one has 
 ${\mathfrak{Z}}_{\mathfrak{g}}(Y') \supset {\mathfrak{g}}_{-1} \oplus {\mathfrak{g}}_{-2}$, 
hence
\[
 ({\mathfrak{Z}}_{{\mathfrak{g}}}(X) + {\mathfrak{Z}}_{{\mathfrak{g}}}(Y'))^{\perp}
\subset
 ({\mathfrak{Z}}_{{\mathfrak{g}}}(X) \oplus {\mathfrak{g}}_{-1} \oplus {\mathfrak{g}}_{-2})^{\perp}
=[X, {\mathfrak{g}}_{-2}]
=[X, {\mathfrak{g}}({\mathfrak{a}};-\mu)]
\]
 by the representation theory of ${\mathfrak{s l}}_2^X$.  
Switching the role of $X$ and $Y'$, 
 one sees
\[
({\mathfrak{Z}}_{{\mathfrak{g}}}(X) + {\mathfrak{Z}}_{{\mathfrak{g}}}(Y'))^{\perp}\subset [Y', {\mathfrak{g}}({\mathfrak{a}};\mu)].  
\]
Hence the left-hand side of \eqref{eqn:XYprime} is contained
 in 
\[
\{(Z,-Z): Z \in [X,{\mathfrak{g}}({\mathfrak{a}};-\mu)] \cap [Y', {\mathfrak{g}}({\mathfrak{a}};\mu)]\}, 
\]
 which is a subspace 
 of $[(X,Y'), \operatorname{diag} {\mathfrak{g}}]$
 because both ${\mathfrak{g}}({\mathfrak{a}};\mu)$ and ${\mathfrak{g}}({\mathfrak{a}};-\mu)$ are abelian.  
Hence \eqref{eqn:XYprime} is shown.

We set $L_{\mathbb{C}}:=M_{\mathbb{C}} \exp({\mathbb{C}}A_{\mu})$
 and $S:=\{(\operatorname{Ad}(\ell)X, \operatorname{Ad}(\ell^{-1})Y):\ell \in L_{\mathbb{C}}\}$.  
By \eqref{eqn:Zglmd}, 
 $\operatorname{diag}(G_{\mathbb{C}}) S$ is open dense
 in ${\mathbb{O}}_{\operatorname{min}, {\mathbb{R}}}^{\mathbb{C}} \times {\mathbb{O}}_{\operatorname{min}, {\mathbb{R}}}^{\mathbb{C}}$ 
 in light of the identification 
 $\operatorname{diag} (G_{\mathbb{C}}) \backslash (G_{\mathbb{C}} \times G_{\mathbb{C}}) \simeq  G_{\mathbb{C}}$, 
 $(x,y) \mapsto x^{-1} y$.  

Similarly to \eqref{eqn:XYprime}, 
 one obtains the following inclusion:
\begin{equation*}
   (\operatorname{diag}({\mathfrak{g}}_{\mathbb{C}})
    +
    {\mathfrak{Z}}_{{\mathfrak{g}}_{\mathbb{C}}\oplus {\mathfrak{g}}_{\mathbb{C}}}
    (\operatorname{Ad}(\ell)X,\operatorname{Ad}(\ell^{-1})Y))^{\perp}
   \subset
   [(\operatorname{Ad}(\ell)X,\operatorname{Ad}(\ell^{-1})Y), \operatorname{diag}({\mathfrak{g}}_{\mathbb{C}})]
\end{equation*}
for any $\ell \in L_{\mathbb{C}}$.  
Thus Theorem \ref{thm:22100105} follows from Lemma \ref{lem:1.7}.  
\end{proof}

\subsection{Singular representations
 of $Sp(p,q)$ and $F_{4(-20)}$}
\label{subsec:smallf4}
In this section, 
 we verify 
 the existence of $\Pi \in \operatorname{Irr}(G)$
 satisfying
 $\operatorname{DIM}(\Pi)=m({\mathfrak{g}})$
 for ${\mathfrak{g}}={\mathfrak{s p}}(p,q)$
 or ${\mathfrak{f}}_{4(-20)}$.  
Actually, 
 one can take $\Pi$ 
 to be the globalization 
 of Zuckerman's module $A_{\mathfrak{q}}(\lambda)$, 
 a cohomological parabolic induction 
 for some $\theta$-stable parabolic subalgebra
 ${\mathfrak{q}}$ in ${\mathfrak{g}}_{\mathbb{C}}$.

In what follows, 
 we write ${\mathfrak{q}}={\mathfrak{l}}_{\mathbb{C}}+{\mathfrak{u}}$
 for the Levi decomposition
 of a $\theta$-stable parabolic subalgebra ${\mathfrak{q}}$
 of ${\mathfrak{g}}_{\mathbb{C}}={\mathfrak{k}}_{\mathbb{C}}+{\mathfrak{p}}_{\mathbb{C}}$, 
 where ${\mathfrak{l}}_{\mathbb{C}}$ is the complexified Lie algebra
 of $L=N_G({\mathfrak{q}})$, 
 the normalizer of ${\mathfrak{q}}$ in $G$.  
Then the Gelfand--Kirillov dimension of $A_{\mathfrak{q}}(\lambda)$
 is the complex dimension 
 of $\operatorname{Ad}(K_{\mathbb{C}})({\mathfrak{u}} \cap {\mathfrak{p}}_{\mathbb{C}})$, 
 see {\it{e.g., }} \cite{xkInvent98}.

\begin{lemma}
\label{lem:22092502}
Let $G=Sp(p,q)$, 
 and ${\mathfrak{q}}$ be a $\theta$-stable parabolic subalgebra
 with $L \simeq {\mathbb{T}} \times Sp(p-1,q)$
 or $Sp(p,q-1) \times {\mathbb{T}}$.  
Then $\operatorname{DIM}(A_{\mathfrak{q}}(\lambda))=2(p+q)-1$.  
\end{lemma}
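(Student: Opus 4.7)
The plan is to apply the formula, quoted just before Lemma \ref{lem:22092502}, that $\operatorname{DIM}(A_{\mathfrak{q}}(\lambda))$ equals the complex dimension of the $K_{\mathbb{C}}$-saturation $\operatorname{Ad}(K_{\mathbb{C}})(\mathfrak{u}\cap\mathfrak{p}_{\mathbb{C}})$, and then carry out this dimension count explicitly for the two prescribed Levi subgroups.

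First I would fix the realization: $\mathfrak{g}_{\mathbb{C}}=\mathfrak{sp}(p+q,\mathbb{C})$, $\mathfrak{k}_{\mathbb{C}}\simeq \mathfrak{sp}(p,\mathbb{C})\oplus\mathfrak{sp}(q,\mathbb{C})$, and identify the $K_{\mathbb{C}}$-module $\mathfrak{p}_{\mathbb{C}}$ with the outer tensor product $V\boxtimes W$, where $V=\mathbb{C}^{2p}$ and $W=\mathbb{C}^{2q}$ are the standard representations of the two factors. As a sanity check, $\dim_{\mathbb{C}}\mathfrak{p}_{\mathbb{C}}=4pq$ matches $\dim_{\mathbb{C}}\mathfrak{sp}(p+q,\mathbb{C})-\dim_{\mathbb{C}}\mathfrak{k}_{\mathbb{C}}$.

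Next, for $L\simeq\mathbb{T}\times Sp(p-1,q)$ I would select an element $H\in\sqrt{-1}\,\mathfrak{k}$ acting on $V$ by the eigenvalues $(+1,0,\ldots,0,-1)$ (with $0$ of multiplicity $2p-2$, on a symplectic hyperplane) and by $0$ on $W$. The centralizer $\mathfrak{Z}_{\mathfrak{g}_{\mathbb{C}}}(H)=\mathfrak{l}_{\mathbb{C}}$ then decomposes as $\mathbb{C}H\oplus\mathfrak{sp}(p+q-1,\mathbb{C})$, and its intersection with $\mathfrak{g}=\mathfrak{sp}(p,q)$ is the real Levi $L=\mathbb{T}\times Sp(p-1,q)$ as required. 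Defining $\mathfrak{u}$ as the sum of positive eigenspaces of $\operatorname{ad}(H)$, a direct eigenvalue bookkeeping on $V\boxtimes W$ (the only eigenvalues being $\pm1,0$) gives
\[
\mathfrak{u}\cap\mathfrak{p}_{\mathbb{C}}\;=\;\mathbb{C}e_{1}\otimes W\;\subset\;V\otimes W,
\]
a subspace of complex dimension $2q$, where $e_{1}\in V$ is the $(+1)$-eigenvector.

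Then I would compute the $K_{\mathbb{C}}$-saturation. Since $Sp(p,\mathbb{C})$ acts transitively on $V\setminus\{0\}$, one has
\[
\operatorname{Ad}(K_{\mathbb{C}})(\mathfrak{u}\cap\mathfrak{p}_{\mathbb{C}})\;=\;\{\,v\otimes w : v\in V,\ w\in W\,\},
\]
which is the affine cone over the Segre variety of rank-one tensors in $V\otimes W$; this is an irreducible subvariety of dimension $\dim_{\mathbb{C}}V+\dim_{\mathbb{C}}W-1=2p+2q-1$. Hence $\operatorname{DIM}(A_{\mathfrak{q}}(\lambda))=2(p+q)-1$. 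The case $L\simeq Sp(p,q-1)\times\mathbb{T}$ is handled by the symmetric argument interchanging the roles of $V$ and $W$, producing $\mathfrak{u}\cap\mathfrak{p}_{\mathbb{C}}=V\otimes\mathbb{C}f_{1}$ of dimension $2p$, whose $K_{\mathbb{C}}$-saturation is the same rank-one cone of dimension $2p+2q-1$.

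No substantive obstacle is anticipated: the rank-one cone is irreducible so its closure contributes no extra components, and the verification amounts to an elementary eigenvalue count. The only point requiring minor care is matching the prescribed real form of $L$ with the chosen element $H$, which is immediate once one recognizes that the zero eigenspace of $\operatorname{ad}(H)$ on $\mathfrak{p}_{\mathbb{C}}$ is $\mathbb{C}^{2(p-1)}\otimes W$, the natural noncompact part of $\mathfrak{sp}(p-1,q)_{\mathbb{C}}$.
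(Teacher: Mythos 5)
Your proposal is correct and follows essentially the same route as the paper: both identify $\mathfrak{p}_{\mathbb{C}}$ with $M(2p,2q;\mathbb{C})\simeq \mathbb{C}^{2p}\otimes\mathbb{C}^{2q}$ and recognize $\operatorname{Ad}(K_{\mathbb{C}})(\mathfrak{u}\cap\mathfrak{p}_{\mathbb{C}})$ inside the rank-one cone of dimension $2(p+q)-1$. The only (harmless) difference is in the last step: you compute the saturation exactly via transitivity of $Sp(p,\mathbb{C})$ on $\mathbb{C}^{2p}\setminus\{0\}$, whereas the paper contents itself with the upper bound $\operatorname{DIM}(A_{\mathfrak{q}}(\lambda))\le 2(p+q)-1$ and closes the gap with the general lower bound $\operatorname{DIM}(\Pi)\ge m(\mathfrak{g})=2(p+q)-1$ from \eqref{eqn:GKbdd}.
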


\begin{proof}
We identify ${\mathfrak{p}}_{\mathbb{C}} \simeq M(2p,2q;{\mathbb{C}})$.  
Then $\operatorname{Ad}(K_{\mathbb{C}})({\mathfrak{u}} \cap {\mathfrak{p}}_{\mathbb{C}})$
 is contained in the variety
 of rank one matrices for the above parabolic subalgebra ${\mathfrak{q}}$, 
 which is of complex dimension
 $2(p+q)-1$.  
Hence $\operatorname{DIM}(A_{\mathfrak{q}}(\lambda)) \le 2(p+q)-1$.  
Since $\operatorname{DIM}(A_{\mathfrak{q}}(\lambda)) \ge 
 m({\mathfrak{g}}) = 2(p+q)-1$, 
 we obtain the desired equality.  
\end{proof}

\begin{lemma}
\label{lem:22093002}
Let $G=F_{4(-20)}$, 
 and ${\mathfrak{q}}$ be one of $\theta$-stable parabolic subalgebras of ${\mathfrak{g}}_{\mathbb{C}}$
 in {\rm{\cite[Table C.4]{decoAq}}}.  
Then $\operatorname{DIM}(A_{\mathfrak{q}}(\lambda))=11$.  
\end{lemma}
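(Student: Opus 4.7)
My plan is to mirror the proof of Lemma~\ref{lem:22092502}: use the standard formula
$\operatorname{DIM}(A_{\mathfrak{q}}(\lambda)) = \dim_{\mathbb{C}} \operatorname{Ad}(K_{\mathbb{C}})({\mathfrak{u}} \cap {\mathfrak{p}}_{\mathbb{C}})$
recalled at the opening of Section~\ref{subsec:smallf4}, and sandwich the right-hand side between $11$ from above and below. The lower bound is free: for each of the finitely many ${\mathfrak{q}}$'s listed in \cite[Table C.4]{decoAq} the Levi $L$ is a proper subgroup of $G$, so $A_{\mathfrak{q}}(\lambda)$ is infinite-dimensional, and then the general inequality \eqref{eqn:GKbdd} gives $\operatorname{DIM}(A_{\mathfrak{q}}(\lambda)) \ge m({\mathfrak{f}}_{4(-20)}) = 11$.

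The content is in the matching upper bound, which I would obtain by exploiting the very special structure of the Cartan decomposition of ${\mathfrak{f}}_{4(-20)}$: ${\mathfrak{k}}_{\mathbb{C}} \simeq {\mathfrak{so}}(9,{\mathbb{C}})$ acts on ${\mathfrak{p}}_{\mathbb{C}} \simeq {\mathbb{C}}^{16}$ as the irreducible spinor representation, and the nonzero nilpotent $K_{\mathbb{C}}$-orbit of smallest dimension in ${\mathfrak{p}}_{\mathbb{C}}$ is the (affine) cone over the $10$-dimensional spinor variety in ${\mathbb{P}}^{15}$, of complex dimension~$11$; by the Kostant--Rallis--Sekiguchi correspondence this orbit coincides with ${\mathbb{O}}_{\operatorname{min},{\mathbb{R}}}^{\mathbb{C}} \cap {\mathfrak{p}}_{\mathbb{C}}$. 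Since ${\mathfrak{u}}$ is a nilpotent radical, every element of ${\mathfrak{u}} \cap {\mathfrak{p}}_{\mathbb{C}}$ is nilpotent in ${\mathfrak{g}}_{\mathbb{C}}$; I would then verify for each listed ${\mathfrak{q}}$ that a generic vector of ${\mathfrak{u}} \cap {\mathfrak{p}}_{\mathbb{C}}$ lands in this $11$-dimensional minimal $K_{\mathbb{C}}$-orbit, giving the upper bound $\dim_{\mathbb{C}} \operatorname{Ad}(K_{\mathbb{C}})({\mathfrak{u}} \cap {\mathfrak{p}}_{\mathbb{C}}) \le 11$.

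The main obstacle is this final verification: for each ${\mathfrak{q}}$ one must rule out that a generic element of ${\mathfrak{u}} \cap {\mathfrak{p}}_{\mathbb{C}}$ lies in a $K_{\mathbb{C}}$-orbit strictly larger than the minimal one. The cleanest way is to read off the $(L \cap K)_{\mathbb{C}}$-module structure of ${\mathfrak{u}} \cap {\mathfrak{p}}_{\mathbb{C}}$ from the explicit description of ${\mathfrak{q}}$ in \cite[Table C.4]{decoAq} and to check by a weight computation that every vector therein lies on the cone over the spinor variety of $\operatorname{Spin}(9,{\mathbb{C}})$, i.e., in the closure of the $11$-dimensional orbit through a highest weight vector of the spin module. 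Alternatively, since the nilpotent $K_{\mathbb{C}}$-orbits in ${\mathfrak{p}}_{\mathbb{C}}$ for ${\mathfrak{f}}_{4(-20)}$ are few in number and of known dimensions, one may eliminate the larger orbits by comparing their dimensions to $\dim_{\mathbb{C}}({\mathfrak{u}} \cap {\mathfrak{p}}_{\mathbb{C}})$ itself; this reduces the matter to a short finite check parallel to the rank-one argument used in the proof of Lemma~\ref{lem:22092502}.
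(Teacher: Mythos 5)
Your strategy is genuinely different from the paper's: in effect you transplant the paper's own proof of Lemma~\ref{lem:22092502} (the $Sp(p,q)$ case) to $F_{4(-20)}$, namely compute $\operatorname{DIM}(A_{\mathfrak{q}}(\lambda))=\dim_{\mathbb{C}}\operatorname{Ad}(K_{\mathbb{C}})({\mathfrak{u}}\cap{\mathfrak{p}}_{\mathbb{C}})$, bound it above by the dimension of a small $K_{\mathbb{C}}$-stable cone in ${\mathfrak{p}}_{\mathbb{C}}$, and bound it below by $m({\mathfrak{g}})=11$ via \eqref{eqn:GKbdd}. The paper argues differently for $F_{4(-20)}$: since ${\mathfrak{f}}_{4(-20)}$ has exactly three real nilpotent coadjoint orbits, of dimensions $0$, $22$, $30$, one has $\operatorname{DIM}(\Pi)\in\{0,11,15\}$ for every $\Pi\in\operatorname{Irr}(G)$; the containment $\operatorname{AS}_K(A_{\mathfrak{q}}(\lambda))\subset{\mathbb{R}}_+\langle{\mathfrak{u}}\cap{\mathfrak{p}}_{\mathbb{C}}\rangle$, the computation of these supports in \cite{decoAq}, and \cite[Prop.\ 2.6]{K21Kostant} then exclude the value $15$, leaving $11$. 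Your geometric picture (${\mathfrak{k}}_{\mathbb{C}}\simeq{\mathfrak{so}}(9,{\mathbb{C}})$ acting on ${\mathfrak{p}}_{\mathbb{C}}\simeq{\mathbb{C}}^{16}$ by the spin representation, with the $11$-dimensional cone over the spinor variety corresponding to the $22$-dimensional real minimal orbit) is accurate, and your lower bound is exactly the one the paper uses in Lemma~\ref{lem:22092502}.

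The gap is in your upper bound, which is where the entire content of the lemma lies. Containment of the linear subspace ${\mathfrak{u}}\cap{\mathfrak{p}}_{\mathbb{C}}$ in the cone over the spinor variety is a genuine quadratic condition (that cone is not a linear subspace of ${\mathbb{C}}^{16}$), and it must be checked against the explicit data of \cite[Table C.4]{decoAq}; you say you ``would verify'' this but do not carry it out. Moreover, your proposed fallback of comparing $\dim_{\mathbb{C}}({\mathfrak{u}}\cap{\mathfrak{p}}_{\mathbb{C}})$ with the list of orbit dimensions cannot close the gap on its own: a linear subspace of small dimension can still meet the open $15$-dimensional $K_{\mathbb{C}}$-orbit --- already a generic line in the null quadric of ${\mathfrak{p}}_{\mathbb{C}}$ does --- and then $\operatorname{Ad}(K_{\mathbb{C}})({\mathfrak{u}}\cap{\mathfrak{p}}_{\mathbb{C}})$ is dense in the full $15$-dimensional nilpotent cone and $\operatorname{DIM}=15$. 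So to complete your route you must actually perform the weight-space computation for each ${\mathfrak{q}}$ in the table showing ${\mathfrak{u}}\cap{\mathfrak{p}}_{\mathbb{C}}$ lies on the spinor cone; otherwise fall back on the paper's asymptotic-$K$-support argument for the upper bound.
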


\begin{proof}
For ${\mathfrak{g}}={\mathfrak{f}}_{4(-20)}$, 
 there are three real nilpotent coadjoint orbits, 
 and their dimensions are 0, 22, 30.  
This implies
 that $\operatorname{DIM}(\Pi)\in \{0, 11, 15\}$
for any $\Pi \in \operatorname{Irr}(G)$.

The asymptotic $K$-support
 (Section \ref{subsec:adm})
 of $A_{\mathfrak{q}}(\lambda)$ 
 has the following upper estimate
 $\operatorname{AS}_K(A_{\mathfrak{q}}(\lambda))
 \subset {\mathbb{R}}_+ \langle {\mathfrak{u}} \cap {\mathfrak{p}}_{\mathbb{C}} \rangle$, 
 see \cite[Ex.\ 3.2]{xkAnn98}
 and the notation therein.

As we saw in \cite{decoAq}, 
 the asymptotic $K$-support $\operatorname{AS}_K(A_{\mathfrak{q}}(\lambda))$
 for the parabolic subalgebra ${\mathfrak{q}}$ 
 under consideration is strictly smaller
 than that of a principal series representation of $G$.  
In turn, 
 by \cite[Prop.\ 2.6]{K21Kostant}, 
 we conclude that $\operatorname{DIM}(A_{\mathfrak{q}}(\lambda))<m({\mathfrak{g}})=15$.  
Hence $\operatorname{DIM}(A_{\mathfrak{q}}(\lambda))=11$.  
\end{proof}

\begin{remark}
\label{rem:22092511}
The Gelfand--Kirillov dimensions
 of irreducible representations
 are known
 for the group of real rank one.  
In particular, 
 one may observe from \cite[Fig.\ 8.16]{collingwood85}
 that 
$
  \operatorname{DIM} \colon \operatorname{Irr}(F_{4(-20)}) \to \{0,11,15\}
$
is surjective.  
\end{remark}

\begin{proof}
[Proof of Theorems \ref{thm:mbrest} and \ref{thm:tensor}]
We have shown 
 at the end of Section \ref{sec:min}
 that the remaining cases are ${\mathfrak{g}}={\mathfrak{sp}}(p,q)$
 or ${\mathfrak{f}}_{4(-20)}$.  
For these Lie algebras, 
 the assumption $\sigma \mu=-\mu$ in Theorem \ref{thm:mgbdd}
 is satisfied
 (see Example \ref{ex:6.2}).  
On the other hand, 
 we also have verified
 that there exists $\Pi \in \operatorname{Irr}(G)$
 with $\operatorname{DIM}(\Pi)=m({\mathfrak{g}})$
 for these Lie algebras ${\mathfrak{g}}$.  
Hence Theorem \ref{thm:mgbdd} covers the remaining cases, 
 and completes the proof of Theorems \ref{thm:mbrest} and \ref{thm:tensor}.  
\end{proof}

\vskip 1pc
{\bf{$\langle$Acknowledgement$\rangle$}}
This work was partially supported
 by Grant-in-Aid for Scientific Research (A) (18H03669), 
JSPS.  
The author would like to thank the anonymous referee
 for making helpful comments.  
Thanks are also due to Toshihisa Kubo 
 for reading carefully the first draft of this paper.

\end{document}